\newtheorem{theorem}[equation]{Theorem}
\newtheorem*{theorem*}{Theorem}
\newtheorem{lemma}[equation]{Lemma}
\newtheorem{corollary}[equation]{Corollary}
\newtheorem{definition}[equation]{Definition}
\newtheorem{example}[equation]{Example}
\newtheorem{proposition}[equation]{Proposition}
\theoremstyle{remark}
\newtheorem{remark}[equation]{Remark}
\numberwithin{equation}{section}
\newcommand{\C}{\mathbb C}
\newcommand{\Q}{\mathbb Q}
\newcommand{\R}{\mathbb R}
\newcommand{\Z}{\mathbb Z}
\newcommand{\A}{\mathbb A}
\renewcommand{\a}{\mathfrak a}
\newcommand{\F}{{\mathcal F}}
\newcommand{\cP}{{\mathcal P}}
\newcommand{\cO}{{\mathcal O}}
\newcommand{\cV}{{\mathcal V}}
\newcommand{\hC}{\widehat{C}}
\newcommand{\SL}{\mathrm{SL}}
\newcommand{\U}{\mathrm{U}}
\newcommand{\Log}{\operatorname{Log}}
\newcommand{\vol}{\operatorname{vol}}
\newcommand{\Spec}{\operatorname{Spec}}
\newcommand{\Span}{\operatorname{Span}}
\newcommand{\Hom}{\operatorname{Hom}}
\newcommand{\codim}{\operatorname{codim}}
 \newcommand{\adots}{\mathinner{\mkern1mu\raise1pt\hbox{.} 
\mkern2mu\raise4pt\hbox{.} 
\mkern2mu\raise7pt\hbox{.}\mkern1mu}}
\begin{document}

\title[Combinatorics of Trace Formula]{On Combinatorics of the Arthur Trace Formula, Convex Polytopes, and Toric Varieties}

\author[M. Asgari \& K. Kaveh]{Mahdi Asgari \ \ and \ \  Kiumars Kaveh}
\address{Oklahoma State University\\ 
Department of Mathematics \\
Stillwater, OK \\
USA}
\email{asgari@math.okstate.edu}

\address{Department of Mathematics, University of Pittsburgh,
Pittsburgh, PA, USA}
\email{kaveh@pitt.edu}

\begin{abstract}
We explicate the combinatorial/geometric ingredients of Arthur's proof of the convergence and polynomiality, in a truncation parameter, 
of his non-invariant trace formula.  Starting with a fan in a real, finite dimensional, vector space and a collection of functions, one for 
each cone in the fan, we introduce a combinatorial truncated function with respect to a polytope normal to the fan and prove the 
analogues of Arthur's results on the convergence and polynomiality of the integral of this truncated function over the vector space.  
The convergence statements clarify the important role of certain combinatorial subsets that appear in Arthur's work and provide 
a crucial partition that amounts to a so-called nearest face partition.  
The polynomiality statements can be thought of as far reaching extensions of the Ehrhart polynomial. Our proof of polynomiality relies on 
the Lawrence-Varchenko conical decomposition and readily implies an extension of the well-known combinatorial lemma of Langlands. 
The Khovanskii-Pukhlikov virtual polytopes are an important ingredient here. 
Finally, we give some geometric interpretations of our combinatorial truncation on toric varieties as a measure and a Lefschetz number.  
\end{abstract}

\maketitle

\section{Introduction} 
The Arthur Trace Formula is a vast generalization of the Selberg Trace Formula to arbitrary rank reductive groups.  
The first incarnation of Arthur's trace formula, the non-invariant trace formula, relies on two crucial ingredients: the integral of a {\it truncated} 
kernel (of a compactly supported test function) is absolutely convergent and the integral depends polynomially on the truncation 
parameter (which he has to assume is sufficiently regular). 
The purpose of this work is to prove two general, purely combinatorial, statements about polytopes, 
one on convergence and the other on polynomiality of certain integrals. 
These statements essentially capture, 
and generalize, the combinatorial aspects of Arthur's corresponding results (cf. \cite{arthur-duke} and \cite{arthur-annals}), 
isolating them from the analytic aspects that use reduction theory and other techniques.  
The long term hope for our project, of which this work is a first step, is to aim at applications of the 
Arthur Trace Formula to more general test functions \cite{Hoffmann,Finis-Lapid,Finis-Lapid-Mueller,Finis-Lapid-Vietnam}.

We also give interpretations of our combinatorial results in terms of the geometry of toric varieties. 
We hope the present paper would shed light on the combinatorics behind Arthur's trace formula and 
its similarity with certain concepts appearing in toric geometry.  The connection between polyhedral combinatorics 
appearing in Arthur’s trace formula and in toric varieties is not quite transparent yet. In this regard  
we mention the articles of Kottwitz \cite{Kottwitz} and Finis-Lapid \cite{Finis-Lapid} which may be relevant.

We now briefly recall the trace formula before explaining a summary of our results and proofs.

\subsection*{Arthur's non-invariant trace formula} 
For a finite group $G$ the character of a representation of $G$ (or any conjugation invariant function on $G$ for that matter) 
can be written uniquely as a linear combination of characteristic functions of different conjugacy classes, as well as, 
a linear combination of traces of irreducible representations. The equality of these two decompositions is a special case of the 
Frobenius Reciprocity which  plays an important role in representation theory of finite groups. 
This is the prototype of many trace formulas in representation theory.

Arthur gave a far reaching trace formula for arbitrary reductive groups defined over number fields.  
A main problem is that in this generality the integral representing the trace diverges. 
Arthur introduces an operation of {\it truncation} to modify this integral so that it becomes convergent.

The (non-invariant) Arthur Trace Formula (ATF) is an equality of two distributions 
\begin{equation}\label{atf} 
J_{\mathrm{geom}}(f) = J_{\mathrm{spec}}(f), \quad f \in C_c^\infty(G(\A)^1).   
\end{equation} 
Here $G$ is a connected reductive linear algebraic group defined over $\Q$ (or any number field) 
whose ring of adeles we denote by $\A$ 
and $G(\A)^1$ consists of those $x \in G(\A)$ satisfying $|\chi(x)|_\A = 1$ for all rational characters $\chi$ of $G$. 
Both the {\it geometric} and the {\it spectral} distributions on the two sides of \eqref{atf} are equal to the integral 
over $G(\Q) \backslash G(\A)^1$ of a {\it modified kernel} $k^T(x) = k^T(x,f)$ at a certain value $T=T_0$ of a 
suitably regular {\it truncation parameter} $T$ belonging to the positive Weyl chamber of $G$ with respect to a fixed 
minimal parabolic subgroup.  The space $G(\Q) \backslash G(\A)^1$ is in general finite volume (with respect to the 
Haar measure on $G$), but only compact 
when $G$ has no proper parabolic subgroups.  While the trace formula in the case of compact quotient was well understood, 
already the development of the trace formula in the case of $\SL(2,\Z) \backslash \SL(2,\R)$ led A. Selberg 
to his celebrated Selberg Trace Formula.  However, Arthur realized that the presence of proper parabolic subgroups 
in a more general group $G$ makes the integral of the kernel function divergent. As a result, he introduced the 
modified kernel $k^T(x)$.  Two major properties of the modified kernel (see \cite{arthur-duke} and \cite{arthur-annals}) are the following:  
\begin{itemize} 
\item[(1)] 
$ 
\int\limits_{G(\Q) \backslash G(\A)^1} |k^T(x)| \, dx < \infty 
$ 
for suitably regular truncation parameter $T$.     

\item[(2)] The function 
$ T \mapsto J^T(f) = \int\limits_{G(\Q) \backslash G(\A)^1} k^T(x) \, dx $ 
is a polynomial function of $T$. 
\end{itemize} 
As the truncation parameter $T$ goes further away from the origin, the integral of $k^T(x)$ gets closer to the (divergent) integral 
representing the trace. Among other things, the proofs involve quite intricate combinatorics of convex polytopes and convex cones.  
Expanding the modified kernel geometrically (via conjugacy classes) and spectrally (via automorphic representations) then 
provides the two sides of (the truncated analog of) the identity \eqref{atf}.

In the function field case one also has an analogue of the ATF and the truncation parameter $T$.  In particular, we mention the work of 
G. Laumon \cite{laumon-vol1,laumon-vol2} where he develops the trace formula for certain class of test functions for which the modified kernel 
$k^T(\cdot)$ turns out to be equal to the usual kernel $k(\cdot)$.  This makes the question of polynomiality obvious since the resulting 
polynomials would simply be constant. However, the convergence question still remains and indeed a similar argument as Arthur's 
in the number field case applies.

\subsection*{Main results} 
We introduce a notion of \emph{combinatorial truncation} and prove two main results on its
convergence and polynomiality.  The idea for our results is to start with a complex-valued function on a finite dimensional 
real vector space whose integral over the vector space is possibly divergent.  We then ``truncate'' this function by 
subtracting some other functions around some neighborhoods of infinity to arrive at a ``truncated function'' whose 
integral over the vector space is absolutely convergent.  The ``neighborhoods of infinity'' are with respect to a 
toric compactification of $V$ (in the sense of Sections \ref{subsec-toric-background} and \ref{subsec-toric-positive}) 
whose data is encoded in a polytope and its normal fan.  We then prove that the integral of the truncated function, 
as a function of the polytope, is indeed a polynomial function.

To explain our results we introduce some notation and refer to Sections \ref{subsec-fan} and \ref{subsec-polytope} 
for further details on convex cones and polytopes.  We first explain our convergence results.

Let $V \cong \R^n$ be an $n$-dimensional real vector space.  
We fix an inner product $\langle \cdot, \cdot \rangle$ on $V$ and use it to identify $V$ with its dual $V^*$.  
Fix a full dimensional, complete, simplicial fan $\Sigma$ in $V$ and fix a polytope $\Delta \in \cP(\Sigma)$, the set of polytopes 
with normal fan $\Sigma$.  There is a one-to-one correspondence between the cones in $\Sigma$ and the faces of $\Delta$. 
For $\sigma \in \Sigma$, we let $T^-_{\Delta, \sigma}$ denote the outward looking tangent cone of $\Delta$ 
at the face corresponding to $\sigma$ (see Section \ref{subsec-polytope} and Figures \ref{fig-TC1}--\ref{fig-TC2}.)

Suppose a function $K_0: V \to \C$ is given with $\int\limits_V K_0(x) \, dx$ possibly divergent.  In fact, let $K_0$ be a member 
of a collection of functions $K_\sigma: V \to \C$, one for each $\sigma \in \Sigma$.  
We will assume that $K_\sigma$ is invariant in the direction of $\Span(\sigma)$, i.e., $K_\sigma(x+y) = K_\sigma(x)$ 
for $x \in V$ and $y \in \Span(\sigma)$.

Associated with the collection 
$(K_\sigma)_{\sigma \in \Sigma}$ and the polytope $\Delta$ we define the \emph{truncated function} 
\begin{equation}   \label{equ-intro-k-Delta}
k_\Delta(x) = \sum\limits_{\sigma \in \Sigma} (-1)^{\dim \sigma} K_{\sigma}(x) {\mathbf 1}_{T^-_{\Delta, \sigma}(x)},
\end{equation}
where ${\mathbf 1}$ denotes the characteristic function. 
We think of $k_\Delta(x)$ as a ``truncation'' of $K_0$ by means of the polytope $\Delta$ and the functions 
$K_\sigma$ for non-zero cones $\sigma \in \Sigma$. 

Note that the function $k_{\Delta}(x)$ and $K_0(x)$ coincide for $x \in \Delta$. In fact, if all the $K_\sigma$ 
are identically equal to $1$, by the classical Brianchon-Gram theorem (cf. Theorem \ref{th-BG}), 
the function $k_\Delta(x)$ coincides with the characteristic function of $\Delta$ (see page \pageref{simple-example}, Another Simple Example). 

\begin{figure}
\includegraphics[height=5cm]{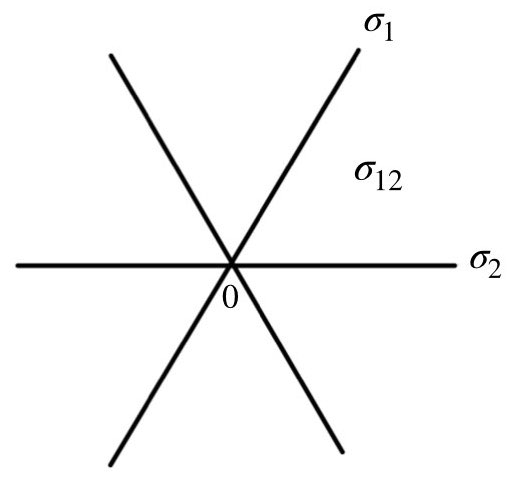}
\includegraphics[height=7cm]{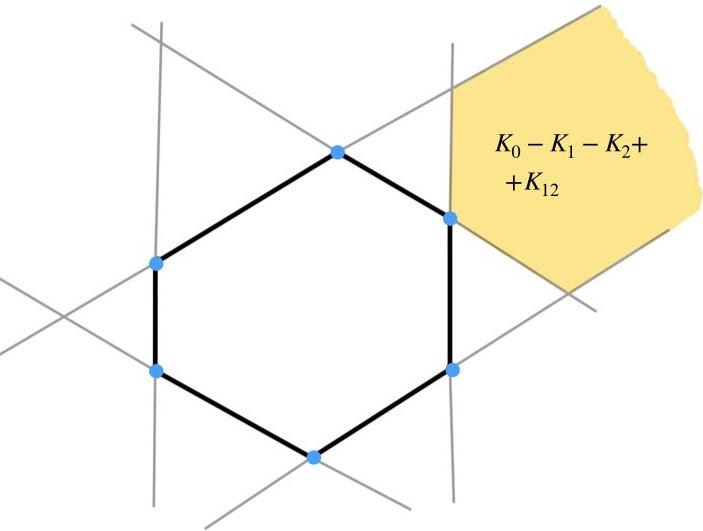}
\caption{Left: a complete simplicial fan in $V=\R^2$; we have labeled three cones in the fan. Right: a polygon normal to the fan and 
regions obtained by drawing the outward face cones; the function $k_\Delta$ in the shaded region is given by $K_0 -K_1-K_2+K_{12}$.}
\label{fig-k-Delta-SL3}
\end{figure}
  
One of our main results gives a sufficient condition for $k_\Delta(x)$ to be absolutely integrable on $V$ 
(see Theorem \ref{th-comb-conv} and also Theorem \ref{th-growth-conv}).

For $\sigma_2 \preceq \sigma_1$ in $\Sigma$, let 
\[ 
K_{\sigma_1, \sigma_2}(x) = \sum_{\sigma_2 \preceq \tau \preceq \sigma_1} (-1)^{\dim \tau} K_{\tau}(x). 
\] 
Also let polyhedral regions $R_{\sigma_1}^{\sigma_2}$ and $S_{\sigma_1}^{\sigma_2}$ be as in Definition \ref{SR-defn}, i.e., 
$S_{\sigma_1}^{\sigma_2}$ is the cone in $\operatorname{Span}(\sigma_1)$ defined via the edge vectors 
and facet normals of $\sigma_1$ and $\sigma_2$ as in Definition \ref{SR-defn}(a) (or equivalently \eqref{S12}) 
and $R_{\sigma_1}^{\sigma_2} = Q_{\sigma_1} + S_{\sigma_1}^{\sigma_2}$, where $Q_{\sigma_1}$ is the 
face of $\Delta$ associated with the cone $\sigma \in \Sigma$.

\begin{theorem*}[Convergence]
Assume that the fan $\Sigma$ above is acute (cf. Definition \ref{acute}).  With the notation as above, 
suppose for any $\sigma_2 \preceq \sigma_1$, the function $K_{\sigma_1, \sigma_2}$ is rapidly decreasing 
on the shifted neighborhoods of $S^{\sigma_1}_{\sigma_2}$. (See Theorem \ref{th-growth-conv} for the precise 
definition.) Then for any polytope $\Delta \in \cP(\Sigma)$, 
the integral  
\[ 
J_\Sigma(\Delta) = \int\limits_V k_\Delta(x) \, dx 
\] 
is absolutely convergent.
\end{theorem*}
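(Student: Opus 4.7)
The plan is to reduce absolute integrability to a \emph{nearest face partition} of $V$ keyed to $\Delta$, on each cell of which the alternating sum defining $k_\Delta$ in \eqref{equ-intro-k-Delta} collapses (by telescoping) to a single term $K_{\sigma_1,\sigma_2}$, so that the rapid-decrease hypothesis can be applied cell-by-cell.

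\textbf{Step 1: The partition.} First I would show that as $(\sigma_2,\sigma_1)$ ranges over pairs $\sigma_2\preceq\sigma_1$ in $\Sigma$, the sets $R_{\sigma_1}^{\sigma_2}=Q_{\sigma_1}+S_{\sigma_1}^{\sigma_2}$ partition $V$ up to a subset of Lebesgue measure zero. Geometrically, for almost every $x\in V$ there is a unique face $Q_{\sigma_1}$ of $\Delta$ nearest to $x$; the acuteness of $\Sigma$ is precisely what guarantees the correct transversality of the outward tangent cones at faces of $\Delta$ so that this nearest-face map is well-defined generically. The displacement of $x$ from $Q_{\sigma_1}$ then lies in $\Span(\sigma_1)$, and the subfaces $\sigma_2\preceq\sigma_1$ further stratify this displacement via the edge vectors of $\sigma_1$ and facet normals of $\sigma_2$, refining the nearest-face decomposition into the cells $R_{\sigma_1}^{\sigma_2}$.

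\textbf{Step 2: Telescoping identity and integral bound.} Next I would establish the key combinatorial identity: for $x\in R_{\sigma_1}^{\sigma_2}$, one has $x\in T^-_{\Delta,\tau}$ if and only if $\sigma_2\preceq\tau\preceq\sigma_1$. Granted this, \eqref{equ-intro-k-Delta} collapses on the cell to
\[
k_\Delta(x)=\sum_{\sigma_2\preceq\tau\preceq\sigma_1}(-1)^{\dim\tau}K_\tau(x)=K_{\sigma_1,\sigma_2}(x),
\]
and hence
\[
\int_V|k_\Delta(x)|\,dx=\sum_{\sigma_2\preceq\sigma_1}\int_{R_{\sigma_1}^{\sigma_2}}|K_{\sigma_1,\sigma_2}(x)|\,dx.
\]
Since $Q_{\sigma_1}$ is compact, a Fubini argument (decomposing $V$ as $\Span(\sigma_1)^{\perp}\oplus\Span(\sigma_1)$ and using the $\Span(\sigma_1)$-invariance encoded in the $K_\tau$'s for $\tau\preceq\sigma_1$) reduces each summand to an integral of $K_{\sigma_1,\sigma_2}$ over a shifted neighborhood of $S_{\sigma_1}^{\sigma_2}$, which is finite by the rapid-decrease hypothesis. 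Finiteness of $\Sigma$ yields absolute convergence.

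\textbf{Main obstacle.} The hard part will be the combinatorial heart, namely Step 1 coupled with the indicator identity of Step 2. One must verify that the nonzero indicators on each cell $R_{\sigma_1}^{\sigma_2}$ pick out exactly the interval $[\sigma_2,\sigma_1]$ of the face poset of $\Sigma$; acuteness is essential, since without it the outward tangent cones at different faces of $\Delta$ may overlap nontransversely, breaking both the nearest-face interpretation and the clean telescoping of signs. A natural strategy is to localize at the nearest face $Q_{\sigma_1}$, translate so that the problem becomes one purely in $\Span(\sigma_1)$, and then analyze the hyperplane arrangement there cut out by the edges of $\sigma_1$ and facet normals of $\sigma_2$ to match precisely those $\tau$'s whose outward cone contains $x$. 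I expect this to reduce, after these reductions, to a local Brianchon--Gram-type inclusion--exclusion statement on a simplicial cone.
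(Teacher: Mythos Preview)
Your approach is essentially the same as the paper's. The paper packages your Step~1 and the ``if and only if'' claim of Step~2 into a single lemma (Lemma~\ref{S-partition}): for each $\sigma\in\Sigma$ one has the disjoint decomposition
\[
T^-_{\Delta,\sigma}=\bigsqcup_{\sigma_1\succeq\sigma}\ \bigsqcup_{\sigma_2\preceq\sigma}R_{\sigma_1}^{\sigma_2},
\]
which is logically equivalent to your two statements taken together (the special case $\sigma=0$ gives the partition of $V$, and the general case gives the iff). The paper then swaps the order of summation exactly as you do to obtain $k_\Delta=\sum_{\sigma_2\preceq\sigma_1}K_{\sigma_1,\sigma_2}\,\mathbf 1_{R_{\sigma_1}^{\sigma_2}}$, followed by the same Fubini-and-compactness argument on each $R_{\sigma_1}^{\sigma_2}=Q_{\sigma_1}+S_{\sigma_1}^{\sigma_2}$.

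One small correction to your sketch: in the Fubini step you invoke ``$\Span(\sigma_1)$-invariance of the $K_\tau$'s for $\tau\preceq\sigma_1$,'' but this is not available---$K_\tau$ is only $\Span(\tau)$-invariant. What is true is that every term in $K_{\sigma_1,\sigma_2}$ is $\Span(\sigma_2)$-invariant, so $K_{\sigma_1,\sigma_2}$ is $\Span(\sigma_2)$-invariant; the paper's Fubini breaks the integral into a compact $Q_{\sigma_1}$-direction, a $\Span(\sigma_2)$-direction (handled by this invariance together with the fact that $S_{\sigma_1}^{\sigma_2}$ has bounded extent in $\Span(\sigma_2)$), and the remaining $\sigma_2^\perp\cap\Span(\sigma_1)$-direction where rapid decrease is applied.
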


We note that the conditions on $K_{\sigma_1, \sigma_2}$ in the theorem are ``local'' with respect to the fan $\Sigma$ 
in the sense that for each $\sigma \in \Sigma$ we only need to check a condition about $\sigma$ and 
the functions $K_\tau$, $\tau \preceq \sigma$ (and independent of other cones in the fan and their associated functions).

We also remark that the assumption that the fan $\Sigma$ is acute is crucial; without it, the convergence 
result may fail as we show in Example \ref{obtuse} where we consider obtuse cones.

Next, we discuss our result on polynomiality.  The set $\cP(\Sigma)$ of polytopes with normal fan $\Sigma$ is 
closed under multiplication by positive scalars and the Minkowski sum. Hence it makes sense to talk about 
a polynomial function on $\cP(\Sigma)$.  In fact, if $\Sigma(1)$ denotes the set of one dimensional cones in $\Sigma$, 
then a polytope $\Delta \in \cP(\Sigma)$ has a unique representation as 
\[ 
\Delta = \{ x \in V : \langle x, v_\rho \rangle \geqslant a_\rho, \forall \rho \in \Sigma(1)\},
\] 
where $v_\rho$ denotes the unit vector along $\rho$. The numbers $(a_\rho)_{\rho \in \Sigma(1)}$ are called 
the \emph{support numbers} of $\Delta$ and can be considered as coordinates on $\cP(\Sigma)$ 
(see Section \ref{subsec-normal-fan}).  Our main polynomiality result (cf. Theorem \ref{th-polynomiality})
states that the integral of $k_\Delta(x)$ depends polynomially on $\Delta \in \cP(\Sigma)$.  

\begin{theorem*}[Polynomiality]
The function $$\Delta \mapsto J_\Sigma(\Delta)$$ is a polynomial on $\cP(\Sigma)$,  
i.e., a polynomial in the support numbers of $\Delta$. 
\end{theorem*}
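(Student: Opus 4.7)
The plan is to apply the Lawrence--Varchenko conical decomposition to reorganize $k_\Delta$ as a signed sum over the vertices of $\Delta$, in which each summand is localized at one vertex via a shifted pointed cone whose shape is determined only by the fan $\Sigma$. Coupled with the translation invariance of $K_\sigma$ along $\Span(\sigma)$ and the fact that the vertices of $\Delta$ are linear in the support numbers, this will reduce the claim to polynomial dependence of certain fixed integrals on the vertex coordinates.

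First, fix a generic direction $\xi \in V$. For each vertex $v$ of $\Delta$ with dual maximal cone $\sigma_v$, the Lawrence--Varchenko formula expresses $\mathbf{1}_\Delta$ (and, by a parallel inclusion--exclusion, the characteristic functions of the outward tangent cones $T^-_{\Delta,\sigma}$) as alternating sums of characteristic functions of polarized cones at the vertices, obtained by flipping those edges of $T^-_{\Delta,\sigma}$ whose pairing with $\xi$ is negative. Substituting into the defining formula \eqref{equ-intro-k-Delta} and regrouping the resulting double sum yields an identity of the form
\[
k_\Delta(x) \;=\; \sum_{v} \sum_{\sigma \preceq \sigma_v} \varepsilon_{v,\sigma}\, K_\sigma(x)\, \mathbf{1}_{v + C_\sigma^\xi}(x),
\]
where the cones $C_\sigma^\xi \subset V$ depend only on $\Sigma$ and $\xi$, and the signs $\varepsilon_{v,\sigma}$ are locally constant on $\cP(\Sigma)$.

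Second, translating $y = x - v$ and exploiting the identity $K_\sigma(v + y) = K_\sigma(v' + y)$ with $v'$ the component of $v$ orthogonal to $\Span(\sigma)$, each contribution becomes an integral of a $\Delta$-independent integrand over the fixed cone $C_\sigma^\xi$, evaluated at a $\Delta$-dependent offset $v'$. Because $v$ is the unique solution of the linear system $\langle v, v_\rho \rangle = a_\rho$ as $\rho$ ranges over the rays of $\sigma_v$, both $v$ and $v'$ are linear in the support numbers $(a_\rho)_{\rho \in \Sigma(1)}$. It then suffices to show that each integral
\[
I_\sigma(w) \;=\; \int_{C_\sigma^\xi} K_\sigma(w + y)\, dy, \qquad w \in \Span(\sigma)^\perp,
\]
depends polynomially on $w$; here the Khovanskii--Pukhlikov virtual polytope formalism enters, by extending $I_\sigma$ to the additive group of virtual polytopes with normal fan $\Sigma$, where multilinearity of the Minkowski sum structure together with translation identities forces polynomial dependence.

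The main obstacle is twofold. On the combinatorial side, the Lawrence--Varchenko-style rewriting must be carried out not for $\mathbf{1}_\Delta$ alone but for the alternating sum \eqref{equ-intro-k-Delta} of outward tangent cones, which requires a careful inclusion--exclusion over the face lattice of $\Delta$ and is expected to yield, as a by-product, the advertised extension of Langlands' combinatorial lemma. On the analytic side, while $k_\Delta$ is absolutely integrable by the Convergence Theorem, the individual terms in the decomposition above need not be, so interchanging sum and integral must be justified either by working in a suitable signed-measure framework or by first establishing polynomiality chamber-by-chamber on $\cP(\Sigma)$ (where the combinatorial data $\varepsilon_{v,\sigma}$ stays constant) and then matching polynomial pieces across walls; the virtual polytope framework is precisely the setting in which these identities become rigorous.
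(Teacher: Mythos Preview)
Your instinct to use Lawrence--Varchenko is right, and it is indeed what the paper does, but the way you have set up the decomposition does not lead to polynomiality; there is a genuine gap at the step where you claim that
\[
I_\sigma(w) \;=\; \int_{C_\sigma^\xi} K_\sigma(w+y)\,dy
\]
depends polynomially on $w$. This is simply false in general: $K_\sigma$ is an arbitrary continuous function on $\sigma^\perp$, and the integral of such a function over a translated cone has no reason to be polynomial in the translation (already in the one-dimensional example in the introduction, $\int_0^\infty K_0(w+y)\,dy$ is not polynomial in $w$). The Khovanskii--Pukhlikov machinery says that \emph{valuations} such as volume or lattice-point count extend polynomially to virtual polytopes; it says nothing about integrals of arbitrary integrands over translated cones. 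So the sentence ``multilinearity of the Minkowski sum structure together with translation identities forces polynomial dependence'' is where the argument breaks down.

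What the paper does differently is apply Lawrence--Varchenko not to $\Delta$ or to the unbounded tangent cones $T^-_{\Delta,\sigma}$, but to an auxiliary \emph{bounded} virtual polytope $\Gamma_{\Delta,\sigma}$ (cut out by the facet hyperplanes of $\sigma$ together with the facet hyperplanes of $T^+_{\Delta,\sigma}$). This produces, after a M\"obius inversion, an identity
\[
{\mathbf 1}_{T^-_{\Delta,\sigma}} \;=\; \sum_{\tau \preceq \sigma} (-1)^{\dim\tau}\, {\mathbf 1}_{C_\sigma^\tau}\, \gamma_{\Delta,\tau},
\]
where $\gamma_{\Delta,\tau}$ is the convex chain of $\Gamma_{\Delta,\tau}$ and is \emph{invariant along $\tau^\perp$}, while each $K_\sigma$ with $\sigma \succeq \tau$ is invariant along $\Span(\sigma) \supseteq \Span(\tau)$. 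That orthogonality of invariance directions is the point: it makes every term in $J_\Sigma(\Delta)$ \emph{factor} as
\[
\Bigl(\int_{\tau^\perp} \cdots\Bigr) \cdot \Bigl(\int_{\Span(\tau)} \gamma_{\Delta,\tau}\Bigr)
\;=\; J_{\Sigma/\tau}(0)\cdot \vol(\Gamma_{\Delta,\tau}),
\]
so that the entire $\Delta$-dependence sits in the second factor, which is a volume and hence polynomial in the support numbers. In your decomposition the $\Delta$-dependence enters as a translation parameter inside the integral rather than as a separate volume factor, and no amount of virtual-polytope formalism will rescue polynomiality from that. The missing idea is precisely this factorization, and the object that makes it possible is $\Gamma_{\Delta,\tau}$.
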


We remark that if all the $K_\sigma$ are identically equal to $1$, then $J_\Sigma(\Delta)$ coincides with the volume of $\Delta$. 
Thus our Polynomiality Theorem is a vast generalization of the classical fact that $\Delta \mapsto \vol(\Delta)$ is 
a polynomial function.  The assumption that each $K_\sigma$ is invariant in the direction of $\Span(\sigma)$ is 
obviously crucial in the proof of the Polynomiality Theorem.  For example, one can consider examples where $K_\sigma$ 
are not necessarily constant, but rather they are asymptotic to a constant in the direction of $\Span(\sigma)$.  Then 
one can still have convergence of $J_\Sigma(\Delta)$ by our more general Theorem \ref{th-comb-conv} on convergence, while 
$J_\Sigma(\Delta)$ would clearly not be a polynomial function. 

The strategy to prove our Convergence Theorem is as follows. Recall that the truncated function $k_\Delta(x)$ 
in \eqref{equ-intro-k-Delta} is defined as an alternating sum over various outward tangent cones $T^-_{\Delta, \sigma}$.  
In Lemma \ref{S-partition} we prove a certain double partition of the tangent cones $T^-_{\Delta, \sigma}$ 
in terms of certain natural subsets that appear, associated with pairs of cones in $\Sigma$, with the smaller cone being 
a face of $\sigma$ and the large one having $\sigma$ as a face.  In the double partition the inner partition essentially 
amounts to the special case where $\sigma$ is a full dimensional cone in $\Sigma$ while the outer partition amounts to 
a ``nearest face partition'' (cf. Section \ref{subsec-nearest-face}).  This allows us to repackage the various terms 
appearing in $k_\Delta$ into a sum of certain alternating sums $K_{\sigma_1,\sigma_2}$ associated with pairs of cones 
$\sigma_2 \preceq \sigma_1$ in $\Sigma$.  As a result, we reduce the question of the absolute convergence of the 
integral of $k_\Delta(x)$  over $V$ to that of absolute convergence of $K_{\sigma_1,\sigma_2}$ on the sets we obtain 
out of the partition.  This already gives our first, and more general, convergence result (cf. Theorem \ref{th-comb-conv}). 
We then go on to show that the two conditions in the above convergence theorem guarantee the convergence 
of the integral of $K_{\sigma_1,\sigma_2}$ on the required sets.

The regions we mentioned above seem to show up naturally in any treatment of convergence results, including Arthur's 
original proof of convergence of his (non-invariant) trace formula.  When $\sigma_1$ is full dimensional (corresponding 
to a maximal parabolic subgroup in Arthur's setting) and $\sigma_2$ is the origin, the region simply becomes 
the cone $\sigma_1$ shifted to the vertex of $\Delta$ corresponding to $\sigma_1$.  When $\sigma_2$ is a non-zero 
face of $\sigma_1$, then the region is again another cone shifted to the vertex.  This type of cone is precisely what 
Arthur has, for example, in \cite[Figure 8.5]{arthur-clay}.  For more general $\sigma_1$ the regions are a sum (as a set) 
of a compact face of $\Delta$ and a somewhat simpler cone.  For example, when $\dim V = 2$ these regions look like 
stripes.

A key step in the proof of the Polynomiality Theorem is Lemma \ref{lem-Gamma-Delta-sigma}, which is a statement 
concerning the polytope $\Delta$ and a cone $\sigma \in \Sigma$.  As far as we know this lemma is new and does not 
appear in Arthur's work.  It simplifies and streamlines some of the combinatorial arguments in \cite{arthur-duke, arthur-annals}.  
As a special case when $\Delta = \{0\}$, Lemma \ref{lem-Gamma-Delta-sigma} also implies 
the \emph{Langlands combinatorial lemma} (see \cite[Eqs. (8.10)--(8.11)]{arthur-clay} and \cite[Appendix]{gkm}).

When $\sigma$ is full dimensional and the vertex of $\Delta$ corresponding to $\sigma$ lies in $\sigma$, 
Lemma \ref{lem-Gamma-Delta-sigma} gives a decomposition of the characteristic function of the polytope 
$\Delta \cap \sigma$ in terms of certain cones with apexes at the vertices of this polytope. 
We obtain Lemma \ref{lem-Gamma-Delta-sigma} as a corollary of the Lawrence-Varchenko conical decomposition 
of a polytope (Theorem \ref{th-LV}). In fact, we need a more general version of this decomposition that applies to 
virtual polytopes (Theorem \ref{th-LV-virtual}). The arguments in this section rely on some key concepts and results 
from \cite{Khov-Pukh1, Khov-Pukh2} (which we review in Section \ref{sec-convex-chains}). 
We would like to point out that the proof of polynomiality shows that $J_\Sigma(\Delta)$ is a linear combination of 
volumes of certain virtual polytopes $\Gamma_{\Delta, \sigma}$, $\sigma \in \Sigma$.  

In the interest of making the connections with poset theory and M\"obius inversion more transparent, we show that 
the Langlands combinatorial lemma can be interpreted as a formula for the inverse of a certain element in 
the \emph{incidence algebra} of the poset of faces of a polyhedral cone (see Corollary \ref{cor-Langlands-comb-lem}).

Finally, we point out that Arthur's truncation parameter $T$ determines a polytope which is the convex hull of the 
Weyl group orbit of $T$.  Thus, Arthur's combinatorics is concerned with Weyl group invariant polytopes with a vertex 
in each Weyl chamber. In this paper we generalize the combinatorics to arbitrary simple polytopes.

It follows from the the proof of polynomiality that 
\[ 
J_\Sigma(0) = \sum_{\sigma_2 \preceq \sigma_2, \dim \sigma_1 = n} \int_{S_{\sigma_1}^{\sigma_2}} K_{\sigma_1, \sigma_2}(x) dx.
\] 
and that, in the case of a Weyl fan $\Sigma$ and a Weyl group invariant $\Delta$, 
the top degree homogeneous part of the polynomial $J_\Sigma(\Delta)$ is a constant multiple of the volume of $\Delta$.

\subsection*{The simplest example}
Let $\Sigma$ be the complete fan in $V = \R$ consisting of the origin $\sigma_0 = \{0\}$, the negative half-line $\sigma_-$ and the positive half-line $\sigma_+$. Let $\Delta \subset V^* \cong V = \R$ be the line segment $[a, b]$. Let $K_0$, $K_-$, $K_+$ be functions on $V$ corresponding to $\sigma_0$, $\sigma_-$, $\sigma_+$ respectively. From definition one computes that the truncated function $k_\Delta(x)$ is given by 
\[ 
k_\Delta(x) = \begin{cases} 
K_0 - K_-,  & x < a, \\ 
K_0, & a \leq x \leq b, \\ 
K_0 - K_+, & x > b. \\ 
\end{cases}
\] 
The assumption in Theorem \ref{th-polynomiality} that $K_\sigma$ is constant along $\textup{Span}(\sigma)$ implies that $K_-$ and $K_+$ 
are constant functions. Moreover, the condition that $\int\limits_{V} k_\Delta(x) dx$ is absolutely convergent means that 
$|K_0(x) - K_-|$ and $|K_0(x) - K_+|$ are integrable.  
We have 
\[ 
J_\Sigma(\Delta) = \int_\R k_\Delta(x) \, dx 
= \int_{-\infty}^0 (K_0(x) - K_-) \, dx 
+ \int_0^\infty (K_0(x) - K_+) \, dx 
+ \int_a^0 K_- \, dx
+ \int_0^b K_+ \, dx. 
\] 
Note that $\int_{-\infty}^0 (K_0(x) - K_-) \, dx$ and $\int_0^\infty (K_0(x) - K_+) \, dx$ are constants independent of $a$ and $b$ 
(whose sum we denote by the constant $c$) and $K_-$ and $K_+$ are constants.  
Hence, $J_\Sigma(\Delta) = c + (-a) \, K_- + b \, K_+$, a polynomial of degree $1$ in $a$ and $b$.

\begin{figure} [H]
\includegraphics[height=2cm]{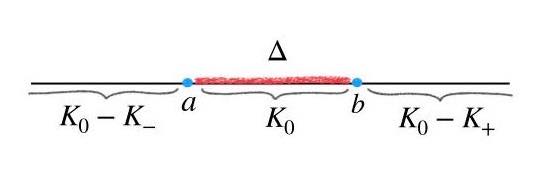}
\caption{}
\label{fig-line-segment}
\end{figure}
It is easy to see that if $K_+$ or $K_-$ is not a constant function, then the resulting $J_\Sigma(a, b)$ may not be 
a polynomial in $a$ and $b$. For example let $K_0 = K_+ = K_- = e^{x}$. Then $K_0 - K_+ = K_0 - K_+- = 0$, 
so the conditions of convergence are satisfied and in fact we have 
$J_\Sigma(a, b) = \int_a^b e^{x}dx = e^b - e^a,$ which is clearly not a polynomial in $a$ and $b$.

\subsection*{Another simple example: Brianchon-Gram} \label{simple-example}
If $K_\sigma \equiv 1$ for all the cones $\sigma$, then 
$k_\Delta$ becomes the characteristic function of the polytope $\Delta$ by the Brianchon-Gram Theorem 
(cf. Theorem \ref{th-BG}) and, as we mentioned earlier, our polynomiality result recovers the fact that the volume function 
$\Delta \mapsto \vol(\Delta)$ is a polynomial function. See Example \ref{ex-BG} for details.

\subsection*{Discrete versions of the results}

Replacing integration with summation, we obtain discrete versions of the above theorems.  
Given free abelian groups $M$ and $N$ of rank $n$ with a perfect $\Z$-pairing to identify them, 
we let $V = N_\R =  N \otimes_\Z \R$ and $V^* = M_\R = M \otimes_\Z \R$.  
Then $V$ and $V^*$ are a pair of dual $n$-dimensional real vector spaces as above.

We take a fan $\Sigma$ in $V=N_\R$ which is \emph{rational}, i.e. all its cones are generated by rational vectors 
with respect to $N \subset N_\R$.  We denote by $\cP(\Sigma, M)$ the set of polytopes with normal fan $\Sigma$ 
whose vertices lie in $M$.  It is closed under the Minkowski sum.  The discrete version of our convergence and polynomiality 
results (cf. Theorems \ref{th-growth-conv-discrete} and \ref{th-polynomiality-discrete}) are as follows. 

\begin{theorem*}[Convergence, discrete version]    
With notation as above, suppose for any $\sigma_2 \preceq \sigma_1$ in $\Sigma$, the function 
$K_{\sigma_1, \sigma_2}$ is rapidly decreasing on any shifted neighborhood of the cone $S^{\sigma_1}_{\sigma_2}$. Then for any polytope $\Delta \in \cP(\Sigma, M)$, 
the series 
\[ 
S_\Sigma(\Delta, M) = \sum_{x \in M} k_\Delta(x)dx 
\] 
is absolutely convergent.
\end{theorem*}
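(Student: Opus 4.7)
The plan is to mirror the proof of the continuous Convergence Theorem and verify that each step survives the passage from integration over $V$ to summation over the lattice $M$. First I would invoke the double partition of outward tangent cones (Lemma~\ref{S-partition}) to rewrite the truncated function $k_\Delta$ as a sum indexed by pairs $\sigma_2 \preceq \sigma_1$ in $\Sigma$, obtaining the pointwise identity
\[
k_\Delta(x) \;=\; \sum_{\sigma_2 \preceq \sigma_1} K_{\sigma_1, \sigma_2}(x)\, \mathbf{1}_{R^{\sigma_1}_{\sigma_2}}(x),
\]
where $R^{\sigma_1}_{\sigma_2} = Q_{\sigma_1} + S^{\sigma_1}_{\sigma_2}$. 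This repackaging is purely combinatorial, involving only the geometry of the fan, the polytope, and its tangent cones, so it transfers verbatim to the discrete setting with no modification.

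Applying the triangle inequality and exchanging the (finite) outer sum with the sum over $M$, the problem reduces to showing that
\[
\sum_{x \in M \cap R^{\sigma_1}_{\sigma_2}} |K_{\sigma_1, \sigma_2}(x)| \;<\; \infty
\]
for each pair $\sigma_2 \preceq \sigma_1$. To handle this, I would exploit the product structure of $R^{\sigma_1}_{\sigma_2}$: since $Q_{\sigma_1}$ is a compact face of $\Delta$, the set $R^{\sigma_1}_{\sigma_2}$ is contained in a finite union of shifted neighborhoods of $S^{\sigma_1}_{\sigma_2}$ of the type appearing in the hypothesis. On any such shifted neighborhood the rapid decrease assumption yields a bound $|K_{\sigma_1,\sigma_2}(x)| \leq C_N (1+\|x\|)^{-N}$ for every $N$, so taking $N > n = \dim V$ and using that $M \cap B(0,R)$ grows polynomially of degree $n$, the sum converges by a standard comparison with the geometric series.

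The main (minor) obstacle is the comparison between rapid decrease on a region and summability over lattice points in that region. In the continuous case this is immediate from integrability of $(1+\|x\|)^{-N}$; in the discrete case one needs the easy estimate
\[
\sum_{x \in M \cap R^{\sigma_1}_{\sigma_2}} |K_{\sigma_1,\sigma_2}(x)| \;\leq\; \frac{1}{\vol(\mathcal{F})} \int_{R^{\sigma_1}_{\sigma_2} + \mathcal{F}} \sup_{y \in \mathcal{F}} |K_{\sigma_1,\sigma_2}(x-y)|\, dx,
\]
where $\mathcal{F}$ is a fundamental domain for $M$, after which rapid decrease on shifted neighborhoods (which can absorb the bounded perturbation by $\mathcal{F}$) gives a finite integral on the right-hand side. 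With this lattice-point comparison in hand, the proof is a clean transcription of the continuous Convergence Theorem, and in fact is somewhat simpler since no measure-theoretic care is required to interchange the finite outer sum with the sum over $M$.
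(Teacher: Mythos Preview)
Your proposal is correct and follows essentially the same approach as the paper: the paper's proof of Theorem~\ref{th-growth-conv-discrete} is the single sentence ``In the proof of Theorem~\ref{th-growth-conv} replace all integrals $\int_A f(x)\,dx$ with sums $\sum_{m \in A \cap M} f(m)$,'' and you have carried out exactly this transcription, supplying more detail (in particular the lattice-to-integral comparison via a fundamental domain) than the paper itself does.
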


\begin{theorem*}[Polynomiality, discrete version]   
The function 
\[ 
\Delta \mapsto S_\Sigma(\Delta, M) 
\] 
is a polynomial on $\cP(\Sigma, M)$.
\end{theorem*}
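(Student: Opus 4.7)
The plan is to mirror the proof of the continuous Polynomiality Theorem, replacing integrals over $V$ by lattice sums over $M$ and substituting an Ehrhart-McMullen-type polynomiality for virtual polytopes in place of the polynomiality of volume on $\cP(\Sigma)$. Because the discrete Convergence Theorem guarantees that $\sum_{x \in M} |k_\Delta(x)|$ is finite, all rearrangements below are legitimate. First, I would apply the same double partition of the outward tangent cones $T^-_{\Delta,\sigma}$ from Lemma \ref{S-partition} to rewrite
\[
S_\Sigma(\Delta, M) \;=\; \sum_{\sigma_2 \preceq \sigma_1 \text{ in } \Sigma} \; \sum_{x \in M \cap R_{\sigma_1}^{\sigma_2}} K_{\sigma_1, \sigma_2}(x),
\]
so that the global sum breaks into a finite collection of sums over the shifted regions $R_{\sigma_1}^{\sigma_2} = Q_{\sigma_1} + S_{\sigma_1}^{\sigma_2}$. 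Among the ingredients on the right, only the face $Q_{\sigma_1}$ of $\Delta$ varies with the support numbers of $\Delta$; the cone $S_{\sigma_1}^{\sigma_2}$ and the function $K_{\sigma_1, \sigma_2}$ are fixed once the fan and the collection $(K_\sigma)$ are fixed.

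Next, I would invoke the virtual Lawrence-Varchenko decomposition (Theorem \ref{th-LV-virtual}) together with Lemma \ref{lem-Gamma-Delta-sigma}, exactly as in the proof of Theorem \ref{th-polynomiality}, to re-express each inner lattice sum as a signed combination of weighted lattice-point evaluations of the virtual polytopes $\Gamma_{\Delta, \sigma}$ built by Minkowski operations from the faces of $\Delta$. Since the $\Gamma_{\Delta,\sigma}$ have support numbers that depend affinely on those of $\Delta$, each resulting contribution has the shape (a constant depending only on $K_{\sigma_1,\sigma_2}$ restricted to a fixed cone) times (a weighted lattice-point count of a virtual polytope whose vertices track the support numbers of $\Delta$).

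The final step is to invoke the lattice-point analog of volume-polynomiality. In the Khovanskii-Pukhlikov formalism of convex chains (Section \ref{sec-convex-chains}), the classical Ehrhart-McMullen theorem extends to virtual polytopes: for any virtual polytope with vertices in $M$ whose support numbers range over lattice points of $\cP(\Sigma, M)$, the number of $M$-points — and, by the translation-invariance of each $K_\tau$ along $\Span(\tau)$, also the weighted sums arising in our decomposition — are polynomial in the support numbers. Summing over the finitely many pairs $\sigma_2 \preceq \sigma_1$ preserves polynomiality and yields the theorem. The main obstacle, I expect, is precisely this last input: one must verify that the weighted lattice sums on the noncompact regions $R_{\sigma_1}^{\sigma_2}$ factor cleanly into a constant obtained by summing $K_{\sigma_1,\sigma_2}$ over a fixed fundamental domain in the cone $S_{\sigma_1}^{\sigma_2}$, times a lattice-point evaluation over the compact face $Q_{\sigma_1}$ that is polynomial in the support numbers by Ehrhart-McMullen. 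This factorization relies essentially on the $\Span(\tau)$-invariance of each $K_\tau$, mirroring how the $\Span(\sigma)$-invariance was used in the continuous proof.
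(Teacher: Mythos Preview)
There is a genuine gap. The continuous proof of Theorem \ref{th-polynomiality} does \emph{not} begin with the double partition of Lemma \ref{S-partition}; that partition is used only for convergence. The polynomiality argument instead substitutes Corollary \ref{cor-Gamma} directly into the definition of $k_\Delta$, writing ${\mathbf 1}_{T^-_{\Delta,\sigma}} = \sum_{\tau \preceq \sigma}(-1)^{\dim\tau}{\mathbf 1}_{C_\sigma^\tau}\,\gamma_{\Delta,\tau}$, and then swaps the order of summation so that $\tau$ is outermost. The point is that for fixed $\tau$ the factor $\sum_{\sigma \succeq \tau}(-1)^{\dim\sigma}K_\sigma\,{\mathbf 1}_{C_\sigma^\tau}$ is invariant along $\Span(\tau)$ (each $K_\sigma$ with $\sigma \succeq \tau$ is), while $\gamma_{\Delta,\tau}$ is invariant along $\tau^\perp$; this orthogonal pair of invariances is what produces the clean factorization $J_{\Sigma/\tau}(0)\cdot\vol(\Gamma_{\Delta,\tau})$.

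Your proposed factorization does not have this structure and in fact fails. After the $R_{\sigma_1}^{\sigma_2}$ partition you are summing $K_{\sigma_1,\sigma_2}$ over $Q_{\sigma_1}+S_{\sigma_1}^{\sigma_2}$, and to split this as (sum of $K_{\sigma_1,\sigma_2}$ over a fixed region) times (lattice count of $Q_{\sigma_1}$) you would need $K_{\sigma_1,\sigma_2}$ to be invariant along $\sigma_1^\perp$, the direction in which $Q_{\sigma_1}$ lives. But $K_{\sigma_1,\sigma_2}=\sum_{\sigma_2\preceq\tau\preceq\sigma_1}(-1)^{\dim\tau}K_\tau$ is only guaranteed invariant along $\Span(\sigma_2)\subset\Span(\sigma_1)$, the smallest cone in the range, which is orthogonal to the direction you need. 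So the sum over $R_{\sigma_1}^{\sigma_2}$ genuinely depends on the position of $Q_{\sigma_1}$ in a non-polynomial way and your step~2 cannot be carried out ``exactly as in the proof of Theorem \ref{th-polynomiality}'' once you have taken this detour. A further subtlety you omit, which the paper handles explicitly in the discrete case, is that $(\Span(\tau)\cap M)+(\tau^\perp\cap M)$ need not equal $M$; one must sum over a finite set of coset representatives and invoke the shifted Ehrhart--McMullen statement (Remark \ref{rem-McMullen}) for each coset.
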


We remark that if $K_\sigma \equiv 1$ for all nonzero cones $\sigma$ in $\Sigma$, then $S_\Sigma(\Delta, M)$ coincides 
with the number of lattice points in $\Delta$.  Thus the above theorem is a far reaching generalization 
of the classical fact that $\Delta \mapsto |\Delta \cap M|$ is a polynomial function (Ehrhart polynomial, see Theorem \ref{th-Ehrhart-poly}). 
It is interesting to explore whether some well-known polynomials appearing in combinatorics and representation theory, 
e.g. in the theory of symmetric polynomials, are instances of the polynomial $J_\Sigma(\Delta)$ or $S_\Sigma(\Delta, M)$.

\subsection*{Relation with toric varieties}

Convex lattice polytopes are well-studied in combinatorial algebraic geometry in relation to the geometry of toric varieties.  
In particular, there is a dictionary between algebraic geometric notions on toric varieties and convex geometric notions 
about lattice polytopes (see \cite{Fulton, cls}). For example the Riemann-Roch theorem for toric varieties gives 
beautiful formulas relating the number of lattice points in a polytope and its volume as well as volumes of its faces 
(see \cite{Khov-Pukh1, Khov-Pukh2} and \cite{Brion-Vergne}). 

A complete (rational) fan $\Sigma$ in $N_\R$ determines a complete toric variety $X_\Sigma$ over $\C$.  
It is an equivariant compactification of the algebraic torus $T_N \cong (\C^*)^n$. The polytope $\Delta \in \cP(\Sigma)$ 
determines a $T_N$-linearized ample line bundle $\mathcal{L}_\Delta$ on $X_\Sigma$ (see Section \ref{sec-toric-var}).

In Section \ref{subsec-equiv-Euler-char} we recall the well-known fact that the Brianchon-Gram Theorem can be regarded 
as the computation of the equivariant Euler characteristic of an ample toric line bundle. 

In Section \ref{sec-geo-comb-truncation} we give two interpretations of the function $k_\Delta(x)$ in terms of 
the toric variety $X_\Sigma$. In Section 
\ref{sec-measure} 
we interpret it as a ``truncated'' measure 
on the toric variety $X_\Sigma$ obtained by truncating a measure $\omega_0$ on the open torus orbit $X_0 \subset X_\Sigma$ 
using the measures $\omega_\sigma$ on the torus orbits $O_\sigma \subset X_\Sigma$ (at infinity).  
Each tangent cone $T^-_{\Delta, \sigma}$ determines an open neighborhood $\tilde{U}_{\Delta, \sigma}$ 
of the torus orbit closure $\overline{O}_\sigma$.  
The interpretation of the tangent cones $T^-_{\Delta, \sigma}$ as neighborhoods $\tilde{U}_{\Delta, \sigma}$ justifies 
the assumption that the fan is acute: under the acute assumption the neighborhood $\tilde{U}_{\Delta, \sigma}$ contains 
the orbit closure $\overline{O}_\sigma$.

In Section \ref{sec-Lefschetz} we observe that computation of equivariant Euler characteristic of an ample toric line bundle 
has uncanny resemblances to the definition of truncated function $k_\Delta(x)$ and hence to Arthur’s construction of 
the modified kernel $k^T(x)$. This leads to an interpretation of our combinatorial truncation as a Lefschetz number for computing 
the trace of the induced linear map of a morphism on the sheaf cohomologies of a toric variety.  

We point out that the similarity between the definition of $k^T(x)$ and the Brianchon-Gram theorem 
about polytopes has been observed by Bill Casselman in \cite{casselman}.

The polynomiality of the number of lattice points in a polytope is related to the polynomiality of the Euler characteristic 
which is an immediate consequence of the Riemann-Roch Theorem.  From this point of view, it is probable that 
our Polynomiality Theorem (Theorem \ref{th-polynomiality-discrete}) is a special case of a more general Riemann-Roch type theorem.

\subsection*{Relation with Arthur's work}

As we mentioned above, Arthur's development of his non-invariant trace formula is based on the two crucial results 
that the integral of $k^T(x) = k^T(x,f)$ on $G(\Q) \backslash G(\A)^1$ is absolutely convergent for $T \in \a_P^+$ 
sufficiently regular and $f \in C_c^\infty\left(G(\A)^1\right)$ and it is a polynomial of $T$.  We recall that 
\begin{equation} \label{equ-intro-k-T}
k^T(x,f) = \sum\limits_P (-1)^{\dim (A_P/A_G)} 
\sum\limits_{\delta \in P(\Q) \backslash G(\Q)} 
K_P\left(\delta x, \delta x \right) \, 
\widehat{\tau}_P\left(H_P(\delta x) - T \right). 
\end{equation} 
Here, the outer sum is over the standard parabolic subgroups $P$ of $G$ (containing a fixed minimal parabolic subgroup $P_0$), 
$H_P : G(\A) \longrightarrow \a_P$ is the Harish-Chandra map, and $\widehat{\tau}_P(\cdot)$ is the characteristic function of 
$\left\{t \in \a_P : \varpi(t) > 0, \varpi \in \widehat{\Delta}_P \right\}$, where $\widehat{\Delta}_P$ consists of weights $\varpi_\alpha$ 
for simple roots $\alpha$ corresponding to $P$.  (We refer to \cite{arthur-clay} for any unexplained notation.)

If we take $\Sigma$ to be the Weyl fan of the group $G$, then the parabolic subgroups of $G$ correspond to the 
cones in $\Sigma$ and the choice of a minimal parabolic subgroup corresponds to a choice of a full dimensional cone 
in $\Sigma$ with the standard parabolic subgroups corresponding to the faces of this full dimensional cone. The other 
cones in $\Sigma$ then correspond to the Weyl conjugates of the standard parabolic subgroups and this correspondence 
between cones and parabolic subgroups is order reversing with respect to inclusion.

The Weyl fan $\Sigma$ is a full dimensional, complete, simplicial fan that satisfies the acute assumption. 
The toric variety $X_\Sigma$ of the fan $\Sigma$ is a compactification of an algebraic torus by adding strata (orbits) at infinity for each cone $\sigma \in \Sigma$.  
The combinatorial truncation is an alternating sum of the $K_\sigma$ times the characteristic functions of certain neighborhoods of the strata at infinity.

Similarly, one has a compactification (Mumford's \emph{toroidal compactification}) of a reductive group $G$ by adding strata $X_P$ at infinity corresponding to rational parabolic subgroups $P$ (see \cite[Chapter IV, \S 1]{Mumford}). Arthur's truncation can be interpreted as an alternating sum of the $K_P$ times characteristic functions of certain neighborhoods of the strata $X_P$ at infinity.

The similarity between \eqref{equ-intro-k-Delta} and \eqref{equ-intro-k-T} is clear. 
This suggests that there is a corresponding family of functions $(K_\sigma)_{\sigma\in\Sigma}$ defined using the $K_P$ functions.  
We believe that our combinatorial arguments, or a variant thereof, can be used to give convergence and polynomiality results of Arthur as follows. 
One would use the analytic arguments already in Arthur's work to verify the assumptions of (the variant of) our convergence and polynomiality 
theorems. As a consequence, one would recover Arthur's results making the combinatorial/geometric ingredients of his 
proofs more streamlined, at least in our view.

We expect that one can extend the geometric interpretations of truncation (e.g. as a Lefschetz number) in 
Section \ref{sec-geo-comb-truncation} to Arthur's set up by replacing the toric variety $X_\Sigma$ by 
Mumford's toroidal compactification of a reductive algebraic group $G$.
We hope to write the details, using Reduction Theory, in our next paper on this 
subject.

\subsection*{Acknowledgements}

We would like to thank James Arthur, William Casselman, Mark Goresky, Thomas Hales, Erez Lapid, Werner M\"uller, and Tian An Wong 
for useful correspondences and conversations. The second author was partially supported by NSF grant DMS-1601303 and a Simons Collaboration Grant. We also thank the referee for a careful reading of the manuscript. 

\section{Preliminaries} \label{sec-prelim}

We review some basic notions from the theory of polyhedral cones and fix some notations along the way.  
We refer to \cite[\S 1.2]{cls} for further details.

\subsection{Cones and fans}   \label{subsec-fan}

Let $V$ be a finite dimensional real vector space of dimension $n$ and let $V^*$ denote its dual. 
Recall that a \textit{(closed convex) polyhedral cone} in $V$ is a set of the form 
\[ 
\sigma = \operatorname{Cone}(W) = \left\{ \sum\limits_{w \in W} a_w w : a_w \geqslant 0 \right\} \subseteq V
\] 
with $W$ a finite subset of $V$. 
Equivalently, there is a finite subset $B$ of $V^*$ such that  
\[ 
\sigma = \bigcap\limits_{b \in B} \left\{x \in V : b(x) \geqslant 0 \right\}. 
\] 
We say that $\sigma$ is \textit{generated} by $W$.  
Also, we write $\operatorname{Cone}(\emptyset) = \{0\}$.  
The dimension of $\sigma$ is the dimension of its linear span. 
The dual cone $\sigma^\vee$ is defined as 
\[ 
\sigma^\vee := \left\{ y \in V^* : y(x) \geqslant 0 \mbox{ for all } x \in \sigma\right\}.   
\] 
Dual cones enjoy the property that if $\sigma$ is a polyhedral cone in $V$, 
then $\sigma^\vee$ is a polyhedral cone in $V^*$ and $\sigma^{\vee\vee} = \sigma$.

For a face $\tau$ of $\sigma$ (denoted $\tau \preceq \sigma$) define its dual face 
\begin{eqnarray*} 
\tau^* &=& \left\{y \in \sigma^\vee : y(x) = 0 \mbox{ for all } x\in\tau \right\} \\
	&=& \sigma^\vee \cap \tau^\perp. 
\end{eqnarray*}
Then $\tau^*$ is a face of $\sigma^\vee$, $\tau^{**} = \tau$, 
$\tau \leftrightarrow \tau^*$ is an inclusion-reversing bijection 
between faces of $\sigma$ and those of $\sigma^\vee$, and $\dim \tau + \dim \tau^* = n$.  
One dimensional cones, i.e. half-lines,  are called \textit{rays}.  
A face $\tau$ of $\sigma$ is called a \textit{facet} if 
$\dim \tau = \dim \sigma - 1$ and its linear span is referred to as a \textit{wall} of $\sigma$. 
An \textit{edge} is a face of dimension 1.

Define the \textit{relative interior} $\sigma^\circ$ of $\sigma$ to be 
the interior of $\sigma$ in its span. One then checks that $x \in \sigma^\circ$ if and only if 
$y(x) > 0$ for all $y \in \sigma^\vee \setminus \sigma^\perp$. 
A polyhedral cone $\sigma$ in $V$ is \textit{strongly convex} if the origin is a face. 
This is the case 
if and only if $\sigma$ contains no positive dimensional subspace of $V$ if and only if 
$\sigma \cap (-\sigma) = \{0\}$ if and only if $\dim \sigma^\vee = n$. 
A strongly convex polyhedral cone $\sigma \subseteq V$ is called \textit{simplicial} 
if it is generated by linearly independent vectors.
We note that the dual of a simplicial cone of maximal dimension is again simplicial.

For $y \in V^*$, we set   
\[ 
H_y := \left\{x \in V : y(x) = 0 \right\} \subseteq V    
\] 
and define the closed, resp. open, spaces 
\[ 
H^+_y := \left\{x \in V : y(x) \geqslant 0 \right\} \subseteq V   
\quad \mbox{ and } \quad 
H^-_y := \left\{x \in V : y(x) < 0 \right\} \subseteq V.  
\] 
When $y \not= 0,$ $H_y$ is a hyperplane and $H^+_y$ and $H^-_y$ are half-spaces in $V$. 
When $y=0$, we have $H_y = H^+_y = V$ while $H_y^-$ is empty. 
If $\sigma \subseteq H^+_y$ for $y \not= 0$, we say 
$H_y$ is a \textit{supporting hyperplane} and $H^+_y$, resp. $H^-_y$, is an \textit{inward, resp. outward, 
supporting half-space} of $\sigma$.  (When $y=0$, we automatically have $\sigma \subseteq H^+_0 = H_0 = V$.) 
Note that $H_y$ is a supporting hyperplane of $\sigma$ if and only if $y \in \sigma^\vee \setminus \{0\}$. 
If $y_1, y_2, \dots, y_r$ generate $\sigma^\vee,$ then $\sigma = H^+_{y_1} \cap \cdots \cap H^+_{y_r}$.  
Thus, every polyhedral cone is an intersection of finitely many closed half-spaces. 

A \textit{fan} $\Sigma$ in $V$ is a finite collection of cones $\sigma \subseteq V$ satisfying the following 
three properties: 
(a) every $\sigma \in \Sigma$ is a strongly convex polyhedral cone,     
(b) for all $\sigma \in \Sigma,$ each face of $\sigma$ also belongs to $\Sigma$, and  
(c) for all $\sigma_1,  \sigma_2 \in \Sigma$, the intersection $\sigma_1 \cap \sigma_2$ is a face of each. 
The set of $r$-dimensional cones of $\Sigma$ is denoted by $\Sigma(r)$. 
The \textit{support} of $\Sigma$ is defined by 
\[ 
|\Sigma| := \bigcup_{\sigma \in \Sigma} \, \sigma \subseteq V.  
\] 
If $|\Sigma| = V$, then $\Sigma$ is called a \textit{complete fan}. 
A \textit{simplicial fan} is a fan all whose cones are simplicial.
Every fan can be refined into a simplicial fan.

Finally, for $\sigma \in \Sigma$ we let $\Sigma / \sigma$ denote the fan in $V / \Span(\sigma)$ consisting of all the images of the cones $\sigma' \succeq \sigma$.
If we fix an inner product on $V$ then $V / \Span(\sigma)$ can be identified with $\sigma^\perp$ and  
$\Sigma / \sigma$ consists of projections of $\sigma' \succeq \sigma$ onto $\sigma^\perp$.

\subsection{Polytopes}    \label{subsec-polytope}

A \textit{polytope} is a set in $V^*$ of the form 
\[ 
P = \operatorname{Conv}(S) = \left\{ \sum\limits_{u \in S} \lambda_u u : \lambda_u \geqslant 0, \sum_{u \in S} \lambda_u 
= 1 \right\}, 
\] 
where $S$ is a finite subset of $V^*$.  We say $P$ is the \textit{convex hull} of $S$.  The dimension, 
$\dim P$, of a polytope $P$ is the dimension of the smallest affine subspace of $V^*$ containing $P$. 
Given $x \in V \setminus \{0\}$ and $r \in \R$ we have the affine hyperplane 
\[ 
H_{x,r} := \left\{ y \in V^* : y(x) = r \right\}
\]
and the closed, resp. open, half-spaces 
\[ 
H_{x,r}^{+} := \left\{ y \in V^* : y(x) \geqslant r \right\} 
\quad \mbox{ and } \quad 
H_{x,r}^{-} := \left\{ y \in V^* : y(x) < r \right\}.  
\] 
A subset $Q \subseteq P$ is a \textit{face} of $P$, denoted by $Q \preceq P$, if there is  
$x \in V \setminus \{0\}$ 
and there is $r \in \R$ with 
\[ 
Q = H_{x,r} \cap P \quad\mbox{ and }\quad P \subseteq H_{x,r}^+. 
\] 
We then say that $H_{x,r}$ is a \textit{supporting affine hyperplane}.  
The polytope $P$ is regarded as a face of itself 
and faces of $P$ of dimensions $0$, $1$, and $(\dim P - 1)$ 
are called 
\textit{vertices}, 
\textit{edges}, 
and 
\textit{facets}, 
respectively.

A polytope $P \subseteq V^*$ can be written as a finite intersection of closed half-spaces and an 
intersection 
\[ 
P = \bigcap\limits_{i=1}^s H_{x_i,r_i}^+ 
\] 
is a polytope provided that it is bounded. In general, an intersection of finitely many closed half-spaces is called 
a \emph{polyhedron} and could be unbounded. 
When $\dim P = \dim V^*$ (i.e., full dimensional polytope) 
for each facet $F$ we have a \textit{unique} 
supporting affine hyperplane and corresponding closed half-space given by 
\[ 
H_F = H_{u_F^+,a_F} = 
\left\{ y \in V^* : y(u^+_F) = a_F \right\} 
\quad \mbox{ and } \quad 
H_F^{+} = H^+_{u_F^+,a_F} = 
\left\{ y \in V^* : y(u^+_F) \geqslant a_F \right\},   
\] 
where $(u^+_F, a_F) \in V \times \R$ is unique up to multiplication by a positive real number. 
We call $u^+_F$ an \textit{inward-pointing facet normal} of the facet $F$. Hence, 
\begin{equation}\label{aF} 
P = \bigcap\limits_{F \text{ facet }} H_F^+ 
= 
\left\{ y \in V^* : y(u^+_F) \geqslant a_F \mbox{ for all proper facets } F \prec P
\right\}. 
\end{equation}
This is the so-called facet representation of $P$. We also have a similar representation with 
\textit{outward-pointing facet normals} $u^-_F = - u^+_F$.  When the facet normals $u^{\pm}_F$ 
are assumed to be unit vectors, we may call the $a_F$ the \textit{support numbers} of $P$.

Let $Q$ be a face of $P$ and define the \textit{inward, resp. outward, tangent cone} 
$T^+_{P, Q}$, resp. $T^-_{P, Q}$, via  
\begin{eqnarray} \label{inward-face-cone}
T^+_{P, Q} &:=& \left\{ y \in V^* : y(u^+_F) \geqslant a_F \mbox{ for all facets } F \supset Q \right\}, 
\\
\text{resp. } \label{outward-face-cone}
T^-_{P, Q} &:=& \left\{ y \in V^* : y(u^+_F) < a_F \mbox{ for all facets } F \supset Q \right\} \\ 
&=& \left\{ y \in V^* : y(u^-_F) > a_F \mbox{ for all facets } F \supset Q \right\}.  
\nonumber
\end{eqnarray}
See Figures \ref{fig-TC1} and \ref{fig-TC2} for illustrations of inward and outward tangent cones 
of a quadrilateral at a vertex and at an edge, respectively. 

\begin{figure}    
\includegraphics[height=5cm]{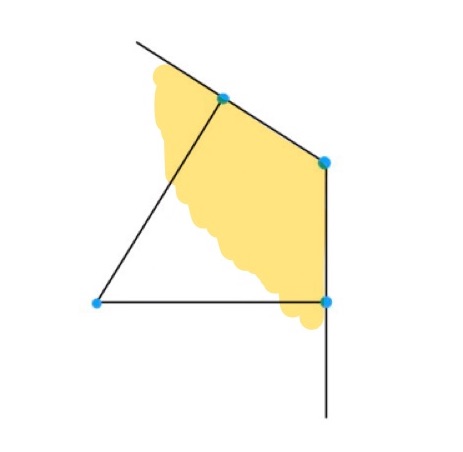}
\includegraphics[height=5cm]{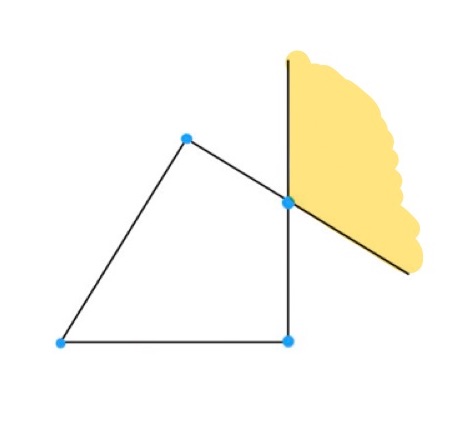}
\caption{Inward and outward tangent cones at a vertex (left inward, right outward)}
\label{fig-TC1}
\end{figure}

\begin{figure}  
\includegraphics[height=5cm]{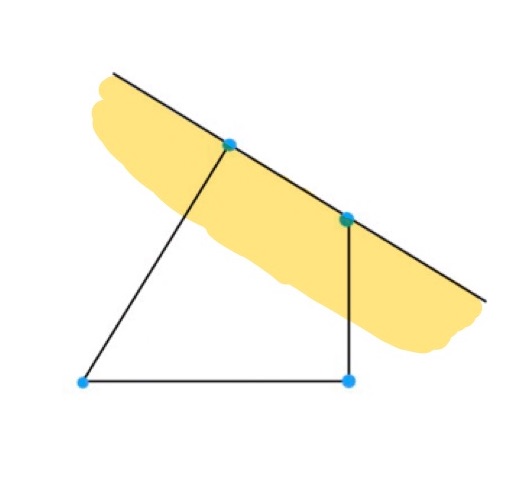}
\includegraphics[height=5cm]{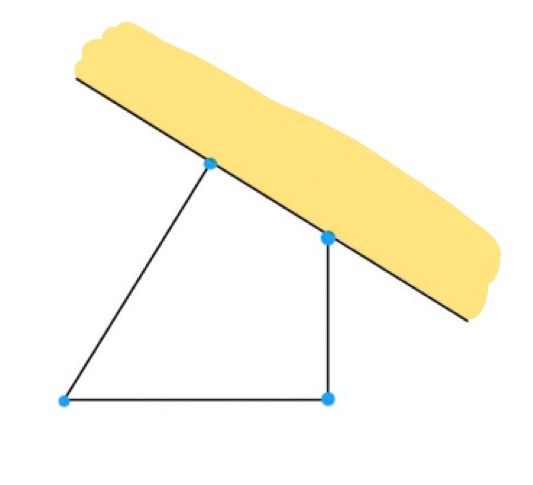}
\caption{Inward and outward tangent cones at an edge (left inward, right outward)}
\label{fig-TC2}
\end{figure}

A polytope $P \subseteq V^*$ of dimension $d$ is called a \textit{$d$-simplex} (or just a \textit{simplex}) 
if it has $d+1$ vertices, \textit{simplicial} if every facet is a simplex, and \textit{simple} if every vertex 
is the intersection of precisely $d$ facets.  

Given a polytope $P = \operatorname{Conv}(S)$, its multiple $rP = \operatorname{Conv}(rS)$ 
is also a polytope for any $r \geqslant 0$.  The Minkowski sum $P_1 + P_2 =  \{y_1+y_2 : y_i \in P_i\}$ of two polytopes 
$P_1 = \operatorname{Conv}(S_1)$ and $P_2 = \operatorname{Conv}(S_2)$ is again a 
polytope and we have the distributive law $rP + sP = (r+s) P$. 
The set $\mathcal{P}(V^*)$ of polytopes in $V^*$ together with Minkowski sum is a cancellative semigroup. 
The following theorem is originally due to Minkowski. 
\begin{theorem}[Volume polynomial]   \label{th-vol-poly}
The map $P \mapsto \vol_n(P)$ is a polynomial function on $\cP(V^*)$ in the following sense: let $P_1, \ldots, P_r$ 
be polytopes in $V^*$. For any $\lambda_1, \ldots, \lambda_r \geqslant 0$ we can form the polytope $\sum_i \lambda_i P_i$.  
Then the function $(\lambda_1, \ldots, \lambda_r) \mapsto \vol_n(\sum_i \lambda_i P_i)$ is the restriction of a homogeneous 
polynomial on $\R^r$ to the positive orthant $\R_{\geqslant 0}^r$.
\end{theorem}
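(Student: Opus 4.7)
The plan is to reduce to polytopes sharing a common simplicial normal fan and then compute the volume via a combinatorially fixed triangulation whose vertices depend linearly on the parameters. For the reduction, choose a complete simplicial fan $\Sigma$ in $V$ refining the normal fan of every $P_i$; such a fan exists since any fan admits a simplicial refinement (Section \ref{subsec-fan}). For each top-dimensional cone $\sigma \in \Sigma(n)$ and each $i$, let $v_{i,\sigma}$ be the unique vertex of $P_i$ whose inward normal cone contains $\sigma$. For $\lambda \in \R^r_{>0}$, the Minkowski sum $P(\lambda) := \sum_i \lambda_i P_i$ has, as the vertex corresponding to $\sigma$, the point
\begin{equation*}
v_\sigma(\lambda) = \sum_{i=1}^r \lambda_i\, v_{i,\sigma},
\end{equation*}
a linear function of $\lambda$.

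Since $\Sigma$ is fixed, the combinatorial type of $P(\lambda)$ is constant on $\R^r_{>0}$. Fix once and for all a triangulation rule using only the combinatorial data of $\Sigma$, for instance a pulling triangulation with respect to a fixed ordering of $\Sigma(n)$. This yields a collection $\mathcal{T}$ of $(n+1)$-tuples $(\tau_0, \ldots, \tau_n)$ of cones in $\Sigma(n)$ such that for every $\lambda \in \R^r_{>0}$ the simplices $\operatorname{Conv}(v_{\tau_0}(\lambda), \ldots, v_{\tau_n}(\lambda))$ triangulate $P(\lambda)$.

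For each tuple $\underline{\tau} \in \mathcal{T}$, the corresponding simplex has volume $\tfrac{1}{n!}\bigl|\det\bigl(v_{\tau_1}(\lambda) - v_{\tau_0}(\lambda), \ldots, v_{\tau_n}(\lambda) - v_{\tau_0}(\lambda)\bigr)\bigr|$. Each difference $v_{\tau_j}(\lambda) - v_{\tau_0}(\lambda)$ is linear in $\lambda$, so the determinant is a homogeneous polynomial of degree $n$. Orienting the simplices consistently with a fixed orientation of $V^*$ keeps the signs of these determinants constant on $\R^r_{>0}$, so the absolute values can be absorbed into fixed signs $\epsilon_{\underline{\tau}} \in \{\pm 1\}$, yielding
\begin{equation*}
\operatorname{vol}_n(P(\lambda)) = \sum_{\underline{\tau} \in \mathcal{T}} \frac{\epsilon_{\underline{\tau}}}{n!}\, \det\bigl(v_{\tau_1}(\lambda) - v_{\tau_0}(\lambda), \ldots, v_{\tau_n}(\lambda) - v_{\tau_0}(\lambda)\bigr),
\end{equation*}
a homogeneous polynomial of degree $n$ on $\R^r_{>0}$; continuity of $\operatorname{vol}_n$ extends this identity to $\R^r_{\geq 0}$.

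The delicate point is producing a triangulation whose combinatorial type is independent of $\lambda$; once that is in hand, the rest is a routine determinant computation. This independence is precisely what a fixed simplicial normal fan $\Sigma$ provides, since any purely combinatorial triangulation rule (pulling, stellar subdivision, regular triangulation with a fixed generic height function, etc.) produces the same combinatorial output as long as it depends only on the incidence structure of $\Sigma$.
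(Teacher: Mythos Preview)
The paper states this theorem as a classical result of Minkowski and does not supply a proof, so there is nothing to compare against directly; I will assess your argument on its own.

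Your strategy is sound and the proof is essentially correct: for $\lambda \in \R^r_{>0}$ the normal fan of $P(\lambda)$ is the common refinement $\Sigma_0$ of the normal fans of the $P_i$, independent of $\lambda$; hence the combinatorial type is fixed, the vertices are linear in $\lambda$, and any $\lambda$-independent triangulation into simplices with these vertices yields a sum of determinants that are homogeneous of degree $n$. The sign argument (each determinant is a nonvanishing polynomial on the connected set $\R^r_{>0}$, hence of constant sign) is also correct once one knows the simplices are nondegenerate.

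Two small points. First, the simplicial refinement of $\Sigma_0$ is a red herring and muddies the exposition. If $\Sigma$ strictly refines $\Sigma_0$ then the map $\Sigma(n)\to\{\text{vertices of }P(\lambda)\}$, $\sigma\mapsto v_\sigma(\lambda)$, is not injective, and a pulling triangulation is defined with respect to an ordering of the actual vertex set, not a redundant labeling of it. Simpliciality of the fan plays no role anywhere in your argument; pulling triangulations work for arbitrary polytopes. Just take $\Sigma=\Sigma_0$ and order $\Sigma_0(n)$; then the vertices of $P(\lambda)$ are in bijection with $\Sigma_0(n)$ and everything goes through cleanly. Second, you should dispose of the degenerate case up front: if $P(\lambda)$ fails to be full-dimensional for one $\lambda>0$ then it fails for all (the linear span of $P(\lambda)-P(\lambda)$ is $\sum_i \operatorname{Span}(P_i-P_i)$, independent of $\lambda$), and $\vol_n\equiv 0$ is trivially polynomial; otherwise every simplex in the triangulation is nondegenerate and your sign argument applies.
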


There is also a discrete analogue of Theorem \ref{th-vol-poly} which is harder and more subtle to prove. 
It is a generalization of the notion of Ehrhart polynomial. Let  $M \cong \Z^n$ be a 
full rank lattice in $V^* \cong \R^n$. Let $\cP(M)$ denote the collection of \emph{lattice polytopes with respect to $M$}, 
that is, all polytopes in $V^*$ whose vertices belong to $M$. The set $\cP(M)$ is closed under the Minkowski sum and 
multiplication by positive integers. 
\begin{theorem} [Ehrhart polynomial]  \label{th-Ehrhart-poly}
The map $P  \mapsto |P \cap M|$ is a polynomial map on $\cP(M)$. 
\end{theorem}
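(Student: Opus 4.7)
The plan is to identify $|P \cap M|$ with the Euler characteristic of a nef line bundle on a toric variety and then apply Hirzebruch--Riemann--Roch, reducing the whole statement to the polynomial dependence of $\chi$ on the first Chern class.

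Given lattice polytopes $P_1, \ldots, P_r \in \cP(M)$, first construct a complete rational simplicial fan $\Sigma$ in $V$ that refines the normal fan of every $P_i$ (take the common refinement of the normal fans and then simplicialize). Every Minkowski combination $\sum_i \lambda_i P_i$ with $\lambda_i \in \Z_{\geqslant 0}$ is then a lattice polytope whose normal fan is refined by $\Sigma$. Let $X_\Sigma$ be the complete toric variety over $\C$ attached to $\Sigma$, with open torus $T_N$. Each such polytope $P$ determines a $T_N$-equivariant, base-point free line bundle $\mathcal{L}_P$ on $X_\Sigma$ whose global sections decompose under the torus action as
$$
H^0(X_\Sigma, \mathcal{L}_P) \;=\; \bigoplus_{u \in P \cap M} \C \, \chi^u,
$$
so $|P \cap M| = \dim_\C H^0(X_\Sigma, \mathcal{L}_P)$. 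Since $\mathcal{L}_P$ is nef on a complete toric variety, Demazure's vanishing theorem gives $H^i(X_\Sigma, \mathcal{L}_P) = 0$ for $i \geqslant 1$, and hence $|P \cap M| = \chi(X_\Sigma, \mathcal{L}_P)$.

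Under this correspondence, Minkowski sum of polytopes translates to tensor product of line bundles, $\mathcal{L}_{\sum_i \lambda_i P_i} \cong \bigotimes_i \mathcal{L}_{P_i}^{\otimes \lambda_i}$. Hence by Hirzebruch--Riemann--Roch,
$$
\bigl|(\textstyle\sum_i \lambda_i P_i) \cap M\bigr| \;=\; \int_{X_\Sigma} \exp\!\bigl(\textstyle\sum_i \lambda_i \, c_1(\mathcal{L}_{P_i})\bigr) \cdot \operatorname{Td}(X_\Sigma),
$$
which is manifestly a polynomial in $(\lambda_1, \ldots, \lambda_r)$ of total degree at most $n$, as required.

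The main obstacle is that $X_\Sigma$ is only simplicial and may be singular, so the Riemann--Roch step must be justified either by the singular Riemann--Roch theorem of Baum--Fulton--MacPherson or, more concretely, by pulling back to a smooth toric refinement $\Sigma'$ of $\Sigma$ along which $\chi$ is preserved for nef line bundles. A more elementary alternative that bypasses algebraic geometry is to first prove the classical single-variable Ehrhart theorem (triangulate $P$ into lattice simplices and evaluate $|kP \cap M|$ for a simplex by a direct generating-function calculation) and then upgrade to the multivariate statement by invoking a theorem of McMullen: a valuation on $\cP(M)$ whose restriction to each one-parameter dilation family is a polynomial is, jointly, a polynomial in the support numbers.
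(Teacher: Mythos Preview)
The paper does not actually prove this theorem: it is stated in the Preliminaries section as a classical fact, and the only justification offered is the remark that it is a special case of McMullen's theorem on $\Z^n$-invariant valuations (Theorem~\ref{th-McMullen}, cited to \cite{McMullen}), which is itself quoted without proof. So there is no in-paper argument to compare against beyond that pointer.

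Your proposal is correct and supplies a genuine proof where the paper supplies none. The toric route---realise $|P\cap M|$ as $\chi(X_\Sigma,\mathcal{L}_P)$ via Demazure vanishing, then read off polynomiality in the $\lambda_i$ from Hirzebruch--Riemann--Roch---is standard and sound, and your caveat about simplicial singularities (handled either by Baum--Fulton--MacPherson or by passing to a smooth refinement, which preserves $\chi$ for nef bundles) is the right thing to flag. It is also pleasingly consonant with the paper's later Section~\ref{sec-toric-var}, which develops exactly this dictionary between lattice polytopes and line bundles on $X_\Sigma$ and even remarks that polynomiality of the lattice-point count is ``related to the polynomiality of the Euler characteristic which is an immediate consequence of the Riemann--Roch Theorem.'' Your closing alternative (single-variable Ehrhart by triangulation into simplices, then McMullen to go multivariate) is precisely the route the paper implicitly endorses by citing \cite{McMullen}. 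Either path is fine; the geometric one buys a self-contained argument and a clear conceptual explanation, while the McMullen route is more elementary but defers the work to a nontrivial combinatorial theorem.
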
 

More generally, the polynomiality property holds for any \emph{valuation} (also called \emph{finitely additive measure}).  
A function $\Phi: \cP(M) \to \R_{\geqslant 0}$ 
is called a \emph{valuation} if for all $P_1, P_2 \in \cP(M)$ the following hold: 
\begin{itemize} 
\item[(1)] 
$\Phi$ is monotone with respect to inclusion, i.e. $\Phi(P_1) \leq \Phi(P_2)$ provided that 
$P_1 \subset P_2$;  
\item[(2)] 
$\Phi(P_1 \cup P_2) = \Phi(P_1) + \Phi(P_2) - \Phi(P_1 \cap P_2)$. 
\end{itemize} 
We say $\Phi$ is \emph{$\Z^n$-invariant} 
if $\Phi(m+P) = \Phi(P)$ for all $P \in \cP(M)$ and $m \in M$. 
The following is a beautiful result of P. McMullen \cite{McMullen}. It generalizes Theorem \ref{th-Ehrhart-poly}.
\begin{theorem}  \label{th-McMullen}
Let $\Phi$ be a $\Z^n$-invariant valuation on $\cP(M)$. Then $\Phi$ is a polynomial function. 
\end{theorem}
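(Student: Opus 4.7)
The plan is to use the \emph{polytope algebra} $\Pi = \Pi(M)$, the abelian group generated by formal symbols $[P]$ for $P \in \cP(M)$, modulo the valuation relations $[P \cup Q] + [P \cap Q] = [P] + [Q]$ (whenever all four polytopes lie in $\cP(M)$) together with the translation relations $[P + m] = [P]$ for $m \in M$. By design, a $\Z^n$-invariant valuation is precisely a group homomorphism $\tilde\Phi : \Pi \to \R$ that is nonnegative on the classes $[P]$. The first step is to verify this extension property, which is a mild form of Groemer's integration theorem: the inclusion--exclusion axiom for $\Phi$ is exactly what is needed to factor it through $\Pi$, while $\Z^n$-invariance passes to the quotient by translations.

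The second step is to equip $\Pi$ with a commutative ring structure via the \emph{Minkowski product} $[P]\cdot[Q] := [P+Q]$. One must check that this is compatible with the defining relations; the key input is that Minkowski summation distributes over intersection and union for polytopes whose normal fans are in general position, and arbitrary pairs are reduced to this case by passing to a common refinement of their normal fans.

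The crux of the argument is the following polynomial identity in $\Pi$: for any fixed $P_1, \ldots, P_r \in \cP(M)$, the map
\[
\mathbf{n} = (n_1,\ldots,n_r) \;\longmapsto\; [n_1 P_1 + \cdots + n_r P_r]
\]
is, for $\mathbf{n} \in \Z_{\geqslant 0}^r$, the restriction of a polynomial with coefficients in $\Pi$. I would prove this by induction on $n = \dim V^*$, filtering $\Pi$ by the subgroups $\Pi^{\leqslant k}$ generated by classes of polytopes of dimension at most $k$. The associated graded pieces $\mathrm{gr}^k \Pi$ are controlled by two polynomial inputs: the volume polynomial of Minkowski (Theorem \ref{th-vol-poly}) for the top-dimensional piece, and the polytope algebras attached to proper faces for the lower pieces (polynomial by the inductive hypothesis on dimension). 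Compatibility of Minkowski summation with the filtration shows that the image of $[n_1 P_1 + \cdots + n_r P_r]$ in each $\mathrm{gr}^k \Pi$ is polynomial in $\mathbf{n}$, and a telescoping argument lifts these fragments to a polynomial identity in $\Pi$ itself. Applying $\tilde\Phi$ to both sides then gives the desired polynomiality of $\Phi(n_1 P_1 + \cdots + n_r P_r)$ in $\mathbf{n}$.

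The main obstacle will be establishing this polynomial structure on $\Pi$; this is essentially the content of McMullen's original argument. The delicate point is that the dimension filtration must be preserved both by Minkowski sum and by the valuation relations, and identifying the associated graded as a polynomial-type object requires carefully tracking how the face lattice of $\sum n_i P_i$ varies with $\mathbf{n}$. An alternative, more geometric route would be to apply the Lawrence--Varchenko decomposition (Theorem \ref{th-LV-virtual}) to $\sum n_i P_i$: since the normal fan of the Minkowski sum is independent of $\mathbf{n}$, each vertex moves affinely in $\mathbf{n}$ while the tangent cone shapes at those vertices are fixed. Combined with $\Z^n$-invariance, this should convert the evaluation of $\Phi$ into a polynomial expression in the vertex coordinates, bypassing much of the ring-theoretic machinery above at the cost of having to regularize $\Phi$ on unbounded cones.
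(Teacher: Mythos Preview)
The paper does not prove Theorem~\ref{th-McMullen}. It is stated as ``a beautiful result of P.~McMullen'' with a citation to \cite{McMullen}, and is used only as background (it generalizes the Ehrhart polynomial theorem and underlies Remark~\ref{rem-McMullen}). There is therefore no proof in the paper to compare your proposal against.

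Since you asked for feedback anyway: your sketch is in the spirit of McMullen's later polytope-algebra framework, but the version cited here is the 1977 paper, whose argument is more elementary and does not pass through the ring $\Pi$. More substantively, two steps in your outline are not yet solid. First, the claim that Minkowski addition ``distributes over intersection and union for polytopes whose normal fans are in general position'' is false as stated; what is true is that $[P+Q]$ is well-defined on $\Pi$, but the verification goes through the structure of the translation-invariant valuation group rather than a naive distributivity. Second, the inductive identification of the graded pieces $\mathrm{gr}^k\Pi$ with lower-dimensional polytope algebras is the real content of the theorem and cannot be dispatched by appealing to the volume polynomial plus induction; you would need to construct explicit splitting maps (e.g.\ via mixed volumes or frame functionals), which is exactly where McMullen's work lies. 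Your alternative Lawrence--Varchenko route is closer in spirit to what this paper does elsewhere, but extending an arbitrary valuation $\Phi$ from polytopes to half-open unbounded cones is not automatic and would itself require most of the strength of the theorem you are trying to prove.
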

\begin{remark}   \label{rem-McMullen}
When $\Phi(P) = |P \cap M|$ one recovers Theorem \ref{th-Ehrhart-poly}.  
Fix a point $a \in V^*$.  Theorem \ref{th-McMullen}, in particular, implies that the function defined by
$\Phi_a(P) = |P \cap (a+M)|$ is also a polynomial.  
\end{remark} 

\subsection{Normal fan}      \label{subsec-normal-fan}

For $Q \preceq P$, let 
\[ 
\sigma_Q := \operatorname{Cone}\left( u^-_F : \text{ facets } F \supset Q \right). 
\] 
Given a full dimensional 
polytope $P \subseteq V^*$, the cones $\sigma_Q$ fit together to form the \textit{normal fan} of $P$ 
in $V$ given by 
\[ 
\Sigma_P = \left\{ \sigma_Q : Q \preceq  P \right\}. 
\] 
Note that we have used \emph{outward} facet normals $u^-_F$ to define the normal fan. 
(Some authors use inward facet normals $u^+_F$ instead.) 

\begin{figure}
\includegraphics[height=6cm]{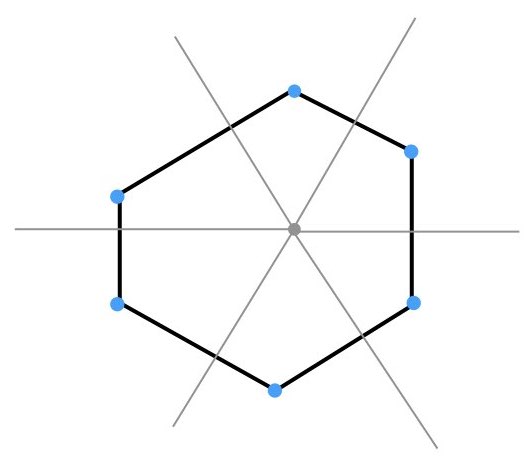}
\caption{A polygon and its normal fan. Note that in our convention we use \emph{outward} facet normals to define the cones in the normal fan.}
\label{fig-normal-fan}
\end{figure}

Let $\mathcal{P}(\Sigma)$ be the collection of all convex polytopes whose normal fan is $\Sigma$. 
This set is closed under Minkowski sum of polytopes and multiplication by positive scalars. 
For $P \in \mathcal{P}(\Sigma)$ we have an inclusion-reversing bijection 
\begin{equation}\label{face-cone-bijection} 
Q = Q_\sigma \longleftrightarrow \sigma = \sigma_Q 
\end{equation}   
between the set of faces of $P$ and the set of cones in the normal fan $\Sigma$. 
In particular, the facets $F$ of $P$ correspond to rays $\rho \in \Sigma(1)$.  For a ray $\rho \in \Sigma(1)$ we set 
$a_\rho = a_F$, where $F$ is the facet corresponding to $\rho$ and $a_F$ are the support numbers of $P$, 
see \eqref{aF}. 
The map $P \mapsto (a_\rho)_{\rho \in \Sigma(1)}$ gives an embedding of $\mathcal{P}(\Sigma)$ into 
$\R^s$, where $s=|\Sigma(1)|$.  The image is a full dimensional (open) convex polyhedral cone.

Let $P$ be a full dimensional polytope with normal fan $\Sigma_P$. Let $Q \preceq P$ be a face with corresponding cone $\sigma_Q \in \Sigma_P$. 
Then the normal fan $\Sigma_Q$ (of the polytope $Q$) is the fan $\Sigma_P / \sigma_Q$ (defined at the end of Section \ref{subsec-fan}). It consists of the 
images of the cones $\sigma' \succeq \sigma_Q$ in the quotient vector space $V / \Span(\sigma_Q)$. 

\subsection{Nearest face partition}   \label{subsec-nearest-face}
Fix an inner product $\langle \cdot, \cdot \rangle$ on $V$.  Let $P \subset V$ be a convex polyhedron.  
To $P$ we can associate a partition of $V$ into polyhedral regions $V_P^Q$ as follows. For each face $Q \preceq P$ let 
$$V_P^Q  = \left\{ x \in V : \textup{the minimum distance from } x \textup{ to } P \textup{ is attained at a point in the relative interior of } Q \right\}.$$
The following is straightforward to verify.
\begin{proposition}   \label{prop-nearest-face} 
\begin{itemize}
\item[(1)] For each face $Q \preceq P$, the set $V_P^Q$ is a polyhedron. 
\item[(2)] We have a disjoint union \[ V = \bigsqcup\limits_{Q \preceq P} V_P^Q. \]
\end{itemize}
\end{proposition}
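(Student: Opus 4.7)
The plan is to reformulate $V_P^Q$ using the metric projection onto $P$, and then unpack it through the normal-cone structure. Since $P$ is a closed convex subset of the inner-product space $V$, there is a well-defined single-valued nearest-point map $\pi : V \to P$, characterized variationally by
\[
\langle x - \pi(x),\, p - \pi(x) \rangle \leq 0 \quad \text{for all } p \in P.
\]
By definition $V_P^Q = \pi^{-1}(\operatorname{relint}(Q))$.

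For part (2), I would invoke the standard fact that the relative interiors of the faces of $P$ partition $P$, i.e.\ $P = \bigsqcup_{Q \preceq P} \operatorname{relint}(Q)$. Taking preimages under the single-valued map $\pi$ preserves disjointness, and $\pi^{-1}(P) = V$, so
\[
V = \bigsqcup_{Q \preceq P} \pi^{-1}(\operatorname{relint}(Q)) = \bigsqcup_{Q \preceq P} V_P^Q.
\]

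For part (1), I would give $V_P^Q$ an explicit polyhedral description in terms of the outward normal cone $N_Q$ of $P$ along $Q$. Using the variational inequality, one checks two directions: if $q \in \operatorname{relint}(Q)$ and $v \in N_Q$ then $\pi(q+v) = q$, while conversely if $\pi(x) \in \operatorname{relint}(Q)$ then the vector $x - \pi(x)$ lies in $N_Q$. This yields the identity
\[
V_P^Q = \operatorname{relint}(Q) + N_Q,
\]
which is cut out of $V$ by finitely many linear conditions: the non-strict facet inequalities of $P$ coming from facets containing $Q$ (and equalities for those tight on all of $Q$), together with strict inequalities distinguishing $\operatorname{relint}(Q)$ from the proper subfaces of $Q$. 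In particular $V_P^Q$ is a polyhedral region.

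The only real step is verifying the identity $V_P^Q = \operatorname{relint}(Q) + N_Q$, which is a routine unpacking of the projection's variational characterization; there is no deep obstacle. The one point deserving a remark, rather than an obstacle, is the convention for the word \emph{polyhedron}: for $P = [0,1] \subset \R$ and $Q = P$ one has $V_P^P = (0,1)$, which is not closed. So ``polyhedron'' in the statement must be read in the sense of a locally closed polyhedral region, i.e.\ the intersection of finitely many open and closed half-spaces, matching the description of $\operatorname{relint}(Q) + N_Q$ above.
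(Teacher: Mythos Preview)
Your proof is correct and complete. The paper itself does not supply a proof of this proposition at all: it is introduced with the sentence ``The following is straightforward to verify'' and left at that. So there is nothing to compare against beyond noting that your argument via the metric projection $\pi$ and the identity $V_P^Q = \operatorname{relint}(Q) + N_Q$ is exactly the kind of standard verification the authors had in mind.

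Your closing remark about the word \emph{polyhedron} is well taken and worth keeping. The paper's own definition in Section~2.2 reads ``an intersection of finitely many closed half-spaces is called a \emph{polyhedron},'' yet for $Q=P$ one has $V_P^P = \operatorname{relint}(P)$, which is open. The authors are evidently using the term loosely here to mean a locally closed polyhedral region (a finite intersection of open and closed half-spaces), and indeed immediately after the proposition they introduce the modified sets $W_P^Q$ and refer to both $V_P^Q$ and $W_P^Q$ as ``polyhedra'' while acknowledging they differ on the boundary. Your reading is the correct one.
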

We can modify the $V_P^Q$ to obtain a slightly different partition $\left\{W_P^Q : Q \preceq P \right\}$. For each face $Q \preceq P$ let
$$W_P^Q = \overline{V_P^Q} \setminus (\bigcup_{Q' \succneqq Q} \overline{V_P^{Q'}}),$$ where $\overline{V_P^Q}$ denotes the closure of $V_P^Q$.
The polyhedra $W_P^Q$ and $V_P^Q$ have the same relative interior but they are different on the boundary. 

We refer to both $\left\{V_P^Q : Q \preceq P \right\}$ and $\left\{W_P^Q : Q \preceq P \right\}$ as the \emph{nearest face partition} of $V$ with respect to the polyhedron $P$. We note that if in particular $P = \sigma$ is a cone (with apex at the origin) then the closure of the parts in the partition 
with respect to $\sigma$ in fact form a complete fan in $V$. In practice we will also use the nearest face partition to partition 
a polyhedron inside $V$. 

\begin{figure}
\includegraphics[height=6.5cm]{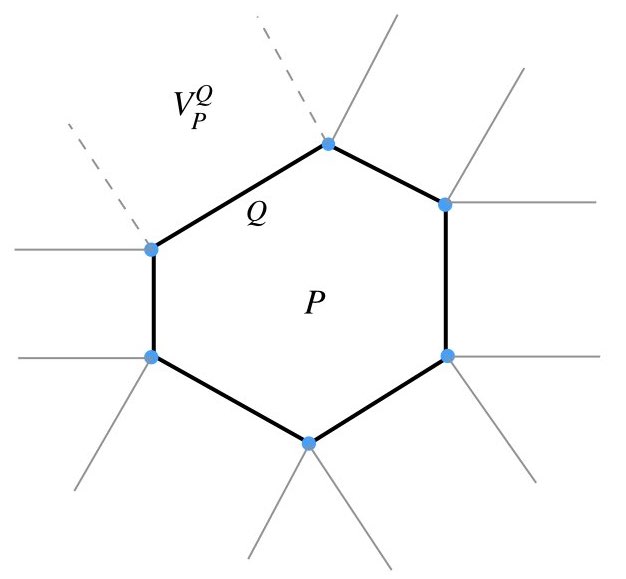}
\includegraphics[height=6.5cm]{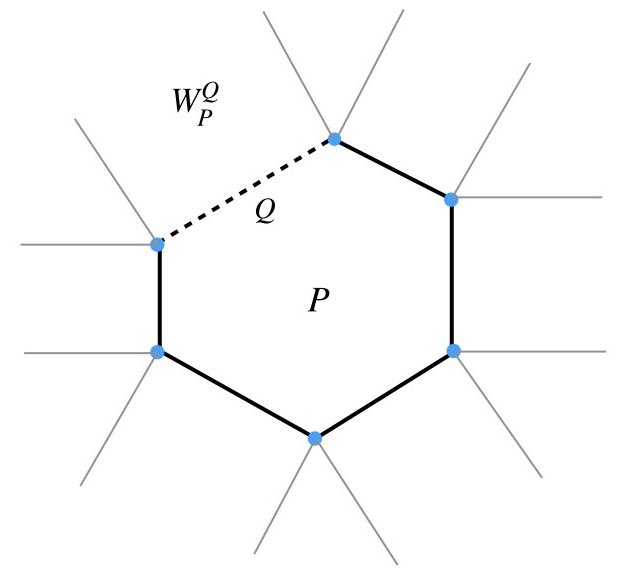}
\caption{Nearest face partition for a polygon illustrating polyhedral regions $V_P^Q$ and $W_P^Q$ corresponding to an edge $Q$.}
\label{fig-nearest-face}
\end{figure}

\subsection{Conical decomposition theorems}     \label{sec-conical-decompose}

We end this section by recalling two beautiful formulas which represent the characteristic function of a polytope as an alternating sum of 
characteristic functions of cones. For a nice overview of these decompositions and related topics we refer the reader to \cite{Beck-Haase-Sottile}.

\subsubsection*{Brianchon-Gram theorem}

The first conical decomposition theorem we discuss 
is the Brianchon-Gram theorem. 
It is named after C. J. Brianchon and J. P. Gram who independently proved the $n = 3$ case in 1837 and 1874, respectively (\cite{Brianchon} and \cite{Gram}). 
It is the mother of all cone decompositions!  See \cite[Section 1.1]{Haase} and references therein. Also see \cite{Agapito}. 
\begin{theorem}[Brianchon-Gram]   \label{th-BG}
Let $P$ be a polytope in $V^*$. We have the following equality, where ${\mathbf 1}$ denotes characteristic function 
\begin{equation}   \label{equ-BG}
{{\mathbf 1}}_P = \sum_{Q \preceq P} (-1)^{\dim Q} {{\mathbf 1}}_{T^+_{P, Q}}.
\end{equation}
\end{theorem}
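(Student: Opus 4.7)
The plan is to verify the identity pointwise. Fix $y \in V^*$ and set $L(y) := \{Q \preceq P : y \in T^+_{P,Q}\}$; it suffices to show that $\sum_{Q \in L(y)} (-1)^{\dim Q} = \mathbf{1}_P(y)$. Since $T^+_{P,Q} = \bigcap_{F \supseteq Q} H_F^+$ with $F$ ranging over facets of $P$, membership $Q \in L(y)$ is equivalent to the condition that no facet in $\mathcal{M}(y) := \{F \text{ facet} : y \notin H_F^+\}$ contains $Q$ as a face.

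If $y \in P$ then $\mathcal{M}(y) = \emptyset$, so $L(y)$ is the entire face poset of $P$. The open faces $\{Q^\circ\}_{Q \preceq P}$ form a CW-decomposition of the contractible polytope $P$, hence $\sum_{Q \preceq P} (-1)^{\dim Q} = \chi(P) = 1$, matching $\mathbf{1}_P(y) = 1$. If instead $y \notin P$, then $\mathcal{M}(y) \neq \emptyset$, and the faces \emph{excluded} from $L(y)$ are precisely those contained in the visible boundary $A(y) := \bigcup_{F \in \mathcal{M}(y)} F \subseteq \partial P$. Viewing these excluded faces as a CW-subcomplex whose Euler characteristic is $\chi(A(y))$, I get
\[
\sum_{Q \in L(y)} (-1)^{\dim Q} \;=\; \chi(P) - \chi(A(y)) \;=\; 1 - \chi(A(y)),
\]
so the identity reduces to showing $\chi(A(y)) = 1$ in this case.

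The hard step will be establishing this last claim, namely that the visible boundary of a convex polytope from an external point is a topological $(n{-}1)$-ball. I would prove it either by (a) fixing $p \in P^\circ$ and radially projecting $A(y)$ from $p$ onto a small sphere around $p$, arguing via convexity of $P$ and the external position of $y$ that the image is a geodesically convex subdisk; or (b) invoking the Bruggesser--Mani line shelling of $\partial P$ determined by the line through $y$ and $p$, whose initial block consists precisely of the facets visible from $y$ and therefore forms a shellable PL ball. Either approach yields $\chi(A(y)) = 1$, completing the argument. The case-analysis skeleton above is entirely formal; the convex-geometric fact about the visible boundary is where the real content sits.
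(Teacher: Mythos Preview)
Your proposal is correct and follows essentially the same approach as the paper's proof, which is a two-sentence sketch: for $y \in P$ the right-hand side computes $\chi(P)=1$, and for $y \notin P$ one subtracts the Euler characteristic of the visible subcomplex, which is again contractible. Your version simply makes explicit the bookkeeping (the sets $L(y)$, $\mathcal{M}(y)$, $A(y)$) and supplies two concrete routes---radial projection or Bruggesser--Mani shelling---to the contractibility of the visible boundary that the paper leaves unargued.
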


\begin{proof} 
For a point $y \in P$, the right hand side computes the Euler characteristic of $P$ and hence is equal to $1$ since $P$ is contractible. 
For $y \notin P$, we have to subtract the Euler characteristic of the sub-complex that is visible from $y$ which is again contractible.  
\end{proof}

Alternatively, one can formulate Brianchon-Gram in terms of outward looking tangent cones. 
\begin{theorem}[Brianchon-Gram, alternative version]   \label{th-BG-alt}
Let $P$ be a polytope in $V^*$.  We have the following equality:
\begin{equation} \label{equ-BG-alt}
{{\mathbf 1}}_{P} = \sum_{Q \preceq P} (-1)^{n - \dim Q} {{\mathbf 1}}_{T^-_{P, Q}}. 
\end{equation}
\end{theorem}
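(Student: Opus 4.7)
The plan is to verify \eqref{equ-BG-alt} pointwise, in direct parallel with the Euler-characteristic proof of Theorem \ref{th-BG} sketched above, but using \emph{outward} tangent cones together with the \emph{open} interior of the visible subcomplex of $\partial P$. Fix $y \in V^*$ and set
\[ V(y) := \{F \text{ facet of } P : y(u^+_F) < a_F\}, \]
the set of facets whose defining inequality is strictly violated by $y$ (equivalently, the facets \emph{visible} from $y$ when $y \notin P$). From \eqref{outward-face-cone}, $y \in T^-_{P,Q}$ iff every facet containing $Q$ lies in $V(y)$, so the set $\Delta(y) := \{Q \preceq P : y \in T^-_{P,Q}\}$ is governed entirely by the combinatorics of visible facets.

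First I would dispose of the case $y \in P$: here $V(y) = \emptyset$, and the only face with all containing facets vacuously in $V(y)$ is $Q = P$ itself, since every proper face lies in at least one facet. The condition defining $T^-_{P,P}$ is vacuous so $T^-_{P,P} = V^*$, and the right-hand side collapses to $(-1)^{n-n} \cdot 1 = 1 = {\mathbf 1}_P(y)$.

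The main step is the case $y \notin P$, where the right-hand side should vanish. I would argue that a proper face $Q \prec P$ belongs to $\Delta(y)$ precisely when $Q$ is not contained in any non-visible facet, hence $\Delta(y)$ decomposes as $\{P\}$ together with the faces lying in the relative interior $\mathcal{V}^\circ$ of the visible subcomplex $\mathcal{V} := \bigcup_{F \in V(y)} F \subseteq \partial P$. The topological input (the same one implicit in the proof of Theorem \ref{th-BG}) is that for $y \notin P$ the visible subcomplex $\mathcal{V}$ is a closed topological $(n-1)$-ball inside the $(n-1)$-sphere $\partial P$; therefore $\mathcal{V}^\circ$ is an open $(n-1)$-ball with compactly supported Euler characteristic $(-1)^{n-1}$, yielding $\sum_{Q \subset \mathcal{V}^\circ} (-1)^{\dim Q} = (-1)^{n-1}$. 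Combining with $(-1)^{n-\dim Q} = (-1)^n (-1)^{\dim Q}$, the sum evaluates to
\[ \sum_{Q \in \Delta(y)} (-1)^{n - \dim Q} = 1 + (-1)^n (-1)^{n-1} = 0, \]
as required.

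The main obstacle will be the mildly delicate topological assertion that $\mathcal{V}$ is a closed $(n-1)$-ball, together with the measure-zero locus where $y$ lies on the affine span of a facet and a strict inequality degenerates into equality. I would handle the degenerate case either by a small generic perturbation of $y$ (the equality \eqref{equ-BG-alt} then holds almost everywhere, which suffices for the integration-theoretic applications in this paper) or by stratifying according to which facet hyperplanes pass through $y$ and checking the identity on each stratum. As a fully combinatorial alternative, one can expand ${\mathbf 1}_{T^-_{P,Q}} = \prod_{F \supset Q}(1 - {\mathbf 1}_{H_F^+})$, interchange the sums over $Q$ and over subsets $S \subseteq \{F : F \supset Q\}$, and collapse the inner sum using the elementary Euler identity $\sum_{Q \preceq R}(-1)^{\dim Q} = 1$ applied to $R = \bigcap_{F \in S} F$; for simple $P$ this cleanly recovers \eqref{equ-BG-alt} from \eqref{equ-BG}, though for non-simple polytopes the bookkeeping becomes heavier than the topological argument.
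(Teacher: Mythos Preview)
Your argument is correct and is the natural adaptation to outward cones of the Euler-characteristic proof sketched for Theorem \ref{th-BG}: in place of the closed visible subcomplex and its ordinary Euler characteristic, you use its relative interior and the compactly supported Euler characteristic $(-1)^{n-1}$ of an open $(n-1)$-ball. The paper itself does not prove Theorem \ref{th-BG-alt} immediately after stating it; it is presented as a known reformulation. A derivation does appear later, in Section \ref{subsec-equiv-Euler-char}, but by a genuinely different route: one computes the equivariant Euler characteristic of $\cO(-D_{\Delta'})$ on the toric variety $X_\Sigma$ via \v{C}ech cohomology to obtain \eqref{equ-Euler-char-O(-D')}, and then invokes the Khovanskii-Pukhlikov inversion formula for convex chains to obtain \eqref{equ-Ehrhart-recip}. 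That argument is algebro-geometric and, as written, applies only to rational lattice polytopes, whereas your direct topological argument covers arbitrary full-dimensional polytopes and requires no machinery beyond what is already used for Theorem \ref{th-BG}. Your concern about degenerate $y$ lying on facet hyperplanes is slightly overcautious: with the strict inequalities in \eqref{outward-face-cone} the set $V(y)$ is well-defined at every point, the visible subcomplex $\mathcal{V}$ is still a closed $(n-1)$-ball whenever it is nonempty, and the identity holds pointwise everywhere, not merely almost everywhere.
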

The above version of the Brianchon-Gram formula looks similar to Arthur's definition of the modified kernel $k^T(x)$, 
as was observed in \cite{casselman}.  
See Figures \ref{fig-BG-v1} and \ref{fig-BG-v2} for illustrations of \eqref{equ-BG} and \eqref{equ-BG-alt}. 
\begin{figure}[H] 
\includegraphics[height=6cm]{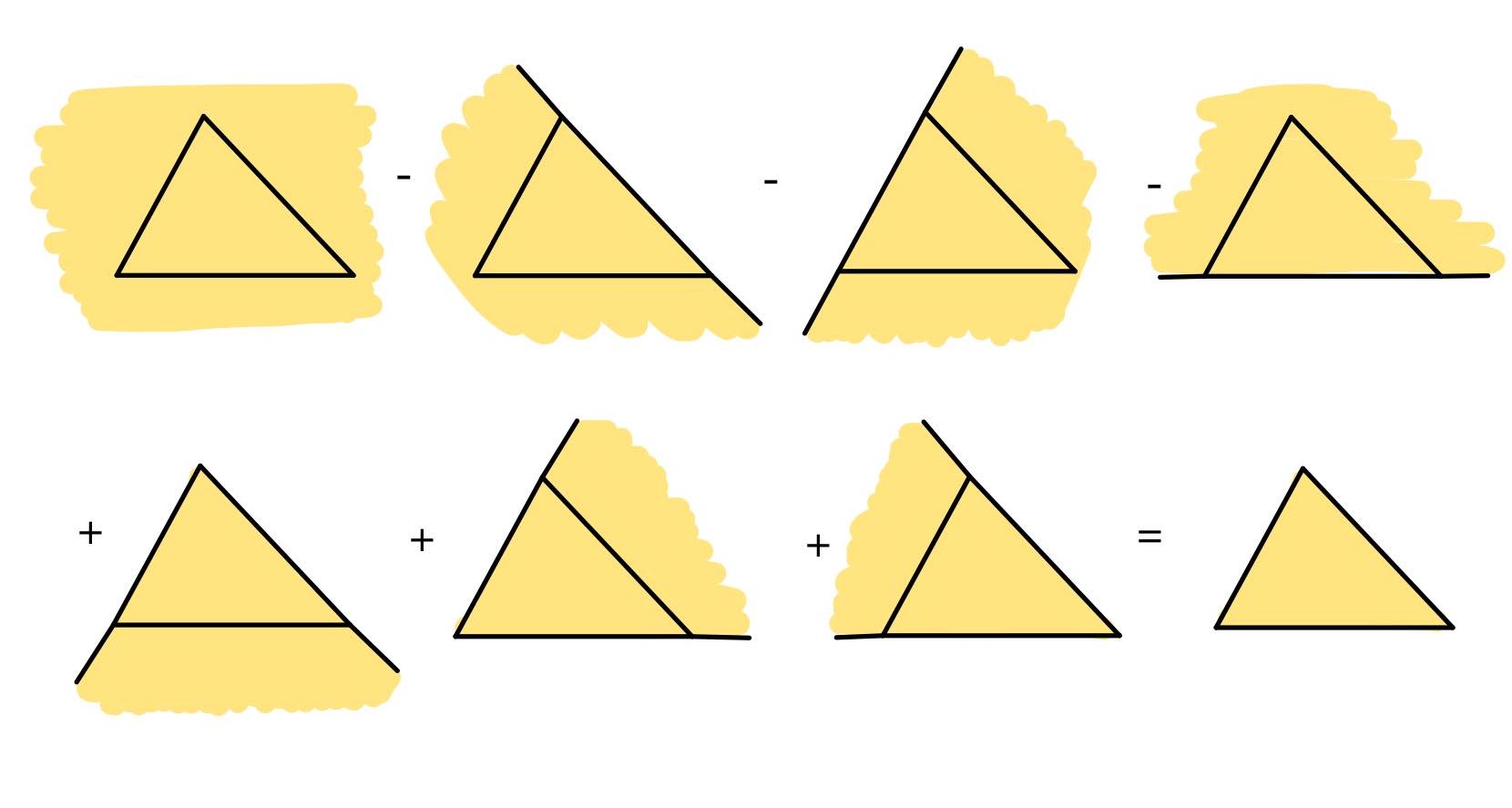}
\caption{Illustration of the Brianchon-Gram theorem (inward looking tangent cones) for a triangle}
\label{fig-BG-v1}
\end{figure}
\begin{figure}[H]  
\includegraphics[height=6cm]{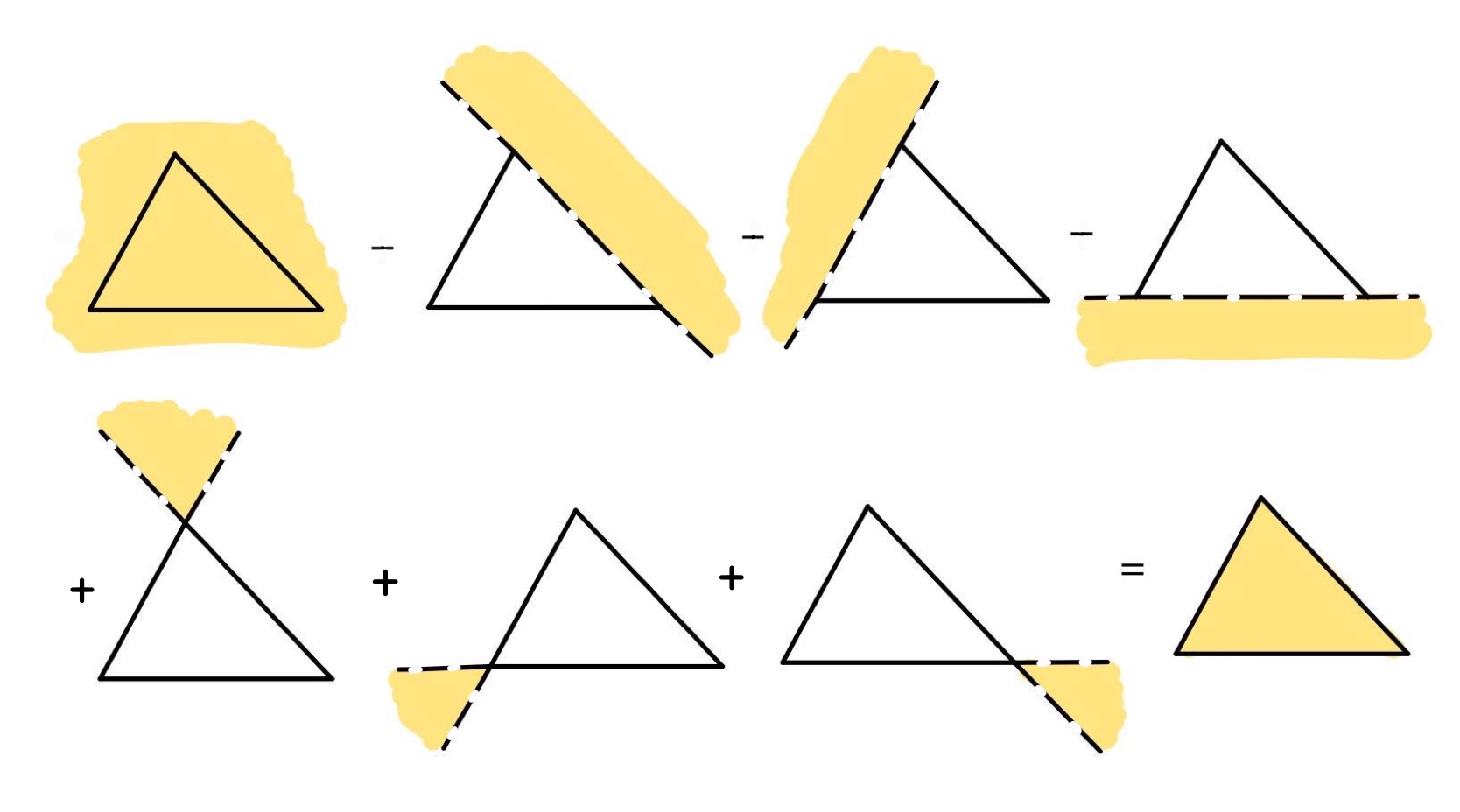}
\caption{Illustration of the Brianchon-Gram theorem (alternative version, outward looking tangent cones) for a triangle}
\label{fig-BG-v2}
\end{figure}

\subsubsection*{Lawrence-Varchenko theorem}

The second conical decomposition due to Lawrence \cite{Lawrence} and Varchenko \cite{Varchenko} represents the 
characteristic function of a polytope as an alternating sum of characteristic functions of certain cones associated to vertices 
of the polytope. It is a predecessor to the work of Khovanskii-Pukhlikov \cite{Khov-Pukh1, Khov-Pukh2} and 
Brion-Vergne \cite{Brion-Vergne}. It is related to Morse theory on polytopes as well as equivariant cohomology of toric varieties.  
The Lawrence-Varchenko theorem follows immediately from Khovanskii-Pukhlikov results as well, see \cite[Section 3.2]{Khov-Pukh2}.

Let $P \subset V$ be a simple polytope and let $v$ be a vertex of $P$. Let $w_1, \ldots w_r$ be edge vectors of $P$ 
at the vertex $v$. Fix a dual vector $\xi \in V^*$ such that $\langle w_i, \xi \rangle \neq 0$, for all $i$. We define vectors 
$w'_1, \ldots, w'_r$ as follows: 
\begin{equation*}  
w'_i = 
\begin{cases} 
 w_i, & \mbox{ if } \langle w_i, \xi \rangle > 0, \\ 
- w_i, & \mbox{ otherwise. } 
\end{cases} 
\end{equation*} 
Finally, define the \emph{polarized tangent cone} $T^\xi_{P, v}$ with apex at $v$ by 
\[ T^\xi_{P, v} = \left\{ 
\sum_{i=1}^r \lambda_i w'_i : 
\begin{array}{ll} 
\lambda_i \geqslant 0 \textup{ if } w'_i = w_i \\ 
\lambda_i > 0  \textup{ if } w'_i = -w_i 
\end{array} 
\right\}. \]

\begin{theorem}[Lawrence-Varchenko]   \label{th-LV}
With notation as above, we have the following:
\begin{equation} \label{LV-eqn}
{\mathbf 1}_P = \sum_{v} (-1)^{n_v} {\mathbf 1}_{T^\xi_{P, v}}, 
\end{equation} 
where the sum is over all the vertices $v$ of $P$, and $n_v = |\{i : w'_i = -w_i \}|$. 
\end{theorem}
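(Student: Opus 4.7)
The plan is to verify \eqref{LV-eqn} pointwise on $V$, using the simple structure of $P$ to control each vertex's contribution. Since $P$ is simple, the edge vectors $w_1^v, \ldots, w_n^v$ at each vertex $v$ form a basis of $V$, so any $x \in V$ admits a unique expansion $x - v = \sum_i \mu_i^v w_i^v$. Unpacking the definition of the polarized tangent cone, membership $x \in T^\xi_{P,v}$ is equivalent to the conditions $\mu_i^v \geq 0$ on each ascending edge (i.e., $\langle w_i^v, \xi \rangle > 0$) and $\mu_i^v < 0$ on each descending edge (i.e., $\langle w_i^v, \xi \rangle < 0$); in particular the closed inward tangent cone $T^+_{P,v}$ corresponds to the condition $\mu_i^v \geq 0$ for all $i$.

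For $x \in P$ the verification is immediate. The containment $P \subseteq T^+_{P,v} = v + \operatorname{Cone}(w_1^v, \ldots, w_n^v)$ gives $\mu_i^v \geq 0$ for every $i$ and every $v$. Hence $x \in T^\xi_{P,v}$ further forces $\mu_i^v < 0$ on every descending edge, which is inconsistent with $\mu_i^v \geq 0$ unless there are no descending edges at $v$ at all. By genericity of $\xi$, the unique such vertex is the $\xi$-minimum vertex $v^* \in P$, with $n_{v^*} = 0$, so the right-hand side of \eqref{LV-eqn} evaluates to $1 = \mathbf{1}_P(x)$.

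For $x \notin P$ I would appeal to the following ``flipping identity'' for simplicial cones: if $C = v + \operatorname{Cone}(w_1, \ldots, w_n)$ and $C^{(i)}$ denotes the cone with $w_i$ replaced by $-w_i$ taken in the open sense (i.e., $\lambda_i > 0$), then $\mathbf{1}_C + \mathbf{1}_{C^{(i)}}$ is the indicator of a slab invariant under translation by $w_i$, hence containing a line. Working modulo indicators of line-containing polyhedra (the Khovanskii--Pukhlikov framework recalled in Section~\ref{sec-convex-chains}), the Brianchon--Gram formula \eqref{equ-BG} collapses to a vertex-only sum, since tangent cones at positive-dimensional faces contain lines, and iterated application of the flipping identity converts each $\mathbf{1}_{T^+_{P,v}}$ into $(-1)^{n_v} \mathbf{1}_{T^\xi_{P,v}}$, giving \eqref{LV-eqn} in the polytope algebra. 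The main obstacle is lifting this congruence to a pointwise identity of functions on $V$: I would do this by a sweeping argument along a generic line, observing that both sides are piecewise constant and that the open/closed boundary prescriptions on the polarized cones (dictated by $\xi$) are designed precisely so that jumps of $\sum_v (-1)^{n_v} \mathbf{1}_{T^\xi_{P,v}}$ across each facet hyperplane of a $T^\xi_{P,v}$ either cancel in pairs coming from two vertices sharing that hyperplane or else match the $\pm 1$ jump of $\mathbf{1}_P$ across a facet of $P$.
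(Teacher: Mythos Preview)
The paper does not actually prove Theorem~\ref{th-LV}; it is stated as background, with citations to \cite{Lawrence}, \cite{Varchenko}, and the remark that it also follows from the Khovanskii--Pukhlikov machinery \cite[Section~3.2]{Khov-Pukh2}. So there is no ``paper's own proof'' to compare against, and your write-up is effectively supplying what the paper outsources to references.

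Your treatment of the case $x \in P$ is clean and correct: the inward tangent cone contains $P$, so all barycentric coefficients $\mu_i^v$ are nonnegative, and the polarized cone condition then singles out exactly the $\xi$-minimizing vertex with sign $+1$.

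For $x \notin P$, your outline is the right one and is precisely the Khovanskii--Pukhlikov route the paper alludes to: the flipping identity $\mathbf{1}_C + \mathbf{1}_{C^{(i)}} = \mathbf{1}_{\{\mu_j \geq 0,\, j \neq i\}}$ is correct as stated (with your open/closed conventions), and iterating it gives $\mathbf{1}_{T^+_{P,v}} \equiv (-1)^{n_v}\mathbf{1}_{T^\xi_{P,v}}$ modulo indicators of cones containing lines; Brianchon--Gram then reduces to the vertex sum in the same quotient. The only soft spot is the final step, where you acknowledge the need to lift the congruence to a genuine pointwise equality and propose a ``sweeping argument.'' This is the right instinct, but as written it is a promissory note rather than a proof: you would need to make precise which hyperplanes are crossed, why the jumps of the right-hand side pair off along edges of $P$ (two vertices sharing an edge contribute polarized cones whose relevant facets coincide and whose open/closed conventions are complementary, so the net jump is zero unless the hyperplane is a facet of $P$ itself), and why the identity then propagates from the already-verified region $P$ to all of $V$. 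Alternatively, one can avoid the sweep entirely by observing that the difference of the two sides is a finite $\Z$-combination of indicators of half-open simplicial cones, each containing a line in the $\xi$-direction, and that the particular combination produced is identically zero because every point of $V$ lies in exactly one polarized cone from each ``flip chain'' --- but this too requires a sentence or two of bookkeeping that your sketch does not yet contain.
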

See Figure \ref{fig-LV} for an illustration of \eqref{LV-eqn}

\begin{figure}[H] 
\includegraphics[height=8cm]{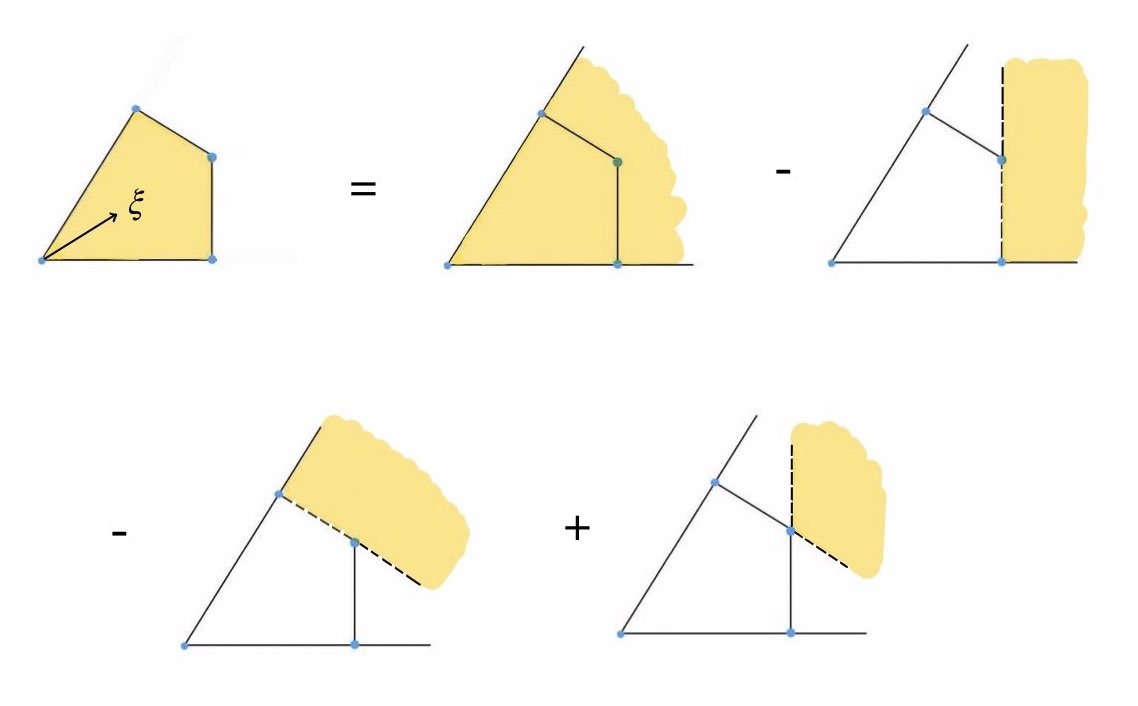}
\caption{Illustration of the Lawrence-Varchenko theorem for a quadrangle}
\label{fig-LV}
\end{figure}

\subsection{Khovanskii-Pukhlikov virtual polytopes and convex chains}     \label{sec-convex-chains}

This is a summary of some ideas and results from \cite{Khov-Pukh1, Khov-Pukh2} that we will need later.
As before $V \cong \R^n$ denotes an $n$-dimensional real vector space. 

Recall that $\cP(V^*)$ denotes the set of polytopes in the dual space $V^*$. The set $\mathcal{P}(V^*)$ 
is equipped with the operations of Minkowski sum and multiplication by positive scalars. One knows that 
$\cP(V^*)$ together with the Minkowski sum is a cancellative semigroup and hence it can be  extended to 
a real vector space $\mathcal{V}(V^*)$ consisting of formal differences $P_1 - P_2$, $P_i \in \mathcal{P}(V^*)$, 
where for polytopes $P_1, P_2, P'_1, P'_2$, we have $P_1 - P_2 = P'_1 - P'_2$ if and only if $P_1 +  P'_2 = P'_1 + P_2$.

\begin{definition}[Virtual polytope]
The elements of $\mathcal{V}(V^*)$ are called \textit{virtual polytopes} (see \cite{Khov-Pukh1}).
\end{definition}
We note that $\cV(V^*)$ is an infinite dimensional vector space.

Let $\Sigma$ be a complete fan in $V$. Recall that $\cP(\Sigma)$ denotes the set of all polytopes in $V^*$ 
whose normal fan is $\Sigma$. The set $\cP(\Sigma)$ is closed under Minkowski sum and multiplication by positive scalars.  
We denote by $\mathcal{V}(\Sigma)$ the subspace of $\mathcal{V}(V^*)$ spanned by $\mathcal{P}(\Sigma)$.  
The elements of $\mathcal{V}(\Sigma)$ are called \textit{virtual polytopes with normal fan $\Sigma$}.  
Generalizing the facet representation of a polytope $P \in \cP(\Sigma)$, i.e. representation as an intersection of half-spaces 
$H^+_{u^+_\rho, a_\rho}$, $\rho \in \Sigma(1)$, each virtual polytope in $\cV(\Sigma)$ is represented by a collection of 
oriented hyperplanes $H_{u_\rho, a_\rho}$, $\rho \in \Sigma(1)$. Note that any choice of the support numbers $a_\rho$ 
yields a virtual polytope (even if the intersection of the corresponding half-spaces is empty).  
See Figures \ref{fig-quadrangle} and \ref{fig-virt-quadrangle} for illustrations of a usual and virtual quadrangle with the same normal fan.

\begin{figure}[H] 
\includegraphics[height=4cm]{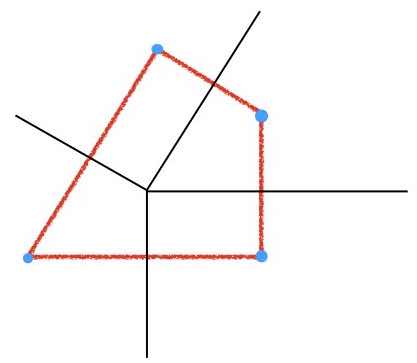}
\caption{A usual quadrangle with its normal fan}
\label{fig-quadrangle}
\end{figure}

\begin{figure}[H]
\includegraphics[height=4cm]{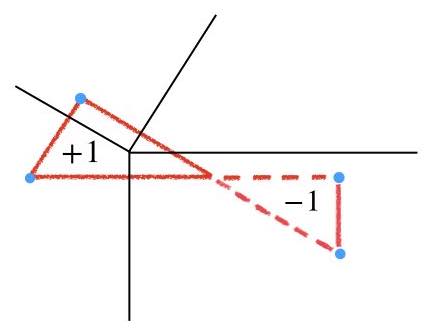}
\caption{A virtual quadrangle with the same normal fan}
\label{fig-virt-quadrangle}
\end{figure}

\begin{remark}  \label{rem-vol-virt-poly}
The notion of volume of a polytope extends to virtual polytopes via Theorem \ref{th-vol-poly}.
For a  virtual polytope $P \in \cV(V^*)$ we defined $\vol_n(P)$ to be the value of the volume polynomial at $P$. 
Similarly, the notion of the number of lattice points in a polytope extends to virtual polytopes as well.  
Let $M \subset V^*$ be a  full rank lattice. Let $\cV(M)$ denote the collection 
\emph{lattice virtual polytopes with respect to $M$}, i.e., all virtual polytopes whose vertices are in $M$.  
In other words, $\cV(M)$ is the subgroup of $\cV(V^*)$ generated by lattice polytopes in $\cP(M)$.  
By Theorem \ref{th-Ehrhart-poly} there exists a (unique) polynomial $F$ on $\cV(V^*)$ such that 
for any lattice polytope $P \in \cP(V^*)$ we have $F(P) =  |P \cap M|$.  
For a virtual lattice polytope $P \in \cV(M)$ we define the number of lattice points in $P$ to be $F(P)$.  
The same applies to any valuation on the space of polytopes (see \cite{Khov-Pukh1};  also see 
Theorem \ref{th-McMullen} and the paragraph before it for the definition of a valuation). 
\end{remark} 

Each polytope $P \in \cP(V^*)$ is determined by its characteristic function ${\mathbf 1}_P: V^* \to \{0, 1\}$.  
We would like to extend the assignment 
$P \mapsto {\mathbf 1}_P$ to virtual polytopes. The natural extension of the set of characteristic functions 
of convex polytopes (to a vector space) is the set of convex chains (defined by Khovanskii and Pukhlikov).

\begin{definition}[Convex chain]    \label{def-convex-chain}
A \emph{convex chain} $Z$ is a finite linear combination (with real coefficients) of characteristic functions 
of convex polytopes in $V^*$, that is, $Z = \sum_i \lambda_i {\mathbf 1}_{\Delta_i}$, where the $\Delta_i$ 
are convex polytopes in $V^*$ and $\lambda_i \in \R$.  We denote the set of convex chains by 
$\mathcal{Z}(V^*)$.  It is an infinite dimensional vector space with addition and scalar multiplication of functions. 
\end{definition}

Moreover, generally one can consider the characteristic functions of convex polyhedral cones.

\begin{definition}[Conical convex chain]   \label{def-conical-convex-chain}
A \emph{conical convex chain} $C$ is a finite linear combination (with real coefficients) of characteristic 
functions of shifted convex cones in $V^*$, that is, $C = \sum_i \lambda_i {\mathbf 1}_{a_i + C_i}$, where 
the $C_i$ are convex polyhedral cones in $V^*$ (with apex at the origin), $a_i \in V^*$ and $\lambda_i \in \R$.  
We denote the set of convex conical chains by $\mathcal{C}\mathcal{Z}(V^*)$.  
\end{definition}

A remarkable construction in \cite{Khov-Pukh1} is a ``convolution'' operation $*$ on $\mathcal{Z}(V^*)$ which 
makes it a commutative algebra (together with addition and scalar multiplication of functions). It has the property 
that for any two polytopes $P_1$, $P_2$ we have 
\[ 
{\mathbf 1}_{P_1} * {\mathbf 1}_{P_2} = {\mathbf 1}_{P_1 + P_2}. 
\] 
In particular, the identity element for the $*$ operation is ${\mathbf 1}_{\{0\}}$, the characteristic function of the origin.

For a polytope $P$, it is shown in \cite{Khov-Pukh1} that the inverse (with respect to $*$) of ${\mathbf 1}_P$ 
is the convex chain $(-1)^{\dim P} {\mathbf 1}_{P^\circ}$, where $P^\circ$ denotes the relative interior of $P$.  
In other words, 
\[ 
{\mathbf 1}_P * (-1)^{\dim P} {\mathbf 1}_{P^\circ} = {\mathbf 1}_{\{0\}}. 
\] 
One verifies that 
\[ 
(-1)^{\dim P} {\mathbf 1}_{P^\circ} = \sum_{Q \preceq P} (-1)^{\dim Q} {\mathbf 1}_Q 
\] 
and hence $(-1)^{\dim P} {\mathbf 1}_{P^\circ}$ is indeed a convex chain.  
It follows that 
\begin{equation}   \label{equ-char-function-virtual}
\iota: P_1 - P_2 \mapsto {\mathbf 1}_{P_1} * (-1)^{\dim P_2} {\mathbf 1}_{P_2^\circ} 
= \sum_{Q \preceq P_2} (-1)^{\dim Q} {\mathbf 1}_{P_1 + Q} 
\end{equation}
defines a natural embedding of the group of virtual polytopes (with Minkowski sum) into the semigroup of 
convex chains (with convolution $*$).  We refer to the righthand side of \eqref{equ-char-function-virtual} 
as the \emph{convex chain associated to} or \emph{characteristic function of} the virtual polytope $P_1 - P_2$.  
In fact, it is shown in \cite{Khov-Pukh1} that the image of $\iota$ coincides with the set of $*$-invertible convex chains.

We can talk about vertices of a virtual polytope. For a virtual polytope $P \in \cV(\Sigma)$, the vertices are 
in one-to-one correspondence with the full dimensional cones in $\Sigma$. Similarly, the notion of a tangent cone 
of a polytope extends to virtual polytopes. The tangent cones of $P \in \cV(\Sigma)$ are in one-to-one correspondence 
with $\sigma \in \Sigma$.

There is a generalization of the Brianchon-Gram theorem to convex chains (see \cite[\S 4, Proposition 2]{Khov-Pukh1}). 
The Lawrence-Varchenko theorem also extends to simple virtual polytopes.

\begin{theorem}[Lawrence-Varchenko for virtual polytopes]    \label{th-LV-virtual}
Let $P$ be a virtual polytope in $V^*$ and let 
$\pi: V^* \to \R$ the corresponding convex chain. Then 
\begin{equation} \label{LV-vir}
\pi = \sum_{v} (-1)^{n_v} {\mathbf 1}_{T^\xi_{P, v}}, 
\end{equation} 
where the sum is over all the vertices $v$ of $P$ and $T^\xi_{P, v}$ and $n_v$ are as in Theorem \ref{th-LV}.  
\end{theorem}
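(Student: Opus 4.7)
The plan is to bootstrap the virtual case from the classical Lawrence--Varchenko theorem (Theorem~\ref{th-LV}) via the Grothendieck completion $\mathcal{V}(\Sigma)$ of $\mathcal{P}(\Sigma)$ together with the Khovanskii--Pukhlikov convolution algebra structure on convex chains. First I would write $P = P_1 - P_2$ with $P_1, P_2 \in \mathcal{P}(\Sigma)$ honest simple polytopes sharing the common normal fan $\Sigma$ (always possible after adding a sufficiently large fixed element of $\mathcal{P}(\Sigma)$ to both sides if needed). Because the polarized tangent cone structure depends only on $(\Sigma, \xi)$, there is a fixed cone $C_\sigma^\xi \subset V^*$ and sign $n_\sigma$ for each full-dimensional $\sigma \in \Sigma(n)$ such that $T^\xi_{P', v_\sigma^{P'}} = v_\sigma^{P'} + C_\sigma^\xi$ for any (virtual) polytope $P'$ with normal fan $\Sigma$. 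The vertex $v_\sigma^{P'}$ depends affinely on the support numbers of $P'$, so in particular $v_\sigma^P = v_\sigma^{P_1} - v_\sigma^{P_2}$ is well-defined independently of the chosen presentation.

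Next I would define the conical convex chain
\[
\Phi(P') := \sum_{\sigma \in \Sigma(n)} (-1)^{n_\sigma}\, \mathbf{1}_{v_\sigma^{P'} + C_\sigma^\xi}
\]
for every $P' \in \mathcal{V}(\Sigma)$; this makes sense since $v_\sigma^{P'}$ is well-defined for virtual polytopes. By Theorem~\ref{th-LV}, $\Phi(P') = \mathbf{1}_{P'}$ for honest $P' \in \mathcal{P}(\Sigma)$. Combined with the Minkowski-additivity of vertices, $v_\sigma^{P' + P''} = v_\sigma^{P'} + v_\sigma^{P''}$, and the convolution identity $\mathbf{1}_{P'} * \mathbf{1}_{P''} = \mathbf{1}_{P' + P''}$, this shows $\Phi$ is a monoid homomorphism from $(\mathcal{P}(\Sigma), +)$ into the group of $*$-invertible convex chains in $\mathcal{Z}(V^*)$. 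By the universal property of the Grothendieck completion, $\Phi$ extends uniquely to a group homomorphism $\widetilde{\Phi} \colon \mathcal{V}(\Sigma) \to \mathcal{Z}(V^*)^\times$. The characteristic function map $\iota$ of \eqref{equ-char-function-virtual} is another such extension that agrees with $P' \mapsto \mathbf{1}_{P'}$ on $\mathcal{P}(\Sigma)$, so by uniqueness $\iota = \widetilde{\Phi}$.

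The main obstacle is identifying the abstract extension $\widetilde{\Phi}(P_1 - P_2) = \Phi(P_1) * \Phi(P_2)^{-1}$ with the explicit formula $\sum_\sigma (-1)^{n_\sigma}\, \mathbf{1}_{(v_\sigma^{P_1} - v_\sigma^{P_2}) + C_\sigma^\xi}$ obtained by substituting virtual vertices into the defining sum; equivalently, one must verify that the explicit formula is itself additive under Minkowski sum of virtual polytopes. The cleanest route is to use that every $C_\sigma^\xi$ lies in the closed halfspace $\{x \in V^* : \langle x, \xi \rangle \geq 0\}$ (since by construction $\langle w_i', \xi \rangle > 0$), so Khovanskii--Pukhlikov convolutions of shifted copies of such cones are well-defined conical convex chains; then verify $\Phi(P_1 - P_2) * \mathbf{1}_{P_2} = \mathbf{1}_{P_1}$ by expanding $\mathbf{1}_{P_2}$ via classical LV and observing that the cross terms indexed by $\sigma \neq \tau$ cancel for the same combinatorial reason they do in the proof of Theorem~\ref{th-LV}. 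This amounts to carrying out, in our setup, the convex-chain calculus developed in \cite[\S 3.2]{Khov-Pukh2}.
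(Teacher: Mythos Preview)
The paper does not supply its own proof of Theorem~\ref{th-LV-virtual}; it states the result as a known extension of the classical Lawrence--Varchenko theorem and, by way of the sentence preceding Theorem~\ref{th-LV}, points to \cite[\S 3.2]{Khov-Pukh2} for the underlying convex-chain machinery. So there is no in-paper argument to compare against, and your proposal is in effect a reconstruction of the Khovanskii--Pukhlikov argument.

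Your high-level strategy is correct: both $\iota$ and your explicit $\Phi$ agree with $P' \mapsto \mathbf{1}_{P'}$ on honest polytopes in $\mathcal{P}(\Sigma)$, and $\iota$ is by construction the unique $*$-multiplicative extension to $\mathcal{V}(\Sigma)$, so it suffices to verify $\Phi(P_1 - P_2) * \mathbf{1}_{P_2} = \mathbf{1}_{P_1}$. The observation that the polarized cones $C_\sigma^\xi$ and signs $n_\sigma$ depend only on $(\Sigma,\xi)$, while the apex $v_\sigma^{P'}$ is linear in the support numbers, is exactly the right structural fact.

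The gap is in the final paragraph. Your appeal to ``the same combinatorial reason they do in the proof of Theorem~\ref{th-LV}'' is empty here, since the paper gives no proof of Theorem~\ref{th-LV} either. More substantively, the computation hinges on the fact that the Khovanskii--Pukhlikov product $*$ is the Euler-characteristic convolution, \emph{not} the naive rule $\mathbf{1}_A * \mathbf{1}_B = \mathbf{1}_{A+B}$ extended bilinearly (indeed, under the naive rule $\mathbf{1}_P$ would not be invertible). For instance, already in dimension one, expanding $\Phi(P_1-P_2) * \mathbf{1}_{P_2}$ one finds that the off-diagonal terms $\mathbf{1}_{[\,\cdot\,,\infty)} * \mathbf{1}_{(\,\cdot\,,\infty)}$ vanish individually because the Euler characteristic of a half-open interval is zero, while the diagonal terms reproduce the classical decomposition of $\mathbf{1}_{P_1}$; the naive Minkowski rule gives the wrong answer. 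In higher dimensions the mechanism is the same but requires the calculus of half-open simplicial cones under $*$ developed in \cite[\S 3.2]{Khov-Pukh2}. You correctly flag this and cite the right source, but you have not actually carried it out, so the proposal ends at precisely the point where the paper's citation begins.
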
 
See Figure \ref{fig-LV-virt-polytope} for an illustration of \eqref{LV-vir}.

\begin{figure}[H]
\includegraphics[height=8.5cm]{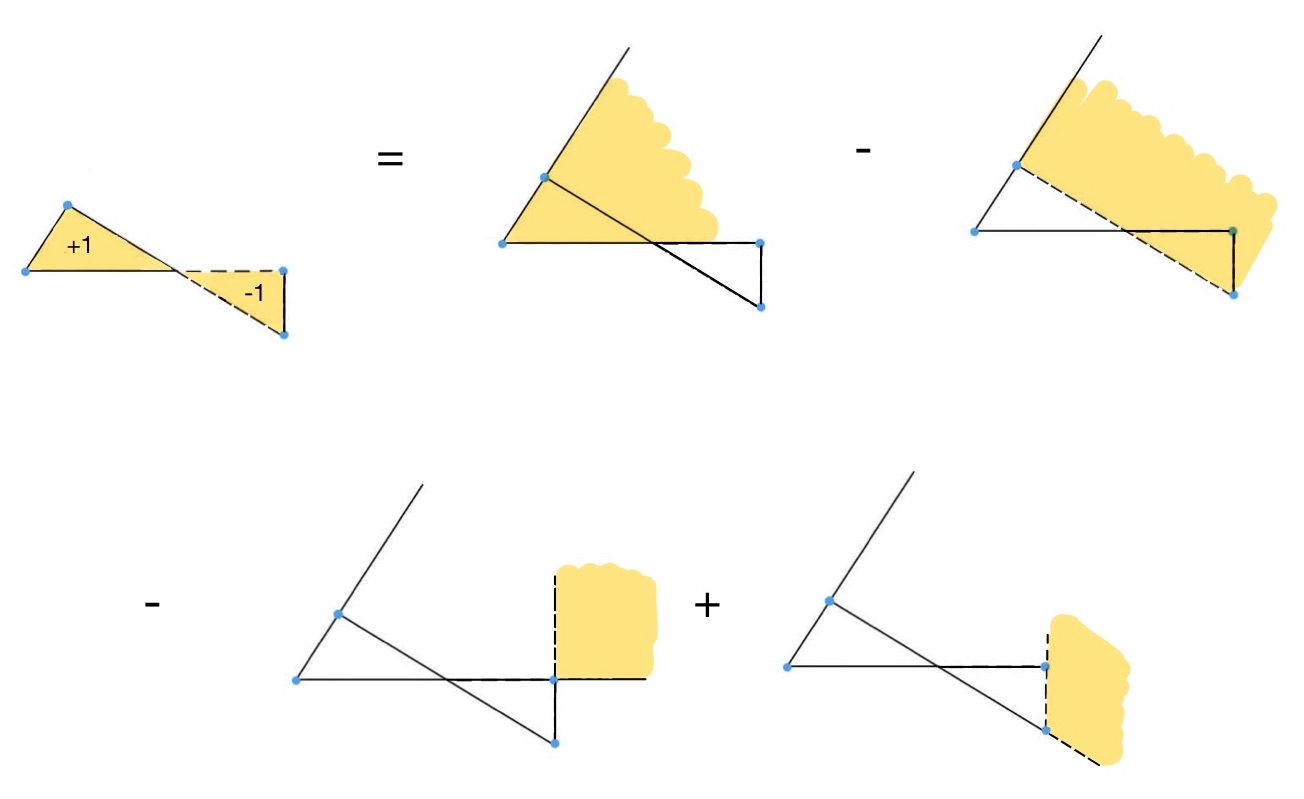}
\caption{Illustration of the Lawrence-Varchenko theorem for a virtual quadrangle}
\label{fig-LV-virt-polytope}
\end{figure}

\subsection{Incidence algebra of a poset and M\"obius inversion}   \label{sec-incidence-alg}
For a nice reference about incidence algebra and M\"obius inversion see \cite[Sections 3.6 and 3.7]{Stanley}.
Let $\mathcal{P}$ be a finite poset with partial oder $\prec$. Let $R$ be a commutative ring with $1$ which 
we take as the ring of scalars. 
Let $\tilde{\mathcal{P}} = \{(\tau, \sigma) : \tau \preceq \sigma \} \subset \mathcal{P} \times \mathcal{P}$ be 
the collection of all intervals in $\mathcal{P}$. Let $I(\mathcal{P}) = \{F: \tilde{\mathcal{P}} \to R \}$ be 
the set of functions from $\tilde{\mathcal{P}}$ to $R$. Clearly $I = I(\mathcal{P})$ is an abelian group with addition of functions. 
One defines a \emph{convolution} operation $*$ on $I$ as follows. For $F, G \in I$ define $F * G \in I$ by 
\[ 
(F * G)(\tau, \sigma) = \sum_{\tau \preceq \tau' \preceq \sigma} F(\tau, \tau') G(\tau', \sigma). 
\] 
It can be verified that $(I, +, *)$ is an algebra over $R$, called \emph{incidence algebra of the poset $\mathcal{P}$}.  
In general, $I(\mathcal{P})$ is not commutative.

The identity (for the convolution operation $*$) is the function $\delta$ defined by 
\[ 
\delta(\tau, \sigma) = \begin{cases} 1 \quad \tau = \sigma \\ 0 \quad \tau \neq \sigma \end{cases}. 
\]

A distinguished element of the incidence algebra is the constant function $\zeta(\tau, \sigma) = 1$, 
for any interval $\tau \preceq \sigma$. The \emph{M\"obius inversion formula} states that the function $\zeta$ 
is invertible and its inverse is the \emph{M\"obius} function $\mu$. For general poset $\mathcal{P}$ the M\"obius 
function is constructed/defined inductively but in specific examples it can be defined/computed explicitly.

\begin{example}[Poset of subsets of a finite set]   \label{ex-subsets-poset}
Let $\mathcal{P}$ be the poset of all subset of $\{1, \ldots, d\}$ ordered by inclusion.  
It can be shown that M\"obius function in this case is given by 
\[ 
\mu(I, J) =  (-1)^{|I| - |J|}, \quad J \subset I, 
\] 
and the M\"obius inversion formula recovers the inclusion-exclusion principle.   
\end{example}

The following is the main example of a poset that we will be concerned with in the paper.
\begin{example}[Poset of faces of a convex polyhedral cone]   \label{ex-faces-poset}
Let $\mathcal{P}$ be the poset of all faces of a given convex polyhedral cone $C \subset \R^n$.  
If $\sigma$ is simplicial of dimension $d$, then this poset is the same as the poset of all subsets of 
$\{1, \ldots, d\}$ above. It can be shown that the M\"obius function in this case is given by  
$$\mu(\tau, \sigma) = (-1)^{\dim \sigma - \dim \tau}, \quad \tau \preceq \sigma.$$
\end{example}

\section{Convergence}

In this section we give some combinatorial/geometric results that contain the combinatorial ingredients of Arthur's 
result on the convergence and polynomiality (in a truncation parameter $T$) of his truncated trace $J^T(f)$ in 
his non-invariant trace formula. See \cite[\S7]{arthur-duke} and \cite[\S2]{arthur-annals} as well as the survey 
\cite[\S\S 8--9]{arthur-clay}.

We continue to denote the $n$-dimensional real vector space we fixed in Section \ref{sec-prelim}  by $V$. 
We choose an inner product $\langle \cdot , \cdot \rangle$ on $V$ and use it to identify $V$ with its dual.  
Our results in this section depend on the choice of this inner product.  In particular, we view the dual 
cone $\sigma^\vee$ as a subset of $V$ itself, 
\[ 
\sigma^\vee := \left\{ x \in V : \langle x , y \rangle \geqslant 0, \text{for all } y \in \sigma\right\}.   
\]

Our starting point is a full dimensional, complete, simplicial, fan $\Sigma$ in $V$.  
Let $\Delta \in \mathcal{P}(\Sigma)$ be a convex polytope whose normal fan is $\Sigma$.  
Suppose that we are given a collection of continuous functions  
\begin{equation}  \label{K_sigma}
K_\sigma: V \longrightarrow \C, \quad \sigma \in \Sigma.  
\end{equation} 
To this data we  associate the \textit{truncated function} $k_\Delta : V \longrightarrow \C$ defined by 
\begin{equation}  \label{equ-comb-truncation}
k_\Delta(x) = \sum_{\sigma \in \Sigma}  (-1)^{\dim\sigma}~K_\sigma(x) ~{\mathbf 1}_{T^-_{\Delta, \sigma}}(x),    
\end{equation} 
where $T^-_{\Delta, \sigma} = T^-_{\Delta, Q_\sigma}$ is the outward tangent cone, as in \eqref{outward-face-cone}, of 
the face $Q_\sigma$ of $\Delta$ that stands in 
bijection with $\sigma$ as in \eqref{face-cone-bijection}.  The main result of this section is to prove that if the 
functions $K_\sigma$ satisfy certain assumptions, then the integral 
of $k_\Delta$ over $V$ is absolutely convergent. In particular, these assumptions hold when the functions $K_\sigma$ 
satisfy certain growth conditions as we explain below.  The latter is the setting in which Arthur's trace formula 
appears.

For a cone $\sigma \in \Sigma$ let $W(\sigma) = \left\{w_i \in V : i \in I\right\}$ 
be a set of unit edge vectors of $\sigma$.  We also let $B(\sigma) = \left\{b_i \in V : i \in I\right\}$ denote 
the set of unit, inward, facet normals in $\operatorname{Span}(\sigma)$ to the facets of $\sigma$. 
Note that the $b_i$ form a basis of $\operatorname{Span}(\sigma)$ dual to the $w_i$, i.e., 
\[ 
\langle w_i, b_j \rangle = \delta_{i,j}, \quad i,j \in I. 
\]  
When $\sigma$ is full dimensional  $B(\sigma)$ is the set of edge vectors of the dual cone $\sigma^\vee$.

\begin{definition}[Acute cone and acute fan] \label{acute}
We say that a convex cone $\sigma$ in $V$ is acute if  $ \sigma \subseteq \sigma^\vee $.  
We call the fan $\Sigma$ acute if all its cones are acute. 
\end{definition}

Notice that our definition of acute allows for right angles. 
We also remark that the notion of acute depends on the inner product we have chosen in $V$.  
Indeed the acute assumption will be crucial for the convergence results below to hold 
as Example \ref{obtuse} shows.

Observe that 
\begin{equation} \label{acute-conditions} 
\sigma \mbox{ is acute} \quad \Longleftrightarrow \quad \langle w_i, w_j \rangle \geqslant 0, \quad i,j \in I. 
\end{equation}   
It follows from Definition \ref{acute} that if $\sigma$ is acute, then for $x \in \operatorname{Span}(\sigma)$ 
\begin{equation} \label{acute-ineqs} 
\langle x , b_i \rangle > 0 \mbox{ for all } i \in I \quad \Longrightarrow \quad \langle x , w_i \rangle > 0 \mbox{ for all } i \in I. 
\end{equation}

Next, fix a pair of cones $\sigma_2 \preceq \sigma_1$ in $\Sigma$.  
Write $W(\sigma_1) = \left\{w_i \in V : i \in I_1\right\}$ and 
$B(\sigma_1) = \left\{b_i \in V : i \in I_1\right\}$ as above. 
Then $W(\sigma_2) = \left\{w_i : i \in I_2 \right\}$ for some $I_2 \subseteq I_1$ 
and the set $\left\{b_j : j \in I_1 \setminus I_2 \right\}$ consists of vectors normal to $\sigma_2$. 
(However, $B(\sigma_2)$ is not $\left\{b_j : j \in I_2 \right\}$ as the latter depends on $\sigma_1$.)

Define 
\begin{equation} \label{T} 
C_{\sigma_1} = C_{\sigma_1}^0 := 
\left\{x \in 
\operatorname{Span}(\sigma_1) : \langle x, b_j \rangle > 0,  \mbox{ for all } j \in I_1 \right\} 
\end{equation} 
and similarly define 
\begin{equation} \label{hatT} 
{\hC}_{\sigma_1} = \hC_{\sigma_1}^0 := 
\left\{x \in 
\operatorname{Span}(\sigma_1) : \langle x, w_i\rangle > 0,  \mbox{ for all } i \in I_1 \right\}.  
\end{equation} 
More generally, we define 
\begin{equation}  \label{T12}
C_{\sigma_1}^{\sigma_2} := 
\left\{x \in \operatorname{Span}(\sigma_1) : \langle x, b_j \rangle > 0, \mbox{ for all } j \in I_1 \setminus I_2 \right\}. 
\end{equation} 
and 
\begin{equation}  \label{hatT12}
\hC_{\sigma_1}^{\sigma_2} := 
\left\{x \in \operatorname{Span}(\sigma_1) : \langle x, w_i\rangle > 0, \mbox{ for all }  i \in I_1 \setminus I_2 \right\}.  
\end{equation} 

Next, we define the following subsets of $V$ which play a crucial role in our results.  

\begin{definition} \label{SR-defn}
Let $\Sigma$ be a full dimensional, complete, simplicial, acute fan in $V$. 
Assume that $\sigma_2 \preceq \sigma_1$ are two cones in $\Sigma$ with unit edge vectors indexed 
by $I_2 \subset I_1$ as above. 
\begin{itemize}
\item[(a)] 
Define $S_{\sigma_1}^{\sigma_2}$ to be the set of 
$x \in \operatorname{Span}(\sigma_1) \cap \sigma_1^\vee$ such that the face of $\sigma_1$ that 
is nearest to $x$ is the cone generated by $\left\{w_i : i \in I_1 \setminus I_2 \right\}$.  Also 
let ${\mathbf 1}_{S_{\sigma_1}^{\sigma_2}}$ denote its characteristic function.  
(See Section \ref{subsec-nearest-face}.) 
\item[(b)] 
Define the ``shifted'' subset   
\begin{equation}\label{R12} 
R_{\sigma_1}^{\sigma_2} := 
Q_{\sigma_1} + S_{\sigma_1}^{\sigma_2} = 
\left\{x_0 + x \in V : x_0 \in Q_{\sigma_1} \mbox { and } x \in S_{\sigma_1}^{\sigma_2} 
\right\}.   
\end{equation} 
\end{itemize}
\end{definition}

We also note that while the subsets $S_{\sigma_1}^{\sigma_2}$ 
may have smaller dimensions, the subsets $R_{\sigma_1}^{\sigma_2}$, when non-empty, are always 
full dimensional because the dimension of $Q_{\sigma_1}$ (as an affine space) and that of 
$S_{\sigma_1}^{\sigma_2}$ add up to $n = \dim V$.

As Lemma \ref{arthur-lemma} below shows the $S_{\sigma_1}^{\sigma_2}$ are the analogues of the subsets appearing 
in \cite[LEMMA 6.1]{arthur-duke} which also appear to play a similar crucial role in 
Arthur's results on convergence and polynomiality.

\begin{lemma} \label{arthur-lemma}
With $\sigma_2 \preceq \sigma_1$ in $\Sigma$, the vectors $w_i$ and $b_i$, and $I_2 \subset I_1$ as above we have 
\begin{equation} \label{S12} 
S_{\sigma_1}^{\sigma_2} = 
\left\{x \in \operatorname{Span}(\sigma_1) : 
\begin{array}{ll} 
\langle x, b_j \rangle > 0,  & j \in I_1 \setminus I_2 \\ 
\langle x, b_j \rangle \leqslant 0, & j \in I_2 \\ 
\langle x, w_i \rangle > 0,  & i \in I_1 \\ 
\end{array} 
\right\} 
\end{equation} 
\end{lemma}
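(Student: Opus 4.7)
The natural plan is to analyze the orthogonal direct-sum decomposition $\Span(\sigma_1) = \Span(\tau) \oplus \Span(b_j : j \in I_2)$, where $\tau := \operatorname{Cone}(w_i : i \in I_1 \setminus I_2)$. Orthogonality is forced by $\langle w_i, b_j \rangle = \delta_{i,j}$: every $b_j$ with $j \in I_2$ is perpendicular to each $w_i$ with $i \in I_1 \setminus I_2$, and a dimension count shows this is the orthogonal complement of $\Span(\tau)$ inside $\Span(\sigma_1)$. Every $x \in \Span(\sigma_1)$ therefore has a unique decomposition $x = y + z$ with $y = \sum_{i \in I_1 \setminus I_2} \alpha_i w_i$ and $z = \sum_{j \in I_2} c_j b_j$.

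First, I would characterize $S_{\sigma_1}^{\sigma_2}$ in terms of the local coefficients $(\alpha_i, c_j)$. A short calculation (allowing test directions $w - y$ for $w \in \sigma_1$) identifies the outward normal cone of $\sigma_1$ at $\tau$ inside $\Span(\sigma_1)$ with $\operatorname{Cone}(-b_j : j \in I_2)$, so the KKT characterization of nearest-point projection onto the polyhedral cone $\sigma_1$ yields $x \in S_{\sigma_1}^{\sigma_2}$ iff $\alpha_i > 0$ for every $i \in I_1 \setminus I_2$ (placing $y$ in the relative interior of $\tau$) and $c_j \leq 0$ for every $j \in I_2$ (placing $z$ in that outward normal cone). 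Second, I would express the global coordinates $\lambda_i := \langle x, b_i \rangle$ and $\mu_i := \langle x, w_i \rangle$ appearing in the statement in terms of $(\alpha_i, c_j)$. Using $\langle w_i, b_j \rangle = \delta_{ij}$ directly on the decomposition one obtains
\[
\lambda_i = \alpha_i \, \mathbf{1}_{i \in I_1 \setminus I_2} + \sum_{k \in I_2} c_k \langle b_k, b_i \rangle, \qquad \mu_i = \sum_{k \in I_1 \setminus I_2} \alpha_k \langle w_k, w_i \rangle + c_i \, \mathbf{1}_{i \in I_2}.
\]

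The bulk of the proof is then to verify, using the acuteness bound $\langle w_i, w_k \rangle \geq 0$, that the sign conditions $\{\alpha_i > 0,\ c_j \leq 0\}$ on the local coefficients are equivalent to the three global inequality systems in the lemma. The main obstacle is that neither the Gram matrix $G_{I_2} = (\langle b_k, b_l \rangle)_{k,l \in I_2}$ nor its inverse is component-wise non-negative in general, so the conditions $\lambda_j = (G_{I_2} c)_j \leq 0$ and $c_j \leq 0$ for $j \in I_2$ are not equivalent by a sign-by-sign comparison. The resolution is to use all three conditions of the lemma together: the $\mu$-positivity, via the second identity above, inverts for $i \in I_2$ to the reciprocal expression $c_i = \mu_i - \sum_{k \in I_1 \setminus I_2} \alpha_k \langle w_k, w_i \rangle$, and combining this with the $\lambda$-inequality system and the acuteness-forced non-negativity of $\langle w_k, w_i \rangle$ pins the signs of the $c_i$ down as required. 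This coupling is exactly where the acuteness hypothesis on the fan is used essentially; without it (as the excerpt warns via Example \ref{obtuse}) the equivalence breaks down.
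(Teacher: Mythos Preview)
Your KKT step drops a hypothesis that is part of the very definition of $S_{\sigma_1}^{\sigma_2}$: membership in $\sigma_1^\vee$. Definition~\ref{SR-defn}(a) asks for $x\in\Span(\sigma_1)\cap\sigma_1^\vee$ \emph{and} that the nearest face of $\sigma_1$ be $\tau=\operatorname{Cone}(w_i:i\in I_1\setminus I_2)$. Your KKT/normal-cone analysis correctly characterizes the second requirement as $\{\alpha_i>0\ (i\in I_1\setminus I_2),\ c_j\le 0\ (j\in I_2)\}$, but that set is \emph{not} $S_{\sigma_1}^{\sigma_2}$; it is the nearest-face stratum $V_{\sigma_1}^{\tau}$ inside $\Span(\sigma_1)$, which is strictly larger. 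Concretely, in $\R^2$ take $w_1=(1,0)$, $w_2=(\cos\theta,\sin\theta)$ with $0<\theta<\pi/2$, $I_2=\{2\}$, and $x=(1,-100)$. Then $\alpha_1=1>0$ and $c_2=-100\sin\theta\le 0$, so your conditions hold, yet $\mu_2=\langle x,w_2\rangle=\cos\theta-100\sin\theta<0$, so $x\notin\sigma_1^\vee$ and the third inequality block of the lemma fails. Hence the ``equivalence'' you set out to prove,
\[
\{\alpha_i>0,\ c_j\le 0\}\ \Longleftrightarrow\ \{\lambda_j>0\ (j\notin I_2),\ \lambda_j\le 0\ (j\in I_2),\ \mu_i>0\ (\forall i)\},
\]
is simply false, and no amount of Gram-matrix manipulation can rescue it. Your proposed ``resolution'' is also circular: you invert $\mu_i$ for $i\in I_2$ to get $c_i=\mu_i-\sum_{k\notin I_2}\alpha_k\langle w_k,w_i\rangle$, but the $\alpha_k$ are exactly the local coordinates you are trying to control, and the right-hand side of the lemma gives you the $\lambda$'s and $\mu$'s, not the $\alpha$'s.

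The paper's argument is much shorter because it never loses the $\sigma_1^\vee$ hypothesis. Working throughout with $x\in\Span(\sigma_1)\cap\sigma_1^\vee$, it identifies the nearest-face condition with the first two inequality blocks directly (the $b_j$ are the inward facet normals of $\sigma_1$, so the signs of $\langle x,b_j\rangle$ record which walls one is inside or outside of), and then reads the third block off from $x\in\sigma_1^\vee$ together with acuteness. Your orthogonal decomposition is a legitimate tool, but if you want to use it you must carry the extra constraint $\mu_i\ge 0$ alongside $\{\alpha_i>0,\ c_j\le 0\}$ from the start; the lemma is then the statement that, under acuteness, this combined system is (up to boundary) the same as the three displayed inequality blocks.
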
 

\begin{proof} 
Write $\tau = \operatorname{Cone}\left(w_i : i \in I_1 \setminus I_2 \right)$.  Fix $x \in \operatorname{Span}(\sigma_1) \cap \sigma^\vee_1$.  
Now $x$ belongs to $S_{\sigma_1}^{\sigma_2}$ if and only if among all the faces of $\sigma_1$ the face $\tau$ 
is the unique face that is nearest to $x$.  Note that the distances to the faces of $\sigma_1$ are controlled by the normal 
vectors $b_j$ and for $\tau$ to be the unique nearest face, we must have 
$\langle x, b_j \rangle > 0$ for $j \in I_1 \setminus I_2$ while 
$\langle x, b_j \rangle \leqslant 0$ for $j \in I_2$.  This implies that $x \in  S_{\sigma_1}^{\sigma_2}$ satisfies 
the first two sets of inequalities on the right hand side of \eqref{S12}. Also, $x$ satisfies the third set of inequalities 
on the right hand side of \eqref{S12} by \eqref{acute-ineqs} because $x \in \sigma^\vee_1$, a cone whose edge vectors are the $b_i$'s. 

Next, assume that $x$ belongs to the right hand side of \eqref{S12}.  The first two sets of inequalities imply that $\sigma_2$ 
is the unique nearest face of $\sigma_1$ to $x$ and the third set of inequalities means that $x \in \sigma_1^\vee$. 
\end{proof} 

\begin{remark} 
Even though we start with simplicial cones $\sigma_2 \preceq \sigma_1$ the cone 
$S_{\sigma_1}^{\sigma_2}$ may not be simplicial.  As an example, consider $V = \R^3$ 
and let $w_1 = e_1, w_2 = e_2, w_3=e_1+e_2+e_3$.  
Take $\sigma_2 = \operatorname{Cone}(w_3) \preceq \sigma_1 = \operatorname{Cone}(w_1, w_2, w_3)$.  
We then have $b_1 = e_1 - e_3, b_2 = e_2 - e_3, b_3=e_3$. 
A simple calculation then shows that  
$S_{\sigma_1}^{\sigma_2} = \operatorname{Cone}(w_1,w_2,b_1,b_2)$ which is not simplicial. 
\end{remark} 

The following is a type of double nearest face partition that will help us prove our convergence results.

\begin{lemma} \label{S-partition} 
Let $\Sigma$ be a full dimensional, complete, simplicial, fan in $V$ which is assumed to be acute. 
Let $\Delta \in \mathcal{P}(\Sigma)$ be a convex polytope whose normal fan is $\Sigma$. 
Then for any $\sigma \in \Sigma$ the outward tangent cone $T^-_{\Delta, \sigma}$ has the partition  
\begin{equation}  \label{partition}
T^-_{\Delta, \sigma} = 
\bigsqcup\limits_{\left\{\sigma_1\in\Sigma \, : \, \sigma\preceq\sigma_1\right\}}~
\bigsqcup\limits_{\left\{\sigma_2 \in \Sigma : \sigma_2\preceq\sigma\right\}}
R_{\sigma_1}^{\sigma_2}. 
\end{equation}
 \end{lemma}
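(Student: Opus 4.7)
The plan is to exhibit, for each $x \in T^-_{\Delta, \sigma}$, a unique pair $(\sigma_1, \sigma_2)$ with $\sigma_2 \preceq \sigma \preceq \sigma_1$ and $x \in R_{\sigma_1}^{\sigma_2}$, together with the reverse inclusion and disjointness. The strategy is a two-stage decomposition. Given $x$, I would identify $\sigma_1$ by an orthogonal-splitting criterion: for each candidate $\sigma_1 \succeq \sigma$, consider the unique orthogonal decomposition $x = x_0 + v$ with $x_0 \in A_{\sigma_1}$ (the affine span of $Q_{\sigma_1}$) and $v \in \operatorname{Span}(\sigma_1)$; the correct $\sigma_1$ is the unique one for which both (a) $x_0 \in Q_{\sigma_1}$ and (b) $v \in \operatorname{int}(\sigma_1^\vee) \cap \operatorname{Span}(\sigma_1)$. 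Once $\sigma_1$ is determined, applying Lemma \ref{arthur-lemma} (equivalently, the nearest face partition of Proposition \ref{prop-nearest-face} applied to the cone $\sigma_1$) to $v$ produces a unique $\sigma_2 \preceq \sigma_1$ with $v \in S_{\sigma_1}^{\sigma_2}$.

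For the forward direction, the defining inequalities of $T^-_{\Delta, \sigma}$ translate, via the identification $u^-_F = w_i$ for the facets $F \supset Q_\sigma$, into $\langle v, w_i\rangle > 0$ for all $i \in I_\sigma$; this immediately provides condition (3) of Lemma \ref{arthur-lemma} for $i \in I_\sigma$, and for $i \in I_1 \setminus I_\sigma$ the condition follows by combining the acute assumption on $\sigma_1$ with the strict positivity of the $\sigma^\perp$-component of $v$. The crucial calculation for the refinement $\sigma_2 \preceq \sigma$ is that, for $j \in I_1 \setminus I_\sigma$, one has $\langle v, b_j\rangle > 0$: the dual-basis relation gives $b_j \perp \operatorname{Span}(\sigma)$, so only the $\sigma^\perp \cap \operatorname{Span}(\sigma_1)$-component of $v$ contributes, and this component has a strictly positive $b_j$-coordinate by the outer-splitting choice of $\sigma_1$ (which places the displacement in the relative interior of the normal cone of $Q_\sigma$ at $Q_{\sigma_1}$). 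By Lemma \ref{arthur-lemma} this forces $I_2 \subseteq I_\sigma$, that is, $\sigma_2 \preceq \sigma$, placing $x$ in $R_{\sigma_1}^{\sigma_2}$ for the correct pair.

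For the reverse inclusion, take $x \in R_{\sigma_1}^{\sigma_2}$ with $\sigma_2 \preceq \sigma \preceq \sigma_1$ and decompose uniquely as $x = x_0 + v$ with $x_0 \in Q_{\sigma_1}$ and $v \in S_{\sigma_1}^{\sigma_2}$. For each facet $F \supset Q_\sigma$, the chain $Q_{\sigma_1} \subseteq Q_\sigma \subseteq F$ gives $\langle x_0, u^+_F\rangle = a_F$, while condition (3) of Lemma \ref{arthur-lemma} restricted to $i \in I_\sigma \subseteq I_1$ yields $\langle v, u^+_F\rangle < 0$; combining these gives $\langle x, u^+_F\rangle < a_F$, so $x \in T^-_{\Delta, \sigma}$. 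Disjointness of the $R_{\sigma_1}^{\sigma_2}$'s (up to measure-zero boundary) follows from the uniqueness at each of the two stages. The main obstacle is the combinatorial compatibility between the two stages: showing that the (a)+(b) criterion uniquely determines $\sigma_1$, and that the inner-partition $\sigma_2$ always refines to $\sigma_2 \preceq \sigma$. The acute assumption on the fan is essential throughout the positivity arguments, particularly in verifying condition (3) of Lemma \ref{arthur-lemma}; without it the partition can fail, consistent with the obtuse-cone caveat emphasized earlier.
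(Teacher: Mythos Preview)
Your two-stage decomposition (outer to select $\sigma_1$, inner to select $\sigma_2$ via Lemma~\ref{arthur-lemma}) is exactly the paper's strategy, just with the order of exposition reversed: the paper first fixes $\sigma_1$ and computes the inner union $A_{\sigma_1}^\sigma=\bigsqcup_{\sigma_2\preceq\sigma}R_{\sigma_1}^{\sigma_2}$ by running over all sign patterns of $\langle x-q,b_j\rangle$, $j\in I$, and then shows the $A_{\sigma_1}^\sigma$ partition $T^-_{\Delta,\sigma}$ via the nearest-face decomposition of $Q_\sigma$.

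One point needs tightening. Your outer selector (a)$+$(b) is not literally the nearest-face criterion, and you do not prove it picks out a unique $\sigma_1$. In the paper, nearest-face gives $x_0\in\operatorname{relint}(Q_{\sigma_1})$ together with $\langle v,b_j\rangle>0$ for $j\in I_1\setminus I$ (and $\leqslant 0$ for the complementary indices coming from larger cones); the inequalities $\langle v,w_i\rangle>0$ for $i\in I_1\setminus I$---your condition (b) on those indices---are then a \emph{consequence} of the $b_j$-positivity plus acuteness, not the selector itself. As written, your (a) allows $x_0$ on the boundary of $Q_{\sigma_1}$, and (b) alone does not supply the $b_j$-positivity that distinguishes $\sigma_1$ from an adjacent cone; so the uniqueness you flag as ``the main obstacle'' is genuinely unresolved in your sketch. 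Relatedly, the partition in \eqref{partition} is exact, not merely ``up to measure-zero boundary'': the pattern of strict versus non-strict inequalities in \eqref{S12} is arranged precisely so that the $S_{\sigma_1}^{\sigma_2}$ tile without overlap. If you replace your (a)$+$(b) by the nearest-face selector (equivalently, add the $b_j$-conditions to (a)) and drop the ``up to measure zero'' caveat, your argument coincides with the paper's.
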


\begin{proof} 
Consider the inner disjoint union in \eqref{partition} first.   
Fix $\sigma_1$ in $\Sigma$ with $\sigma \preceq \sigma_1$. 
Write $W(\sigma_1) = \left\{w_i \in V : i \in I_1 \right\}$ and assume that $I_2 \subseteq I \subseteq I_1$ 
are such that 
$W(\sigma) = \left\{w_i \in V : i \in I \right\}$ and similarly for $W(\sigma_2)$. 
Also, write $B(\sigma_1) = \left\{b_j \in V : j \in I_1 \right\}$. 
Notice that 
$b_j$ is normal to $\sigma$ for $j \in I_1 \setminus I$ and 
$b_j$ is normal to $\sigma_2$ for $j \in I_1 \setminus I_2$.

Simply considering all the subsets of $I$ we see that 
\[ 
A_{\sigma_1}^\sigma := 
\bigsqcup\limits_{\sigma_2 : \sigma_2\preceq\sigma \preceq \sigma_1}
R_{\sigma_1}^{\sigma_2} 
= 
\left\{ 
x \in V : 
\begin{array}{ll} 
\langle x-q , b_i \rangle > 0, & i \in I_1 \setminus I, \\
\langle x-q , w_i \rangle > 0 &  i \in I_1, \\ 
\end{array} 
\mbox{ for some } q \in Q_{\sigma_1}
\right\}.  
\] 
This is because for $q \in Q_{\sigma_1}$ the set $q+ S_{\sigma_1}^{\sigma_2}$ is, 
by \eqref{S12}, given by  
\[ 
\begin{array}{ll} 
\langle x-q , b_i \rangle > 0,  & i \in I_1 \setminus I_2 = (I_1 \setminus I) \sqcup (I \setminus I_2), \\ 
\langle x-q , b_i \rangle \leqslant0, & i \in I_2, \\ 
\langle x-q , w_i \rangle > 0,  & i \in I_1. \\ 
\end{array} 
\] 
In the disjoint union over all subsets $I_2$ of $I$ corresponding to the faces $\sigma_2$ of $\sigma$ 
the first set of inequalities for $i \in I_1 \setminus I$ are common for all the subsets $I_2$ 
and the remaining inequalities along with the second set of inequalities cover all possible signs for 
$\langle x-q , b_i \rangle$ for all $i \in I$.  Moreover, we have 
$\langle x-q , w_i \rangle > 0$ for  $i \in I_1$.  This proves our claim about the inner union 
and, in fact, already proves the lemma for the case when 
$\sigma$ is full dimensional since we only have the inner union in that case.  

Next, we consider the outer union. The assertion of the lemma now amounts to a nearest face partition. 
The set $T^-_{\Delta, \sigma}$ consists of $x \in V$ satisfying $\langle x-q , w_i \rangle > 0, i \in I$ for every 
$q \in Q_\sigma$.  Fix one such $x$.  There is a unique face $Q_{\sigma_1}$ of $\Delta$ with $\sigma \preceq \sigma_1$ 
such that the distance from $x$ to $Q_{\sigma_1}$ is smallest among all the faces contained in $Q_\sigma$.  
Note that the distances are controlled by the normal vectors $b_j$ and for the smallest distance to occur for 
the face $Q_{\sigma_1}$ of $Q_\sigma$, we must have 
$\langle x-q , b_j \rangle > 0$ for $j \in I_1 \setminus I$ and 
$\langle x-q , b_j \rangle \leqslant0$ for $j \in I_0 \setminus I_1$ 
for any $I_0 \supset I$ with $\sigma_0 \in \Sigma$ for some $q \in Q_{\sigma_1}$.  
Therefore, among the $A_{\sigma_1'}^\sigma$ with $\sigma \preceq \sigma_1'$, 
only $A_{\sigma_1}^\sigma$ contains $x$. 
Hence, \eqref{partition} holds. 
\end{proof} 

Let us also fix the following notation.  For $\sigma_2 \preceq \sigma_1$ in $\Sigma$, define the functions 
\begin{equation}  \label{K_sigma12}
K_{\sigma_1, \sigma_2}(x) = 
\sum_{\left\{\tau\in\Sigma \, : \, \sigma_2 \preceq \tau \preceq \sigma_1 \right\}} (-1)^{\dim(\tau)} K_\tau(x), 
\quad x \in V.  
\end{equation}

We are now prepared to state our first convergence result. 

\begin{theorem}[Absolute Convergence]   \label{th-comb-conv}
Let $\Sigma$ be a full dimensional, complete, simplicial, fan in $V$ which is assumed to be acute.
Let $\Delta \in \mathcal{P}(\Sigma)$ be a simple full dimensional polytope in $V$ whose normal fan is $\Sigma$. 
Suppose that a collection of functions $(K_\sigma)_{\sigma \in \Sigma}$ is given as in \eqref{K_sigma} and $k_\Delta$ 
is defined as in \eqref{equ-comb-truncation}. 

For each pair $\sigma_2 \preceq \sigma_1$ in $\Sigma$, assume that the function $K_{\sigma_1,\sigma_2}$ is 
absolutely integrable on the set $R_{\sigma_1}^{\sigma_2}$.    
Then  
\begin{equation}  \label{equ-J_Delta}
J_\Sigma(\Delta) := \int\limits_V k_\Delta(x) \, dx 
\end{equation}
is absolutely convergent. 
Recall that $R_{\sigma_1}^{\sigma_2}$ is defined by \eqref{R12} and $K_{\sigma_1,\sigma_2}$ by \eqref{K_sigma12}.  
\end{theorem}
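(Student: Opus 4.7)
My plan is to apply Lemma \ref{S-partition} to rewrite the truncated function $k_\Delta$ as a sum indexed by pairs $\sigma_2 \preceq \sigma_1$ of cones in $\Sigma$, with the characteristic function of $R_{\sigma_1}^{\sigma_2}$ appearing naturally and the coefficient being exactly $K_{\sigma_1, \sigma_2}$. Once this reorganization is in place, the hypothesis on absolute integrability of $K_{\sigma_1,\sigma_2}$ over $R_{\sigma_1}^{\sigma_2}$, combined with the triangle inequality and the finiteness of $\Sigma$, yields absolute convergence immediately.

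Explicitly, substituting the partition of Lemma \ref{S-partition} into the defining sum \eqref{equ-comb-truncation} gives
\[
k_\Delta(x) = \sum_{\sigma \in \Sigma} (-1)^{\dim \sigma}\, K_\sigma(x) \sum_{\substack{\sigma_1, \sigma_2 \in \Sigma \\ \sigma_2 \preceq \sigma \preceq \sigma_1}} {\mathbf 1}_{R_{\sigma_1}^{\sigma_2}}(x).
\]
Because $\Sigma$ is finite, I may freely interchange the summations: the set of triples $(\sigma_2, \sigma, \sigma_1)$ with $\sigma_2 \preceq \sigma \preceq \sigma_1$ is parameterized by first choosing the outer pair $(\sigma_1,\sigma_2)$ and then the intermediate cone $\sigma$. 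Performing this swap, and noting that $R_{\sigma_1}^{\sigma_2}$ is independent of $\sigma$, I obtain
\[
k_\Delta(x) = \sum_{\sigma_2 \preceq \sigma_1} \left( \sum_{\sigma_2 \preceq \sigma \preceq \sigma_1} (-1)^{\dim \sigma} K_\sigma(x) \right) {\mathbf 1}_{R_{\sigma_1}^{\sigma_2}}(x) = \sum_{\sigma_2 \preceq \sigma_1} K_{\sigma_1, \sigma_2}(x)\, {\mathbf 1}_{R_{\sigma_1}^{\sigma_2}}(x),
\]
where the second equality is just the definition \eqref{K_sigma12}.

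From this formula the triangle inequality yields the pointwise bound $|k_\Delta(x)| \leqslant \sum_{\sigma_2 \preceq \sigma_1} |K_{\sigma_1, \sigma_2}(x)|\, {\mathbf 1}_{R_{\sigma_1}^{\sigma_2}}(x)$, and integrating gives
\[
\int_V |k_\Delta(x)|\, dx \leqslant \sum_{\sigma_2 \preceq \sigma_1} \int_{R_{\sigma_1}^{\sigma_2}} |K_{\sigma_1, \sigma_2}(x)|\, dx.
\]
By hypothesis each integral on the right is finite, and since $\Sigma$ is a finite fan the outer sum is a finite sum of finite quantities; absolute convergence of $J_\Sigma(\Delta)$ follows.

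The combinatorial substance of the argument has already been absorbed into Lemma \ref{S-partition}, so the main obstacle here has effectively been dealt with in advance: what one must trust is that the double nearest-face partition genuinely packages the alternating signs and tangent cones correctly, and that the $(-1)^{\dim \sigma}$ signs line up with the definition of $K_{\sigma_1,\sigma_2}$. Once that is granted, no further combinatorial delicacy is needed; the present theorem reduces to a sign-sensitive regrouping followed by a one-line application of the triangle inequality. Acuteness of $\Sigma$ does not reappear explicitly in this step but enters the proof indirectly through Lemma \ref{S-partition}, which relies on Lemma \ref{arthur-lemma} and hence on the implication \eqref{acute-ineqs}.
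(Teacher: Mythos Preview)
Your proof is correct and follows essentially the same approach as the paper: invoke Lemma \ref{S-partition} to replace ${\mathbf 1}_{T^-_{\Delta,\sigma}}$ by the double sum of ${\mathbf 1}_{R_{\sigma_1}^{\sigma_2}}$, interchange the finite sums to collect the alternating $K_\sigma$ into $K_{\sigma_1,\sigma_2}$, and finish with the triangle inequality. Your additional remark pinpointing where acuteness enters (only through Lemma \ref{S-partition}) is accurate and not stated explicitly in the paper's proof.
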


\begin{proof} 
Recall that $k_\Delta(x)$ is defined in terms of outward tangent cones $T^-_{\Delta, \sigma}$. 
It follows from Lemma \ref{S-partition} that 
\begin{eqnarray*}  
k_\Delta(x) & = &  
\sum_{\sigma \in \Sigma}  (-1)^{\dim(\sigma)} K_\sigma(x) ~{\mathbf 1}_{T^-_{\Delta, \sigma}}(x) \\ 
& = & 
\sum\limits_{\sigma \in \Sigma} (-1)^{\dim(\sigma)} K_\sigma(x) 
\left( 
\sum\limits_{\sigma_1 : \sigma \preceq \sigma_1} 
\sum\limits_{\sigma_2 : \sigma_2 \preceq \sigma} 
{\mathbf 1}_{R_{\sigma_1}^{\sigma_2}}(x) 
\right) 
\\ 
& = & 
\sum\limits_{\sigma_2 \preceq \sigma_1} 
K_{\sigma_1,\sigma_2}(x) 
{\mathbf 1}_{R_{\sigma_1}^{\sigma_2}}(x). 
\end{eqnarray*} 
Hence, 
\[ 
\int\limits_V 
\left| k_\Delta(x) \right| \, dx 
\leqslant 
\sum\limits_{\left\{\sigma_1,\sigma_2\in\Sigma \, : \, \sigma_2 \preceq \sigma_1\right\}}~ 
\int\limits_{R_{\sigma_1}^{\sigma_2}}
\left| K_{\sigma_1,\sigma_2}(x) \right| \, dx   
\]
and each of the integrals on the right hand side is finite by assumption. 
Therefore, the integral on the left hand side is finite. 
\end{proof} 

A special case of Theorem \ref{th-comb-conv} is particularly suitable for applications to Arthur's non-invariant trace formula. 
To state it we review the following standard notions of  growth.

Let $\sigma$ be a cone in $V$.  A function $K: V \to \C$ is said to be \textit{of order $N$} 
in $\sigma$ if there is a constant $C = C_{K,N}$ such that 
\[ 
 |K(x)| \leq C \, |x|^N
\] 
for $x$ in $\sigma$ with $|x|$ sufficiently large. In other words, $K(x) = O(|x|^{N})$ as $x$ tends to $\infty$ in $\sigma$. 
We say $K$ is \textit{rapidly decreasing} on $\sigma$ if for every $N>0$ we have $K(x) = O(|x|^{-N})$ 
as  $x$ tends to $\infty$ in $\sigma$.

\begin{theorem} \label{th-growth-conv}
Let $\Sigma$ be a full dimensional, complete, simplicial, fan in $V$ which is assumed to be acute and 
let $(K_\sigma)_{\sigma\in\Sigma}$ be a collection of continuous functions as in \eqref{K_sigma}.  
Assume that the following two assumptions are satisfied: 
\begin{itemize} 
\item[(i)] 
For all $\sigma \in \Sigma$ the function $K_\sigma$ is constant in the direction of $\operatorname{Span}(\sigma)$ 
(i.e., a function on $\sigma^\perp$). 
\item[(ii)] 
For all pairs of cones $\sigma_2 \preceq \sigma_1$ in $\Sigma$ with the subset 
$S_{\sigma_1}^{\sigma_2}$ non-empty, the function  
$K_{\sigma_1,\sigma_2}$ is of order $N = -(n_1+\epsilon)$ for some $\epsilon > 0$ 
in every shifted neighborhood $B(y,\delta) + S_{\sigma_1}^{\sigma_2}$ for all $y \in V$  
where $B(y,\delta)$ is a (small) ball in $V$ of positive radius $\delta$ around $y$, and  $n_1 = \dim \sigma_1$. 
In particular, this condition is satisfied if 
$K_{\sigma_1,\sigma_2}$ is rapidly decreasing on the shifted neighborhoods.  
\end{itemize} 
Then for $\Delta \in \mathcal P(\Sigma)$ 
the integral \eqref{equ-J_Delta} defining $J_\Sigma(\Delta)$ 
converges absolutely. 
\end{theorem}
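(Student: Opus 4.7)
The plan is to deduce Theorem \ref{th-growth-conv} from Theorem \ref{th-comb-conv}. By the latter it suffices, for each pair $\sigma_2 \preceq \sigma_1$ in $\Sigma$, to establish the one-pair absolute integrability
\[
\int_{R_{\sigma_1}^{\sigma_2}} |K_{\sigma_1,\sigma_2}(x)| \, dx < \infty,
\]
so I would fix such a pair and work entirely with a single region $R_{\sigma_1}^{\sigma_2} = Q_{\sigma_1} + S_{\sigma_1}^{\sigma_2}$.

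First I would unpack the geometry of $R_{\sigma_1}^{\sigma_2}$. Because $\Sigma$ is the normal fan of $\Delta$, the face $Q_{\sigma_1}$ is compact and its affine hull is a translate of $\operatorname{Span}(\sigma_1)^\perp$, while $S_{\sigma_1}^{\sigma_2} \subseteq \operatorname{Span}(\sigma_1)$. Consequently every $x \in R_{\sigma_1}^{\sigma_2}$ has a unique orthogonal decomposition $x = q + s$ with $q \in Q_{\sigma_1}$ and $s \in S_{\sigma_1}^{\sigma_2}$, and Lebesgue measure factors as $dx = dq\, ds$ along the splitting $V = \operatorname{Span}(\sigma_1)^\perp \oplus \operatorname{Span}(\sigma_1)$. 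In particular, writing $n_1 = \dim\sigma_1$, the only ``unbounded directions'' of $R_{\sigma_1}^{\sigma_2}$ lie inside $\operatorname{Span}(\sigma_1)$, which is $n_1$-dimensional.

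Next I would invoke hypothesis (ii). Cover the compact set $Q_{\sigma_1}$ by finitely many open balls $B(y_1,\delta), \ldots, B(y_m,\delta)$ in $V$, so that
\[
R_{\sigma_1}^{\sigma_2} \;\subseteq\; \bigcup_{i=1}^m \bigl(B(y_i,\delta) + S_{\sigma_1}^{\sigma_2}\bigr).
\]
Hypothesis (ii) applied to each $y_i$ yields constants $C_i$ and radii $\rho_i$ such that $|K_{\sigma_1,\sigma_2}(x)| \leqslant C_i\, |x|^{-(n_1+\epsilon)}$ whenever $x \in B(y_i,\delta) + S_{\sigma_1}^{\sigma_2}$ and $|x| > \rho_i$. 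Setting $\rho = \max_i \rho_i$, I split the integral into the bounded piece $R_{\sigma_1}^{\sigma_2} \cap \overline{B(0,\rho)}$ and the tail $R_{\sigma_1}^{\sigma_2}\setminus \overline{B(0,\rho)}$. On the bounded piece $K_{\sigma_1,\sigma_2}$ is continuous on a compact set, hence its integral is finite. On the tail, the decomposition $x = q + s$ together with the boundedness of $Q_{\sigma_1}$ gives $|x| \geqslant \tfrac{1}{2}|s|$ once $|s|$ is sufficiently large. Combining this with Fubini, the tail integral is bounded by
\[
C \cdot \vol_{n-n_1}(Q_{\sigma_1}) \int_{S_{\sigma_1}^{\sigma_2} \cap \{|s|>\rho_0\}} |s|^{-(n_1+\epsilon)}\, ds
\]
for a suitable $\rho_0 > 0$. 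Since $S_{\sigma_1}^{\sigma_2}$ is contained in the $n_1$-dimensional space $\operatorname{Span}(\sigma_1)$, polar coordinates on $\operatorname{Span}(\sigma_1)$ bound the inner integral by a constant multiple of $\int_{\rho_0}^\infty r^{-1-\epsilon}\, dr$, which is finite because $\epsilon > 0$.

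The one step that I expect to require genuine care is the uniform comparison $|x| \gtrsim |s|$ over $q \in Q_{\sigma_1}$ on the tail: this is where compactness of $Q_{\sigma_1}$ together with the orthogonality of the splitting $V = \operatorname{Span}(\sigma_1)^\perp \oplus \operatorname{Span}(\sigma_1)$ are jointly used, and it is what makes the prescribed decay exponent $n_1 + \epsilon$ match precisely the dimension of the unbounded part of $R_{\sigma_1}^{\sigma_2}$. Hypothesis (i) plays no role in this convergence argument; it is listed because it is indispensable for the polynomiality theorem that follows.
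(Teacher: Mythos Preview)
Your proposal is correct and follows the same overall strategy as the paper: reduce to Theorem~\ref{th-comb-conv}, fix a pair $\sigma_2\preceq\sigma_1$, cover the compact face $Q_{\sigma_1}$ by finitely many balls to invoke hypothesis~(ii), and finish with polar coordinates in $\operatorname{Span}(\sigma_1)$ so that the decay exponent $n_1+\epsilon$ exactly beats the $n_1$-dimensional volume growth.

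The one genuine difference is that the paper inserts an extra decomposition of $\operatorname{Span}(\sigma_1)$ as $A\oplus\operatorname{Span}(\sigma_2)$ with $A=\sigma_2^\perp\cap\operatorname{Span}(\sigma_1)$, uses hypothesis~(i) to note that $K_{\sigma_1,\sigma_2}$ is constant along $\operatorname{Span}(\sigma_2)$, and then estimates the $\sigma_2$-extent of $S_{\sigma_1}^{\sigma_2}$ by a power $|x|^{n_2}$ before integrating over $A$; the final exponent count $N+n_2+(n_1-n_2-1)+1=-\epsilon$ is of course the same as yours. Your direct integration over the $n_1$-dimensional cone $S_{\sigma_1}^{\sigma_2}$ bypasses this step entirely, and in particular your observation that hypothesis~(i) is not actually needed for the convergence argument is correct---it is a hypothesis of the theorem only because it is indispensable for the subsequent polynomiality result.
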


\begin{proof} 
By Theorem \ref{th-comb-conv} it is enough to prove that the two assumptions in the statement imply that 
\[ 
\int\limits_{R_{\sigma_1}^{\sigma_2}} \left| K_{\sigma_1,\sigma_2}(x) \right| dx < \infty
\] 
for all pairs $\sigma_2 \preceq \sigma_1$ in $\Sigma$.

We may replace the domain of integration by its closure.  Also recall that 
the closure of $R_{\sigma_1}^{\sigma_2}$ is equal to  closure of $Q_{\sigma_1}$, which is compact, 
plus the closure of $S_{\sigma_1}^{\sigma_2}$, which can be given by making all the 
inequalities in \eqref{S12} non-strict. Note that $S_{\sigma_1}^{\sigma_2}$ is a cone, even though 
it may be non-simplicial.

To estimate the integral above, we apply Fubini's theorem to break the integral as three iterated integrals,  
an integral over $Q_{\sigma_1}$, an integral over $A = \sigma_2^\perp \cap \operatorname{Span}(\sigma_1)$, 
and a third integral in the direction of $\sigma_2$.  

Note that $\operatorname{Span}(\sigma_2)$ does not intersect 
$S_{\sigma_1}^{\sigma_2}$ because for any $x \in \operatorname{Span}(\sigma_2)$ the third set of 
inequalities in \eqref{arthur-lemma} for $i \in I_2$ and \eqref{acute-ineqs} imply that $x$ can not satisfy the second 
set of inequalities in \eqref{arthur-lemma}.  This observation and our first assumption imply that the contribution of the integral 
over $\sigma_2$ is bounded, up to a constant, by the product of the integrand with $|x|^{n_2},$ where $n_2 = \dim \sigma_2$. 
Hence, the integral above is bounded, up to a constant, by  
\[ 
\int\limits_{Q_{\sigma_1}} 
\int\limits_{A} 
\left| K_{\sigma_1,\sigma_2}(x) \right| |x|^{n_2} dx.  
\] 
Next, using the second assumption and the fact that $Q_{\sigma_1}$ is compact, we may cover the domain of integration by a finite 
number of shifted neighborhoods. Therefore, up to a constant, the integral over $A$, which is a cone of dimension $n_1 - n_2$ 
is bounded by 
\[ 
\int\limits_{A} 
|x|^{N+n_2} \, dx.  
\] 
The volume element on $A$ involves $|x|^{\dim A - 1}$ and $\dim A = n_1 - n_2$ which implies that the original integral 
is convergent if $N+n_2+(n_1-n_2-1)+1 = - \epsilon > 0$ which is clear. This proves the theorem. 
\end{proof}

We will give several examples of the convergence theorems later in Section \ref{sec-poly}.  
At the moment we mention the following example, which shows that the acute assumption in our convergence results is crucial. 

\begin{example} \label{obtuse} 
Consider the complete fan $\Sigma$ in $V=\R^2$ pictured in Figure \ref{fig-obtuse-fan}.  
In addition to zero, $\Sigma$ contains three one dimensional cones $\sigma_x$, $\sigma_y$, $\sigma_z$,   
as well as three two dimensional cones $\sigma_{xy}$, $\sigma_{xz}$, and $\sigma_{yz}$. 
Also, let $\Delta$ be a polytope whose normal fan is $\Sigma$ as indicated. 

For convenience, let us write $z = x+y$.  Define the collection of functions $(K_\sigma)_{\sigma \in \Sigma}$ 
as follows.  
\begin{itemize} 
\item $K_{xy} = K_{xz} = K_{yz} = 1$;
\item $K_x = K_x(y) = 1 + e^{-|y|}$; $K_y = K_y(x) = 1 + e^{-|x|}$; $K_z = K_z(x,y) = 1 + e^{-|z|}$; 
\item $K_0 = K_0(x,y) = e^{-|z|} + e^{-|x|} + e^{-|y|}$. 
\end{itemize} 
In Figure \ref{fig-obtuse-ex} we have indicated all the non-empty $R_{\sigma_1}^{\sigma_2}$. 
The truncated function $k_\Delta$ is the sum of the functions in the various regions indicated.  
A simple calculation shows that there are four regions where the integral of $|k_\Delta|$ is divergent.  
These regions are precisely those that are not of the form $R_{\sigma_1}^{\sigma_2}$ in this example, 
while on the other regions the hypotheses of Theorem \ref{th-growth-conv} clearly hold.  As it is evident from 
this example, the crucial Lemma \ref{S-partition} fails which leads to the failure of Theorem \ref{th-growth-conv} 
without the acute assumption. 

\begin{figure}[H]
\includegraphics[height=4cm]{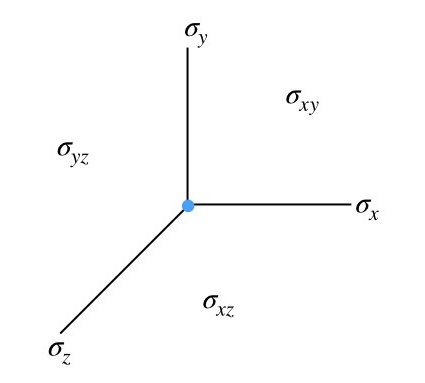}
\caption{An example of an obtuse fan, it is the normal fan of a right triangle.}
\label{fig-obtuse-fan}
\end{figure}

\begin{figure}[H]
\includegraphics[height=6cm]{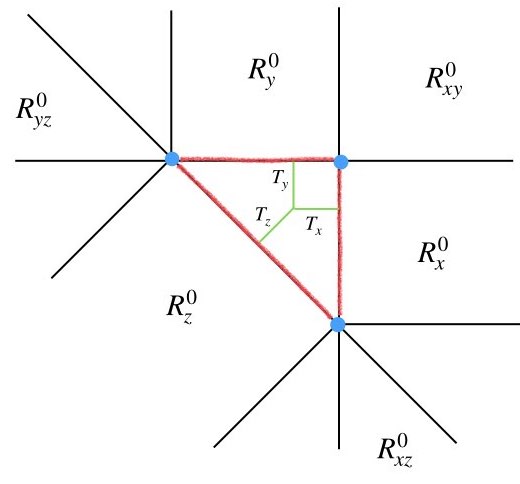}
\includegraphics[height=6cm]{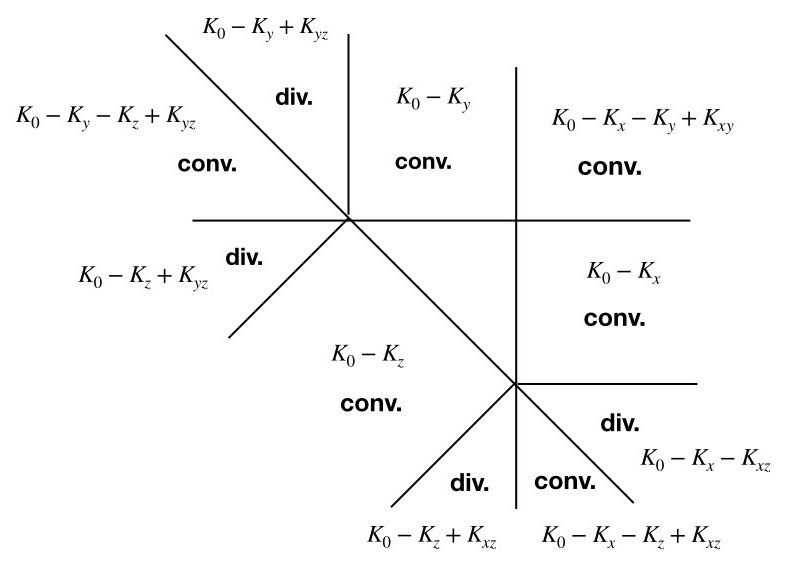}
\vspace{.5cm}
\caption{The regions $R_{\sigma_1}^{\sigma_2}$ and their corresponding $K_{\sigma_1, \sigma_2}$ functions}
\label{fig-obtuse-ex}
\end{figure}

\end{example}

We also prove the following lemma for later use in Section \ref{sec-poly}.  
Let $\tau$ be a cone in $\Sigma$.  
Recall from Section \ref{subsec-normal-fan} that $\Sigma / \tau$ denotes the fan 
consisting of all the images of the cones $\sigma \succeq \tau$ in the quotient vector space $V / \Span(\tau) \cong \tau^\perp$.
For $\sigma \succeq \tau$, let us denote the image of $\sigma$ in $V / \Span(\tau)$ by $\bar{\sigma}$. 
Note that by assumption, for any $\sigma \succeq \tau$, the function $K_\sigma$ is constant along $\Span(\tau)$ 
and hence induces a well-defined function 
$\bar{K}_{\bar{\sigma}}$ on $V / \Span(\tau)$.
\begin{lemma}   \label{lem-conv-Sigma/tau}
Suppose the conditions in Theorem \ref{th-growth-conv} for convergence are satisfied for the $K_\sigma$, $\sigma \in \Sigma$.  
Then for any $\tau \in \Sigma$, these conditions are also satisfied for the $\bar{K}_{\bar{\sigma}}$, $\bar{\sigma} \in \Sigma / \tau$, 
and hence $J_{\Sigma/\tau}(0)$ is convergent as well.
\end{lemma}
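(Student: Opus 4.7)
The plan is to verify the two hypotheses of Theorem \ref{th-growth-conv} for the quotient fan $\Sigma/\tau$ and the induced functions $\bar K_{\bar\sigma}$; once this is done, applying Theorem \ref{th-growth-conv} directly to the quotient (with the polytope $\bar\Delta = 0$, so that $R_{\bar\sigma_1}^{\bar\sigma_2} = S_{\bar\sigma_1}^{\bar\sigma_2}$) yields the convergence of $J_{\Sigma/\tau}(0)$. Condition (i) is immediate: for $\sigma \succeq \tau$ we have $\Span(\sigma) \supseteq \Span(\tau)$, so $K_\sigma$ descends to a function on $V/\Span(\tau)$ and its constancy along $\Span(\sigma)$ descends to constancy of $\bar K_{\bar\sigma}$ along $\Span(\bar\sigma) = \Span(\sigma)/\Span(\tau)$.

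The substantive work is condition (ii). First I would record the identity
\[
\bar K_{\bar\sigma_1,\bar\sigma_2}(\bar x) \;=\; (-1)^{\dim\tau}\, K_{\sigma_1,\sigma_2}(x),
\]
valid for any pair $\bar\sigma_2 \preceq \bar\sigma_1$ in $\Sigma/\tau$ (lifting uniquely to $\tau\preceq\sigma_2\preceq\sigma_1$) and any lift $x \in V$ of $\bar x \in V/\Span(\tau)$. This follows mechanically from the order-preserving bijection between the face intervals $[\sigma_2,\sigma_1]$ and $[\bar\sigma_2,\bar\sigma_1]$, together with $\dim\bar\tau' = \dim\tau' - \dim\tau$, substituted into \eqref{K_sigma12}. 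Thus the growth of $\bar K_{\bar\sigma_1,\bar\sigma_2}$ at $\bar x$ equals the growth of $K_{\sigma_1,\sigma_2}$ at the image of $\bar x$ under the embedding $\tau^\perp \hookrightarrow V$ coming from the fixed inner product.

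Next I would show the geometric containment $S_{\bar\sigma_1}^{\bar\sigma_2} \subseteq \overline{S_{\sigma_1}^{\sigma_2}}$ under this embedding. Granting this, pick any $s_0$ in the non-empty topological interior of the convex cone $S_{\sigma_1}^{\sigma_2}$; since its defining strict inequalities are preserved under adding an interior element, $\overline{S_{\sigma_1}^{\sigma_2}} + \lambda s_0 \subseteq S_{\sigma_1}^{\sigma_2}$ for every $\lambda > 0$, so
\[
B(\bar y,\bar\delta) + S_{\bar\sigma_1}^{\bar\sigma_2} \;\subseteq\; B(\bar y - \lambda s_0,\bar\delta) + S_{\sigma_1}^{\sigma_2}.
\]
Combining with the hypothesis for $\Sigma$ applied on the right-hand-side shifted neighborhood gives
\[
|\bar K_{\bar\sigma_1,\bar\sigma_2}(\bar x)| \;=\; |K_{\sigma_1,\sigma_2}(\bar x)| \;\leqslant\; C\,|\bar x|^{-(n_1+\epsilon)},
\]
which is at least as strong as $|\bar x|^{-(\dim\bar\sigma_1 + \bar\epsilon)}$ with $\bar\epsilon := \dim\tau + \epsilon > 0$, since $\dim\bar\sigma_1 = n_1 - \dim\tau$. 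This confirms condition (ii).

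The hard part will be the geometric containment $S_{\bar\sigma_1}^{\bar\sigma_2} \subseteq \overline{S_{\sigma_1}^{\sigma_2}}$. The inclusion $\bar s \in \Span(\sigma_1) \cap \sigma_1^\vee$ is automatic from $\Span(\bar\sigma_1) \subseteq \Span(\sigma_1)$ and membership of $\bar s$ in $\bar\sigma_1^\vee$ inside $\tau^\perp$. The delicate point is matching the nearest-face data, and here the acute assumption is essential: analyzing the KKT conditions for the Euclidean projection $p = \sum_i c_i w_i$ of $\bar s$ onto $\sigma_1$, the equation
\[
\sum_{j \in I_1} c_j \langle w_i,w_j\rangle \;=\; \langle\bar s,w_i\rangle \;=\; 0 \qquad (i \in I_\tau)
\]
becomes a non-negative sum equal to zero (using $\langle w_i, w_j\rangle \geqslant 0$), forcing $c_i = 0$ for every $i \in I_\tau$. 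A parallel comparison of the remaining KKT equations with the projection of $\bar s$ onto $\bar\sigma_1$ inside $\tau^\perp$ then identifies the support $\{i : c_i > 0\}$ with $I_1 \setminus I_2$, which yields the required containment and completes the proof.
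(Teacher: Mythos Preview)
Your proposal is correct and follows essentially the same approach as the paper: both verify the hypotheses of Theorem~\ref{th-growth-conv} for the quotient fan by relating $S_{\bar\sigma_1}^{\bar\sigma_2}$ to $S_{\sigma_1}^{\sigma_2}$ (the paper asserts the former equals the projection of the latter, you establish the containment $S_{\bar\sigma_1}^{\bar\sigma_2}\subseteq\overline{S_{\sigma_1}^{\sigma_2}}$ under the $\tau^\perp$-embedding, which suffices) and then transfer the growth bound. Your treatment is considerably more detailed than the paper's two-sentence sketch; the one place to tighten is the KKT step, where your displayed equation $\sum_j c_j\langle w_i,w_j\rangle=\langle\bar s,w_i\rangle$ is not a raw KKT identity but follows after complementary slackness: if $c_i>0$ then stationarity gives $\langle p,w_i\rangle=\langle\bar s,w_i\rangle=0$, and acuteness then forces the contradiction $c_i\langle w_i,w_i\rangle=0$.
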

\begin{proof}
This is an immediate corollary of the following two observations.
Let $\tau \preceq \sigma_2 \preceq \sigma_1$.  Then we have that (1) the cone $\mathcal{S}^{\bar{\sigma}_1}_{\bar{\sigma}_2}$ 
(as in the proof of Theorem \ref{th-comb-conv}) coincides with the image of $\mathcal{S}^{\sigma_1}_{\sigma_2}$ in $V / \Span(\tau)$;  
(2) the function $K_{\bar{\sigma}_1, \bar{\sigma}_2}$ (as in the statement of Theorem \ref{th-comb-conv}) is 
rapidly decreasing on a shifted neighborhood $\mathcal{S}^{\bar{\sigma}_1}_{\bar{\sigma}_2}$ because 
$K_{\sigma_1, \sigma_2}$ is rapidly decreasing on a shifted neighborhood of $\mathcal{S}^{\sigma_1}_{\sigma_2}$.
\end{proof}

Finally, we give a discrete version of Theorem \ref{th-growth-conv}. As usual let $N$ and $M$ be dual lattices and let $V=N_\R=N \otimes \R$ and 
$V^*=M_\R =M  \otimes  \R$ be the 
corresponding vector spaces respectively. We fix a perfect pairing $N \times N \to \Z$ and use it to identify $N$ and $M$ as well as $N_\R$ and $M_\R$. 
\begin{theorem} \label{th-growth-conv-discrete}
With notation and assumptions as in Theorem \ref{th-growth-conv}, the sum 
$$S_\Sigma(\Delta, M) =  \sum_{m \in M} k_\Delta(m),$$ is absolutely convergent. 
\end{theorem}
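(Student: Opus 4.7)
The plan is to mirror the argument for the continuous version (Theorems~\ref{th-comb-conv} and~\ref{th-growth-conv}) in the lattice setting, with integrals over $V$ replaced by sums over $M$. First I would apply Lemma~\ref{S-partition} exactly as in the proof of Theorem~\ref{th-comb-conv}: evaluating at $m \in M$ gives
\[
k_\Delta(m) \;=\; \sum_{\sigma_2 \preceq \sigma_1} K_{\sigma_1, \sigma_2}(m)\, \mathbf{1}_{R^{\sigma_1}_{\sigma_2}}(m),
\]
so that
\[
\sum_{m \in M} |k_\Delta(m)| \;\leq\; \sum_{\sigma_2 \preceq \sigma_1}\, \sum_{m \in M \cap R^{\sigma_1}_{\sigma_2}} |K_{\sigma_1, \sigma_2}(m)|,
\]
and it is enough to show that each inner lattice sum is finite.

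Next I would compare each lattice sum to the corresponding integral handled in Theorem~\ref{th-growth-conv}. Fix $\sigma_2 \preceq \sigma_1$ and let $U$ be a bounded fundamental parallelepiped of $M$ containing the origin. By continuity of $K_{\sigma_1,\sigma_2}$ together with the polynomial order bound in hypothesis (ii) of Theorem~\ref{th-growth-conv}, one has
\[
|K_{\sigma_1,\sigma_2}(m)| \;\leq\; C_1 \sup_{y \in m+U} |K_{\sigma_1,\sigma_2}(y)|
\]
with a constant $C_1$ independent of $m$ (shifting by a bounded amount changes $|m|$ by a bounded factor when $|m|$ is large, and a finite ball contains only finitely many lattice points). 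This yields
\[
\sum_{m \in M \cap R^{\sigma_1}_{\sigma_2}} |K_{\sigma_1, \sigma_2}(m)| \;\leq\; \frac{C_1}{\vol(U)} \int_{R^{\sigma_1}_{\sigma_2} + U} |K_{\sigma_1,\sigma_2}(x)|\, dx.
\]
The enlarged region $R^{\sigma_1}_{\sigma_2} + U$ can be written as $Q'_{\sigma_1} + S^{\sigma_1}_{\sigma_2}$ with $Q'_{\sigma_1} := Q_{\sigma_1} + U$ still compact, and hypothesis (ii) holds on every shifted neighborhood of $S^{\sigma_1}_{\sigma_2}$, so the same Fubini/dimension-count argument used in the proof of Theorem~\ref{th-growth-conv} shows this integral is finite.

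Summing the finite bounds over the finitely many pairs $\sigma_2 \preceq \sigma_1$ in $\Sigma$ then gives the required absolute convergence of $S_\Sigma(\Delta, M)$. The only subtlety is the sum-to-integral comparison: one must ensure that the enlargement $R^{\sigma_1}_{\sigma_2} + U$ still falls under the decay hypothesis of Theorem~\ref{th-growth-conv}. This is essentially automatic because condition (ii) is formulated in terms of \emph{any} shifted neighborhood of $S^{\sigma_1}_{\sigma_2}$, and replacing the compact $Q_{\sigma_1}$ by $Q_{\sigma_1} + U$ is precisely such a shift. Alternatively, one can avoid integral comparison altogether and redo the Fubini argument directly for sums: using rationality of $\Sigma$ to find a full-rank sublattice of $M$ adapted to the filtration $\operatorname{Span}(\sigma_2) \subset \operatorname{Span}(\sigma_1) \subset V$ (the ambient $M$ is a finite union of translates of such a product sublattice), then using hypothesis (i) to see that along the $\operatorname{Span}(\sigma_2)$ direction the sum at each cross-section contributes only a factor of order $|m|^{n_2}$, and finally invoking the elementary fact that $\sum_{m} |m|^{-(n_1 - n_2) - \epsilon}$ over a rank-$(n_1 - n_2)$ lattice converges.
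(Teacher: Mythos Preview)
Your proposal is correct and matches the paper's approach: the paper's proof consists of the single sentence ``In the proof of Theorem~\ref{th-growth-conv} replace all integrals $\int_A f(x)\,dx$ with sums $\sum_{m \in A \cap M} f(m)$,'' which is precisely your alternative of redoing the Fubini argument directly for lattice sums. Your primary route via sum-to-integral comparison also works in spirit, though the displayed inequality should bound $|K_{\sigma_1,\sigma_2}(m)|$ by a constant times the average of the polynomial majorant $(1+|x|)^{-(n_1+\epsilon)}$ over $m+U$ rather than by the average of $|K_{\sigma_1,\sigma_2}|$ itself (the sup over $m+U$ dominates, not the other way around).
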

\begin{proof}
In the proof of Theorem \ref{th-growth-conv} replace all integrals $\int\limits_A f(x)dx$ with sums $\sum\limits_{m \in A \cap M}  f(m)$.
\end{proof}

We should note that the discrete analogue of Lemma \ref{lem-conv-Sigma/tau} also holds with the same proof.

\section{Polynomiality}  \label{sec-poly}
In this section we prove the following theorems.

\begin{theorem}[Polynomiality] \label{th-polynomiality} 
Let $\Sigma$ be a full dimensional, complete, simplicial, fan in $V$ which is assumed to be acute. 
Let $(K_\sigma)_{\sigma\in\Sigma}$ be a collection of continuous functions 
satisfying assumptions (i) and (ii) in Theorem \ref{th-growth-conv}.  
Then $$J_\Sigma(\Delta) = \int\limits_V k_\Delta(x) dx$$ is a polynomial function on 
$\cP(\Sigma)$, i.e., a polynomial in the support numbers of $\Delta$. 
\end{theorem}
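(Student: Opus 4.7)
The plan is to reduce $J_\Sigma(\Delta)$ to a finite $\R$-linear combination of volumes of virtual polytopes whose support numbers are affine functions of the support numbers of $\Delta$, and then appeal to the volume polynomial theorem (Theorem \ref{th-vol-poly}, extended to virtual polytopes via Remark \ref{rem-vol-virt-poly}).

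First I would reuse the rearrangement already carried out in the proof of Theorem \ref{th-comb-conv}, namely
$$
J_\Sigma(\Delta) \;=\; \sum_{\sigma_2 \preceq \sigma_1} \int_{R_{\sigma_1}^{\sigma_2}} K_{\sigma_1,\sigma_2}(x)\,dx,
$$
and observe that the cone $S_{\sigma_1}^{\sigma_2}$ and the alternating sum $K_{\sigma_1,\sigma_2}$ depend only on $\Sigma$ and on $(K_\tau)_\tau$, so that the entire $\Delta$-dependence of a summand sits in the compact face $Q_{\sigma_1}$ that shifts $S_{\sigma_1}^{\sigma_2}$. I would then exploit invariance hypothesis (i) via Fubini adapted to the flag $\Span(\sigma_2) \subset \Span(\sigma_1) \subset V$: for $\sigma_2 \preceq \tau \preceq \sigma_1$ each $K_\tau$ is constant along $\Span(\tau)$, so the innermost integration produces constants, and the remaining integral lives on a quotient space controlled by the projection of $R_{\sigma_1}^{\sigma_2}$.

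The decisive step is to repackage the double sum over pairs $\sigma_2 \preceq \sigma_1$ into a single sum indexed by cones $\sigma \in \Sigma$ whose contributions are literal volumes. This is exactly what Lemma \ref{lem-Gamma-Delta-sigma} is designed to provide: using the Lawrence--Varchenko decomposition for virtual polytopes (Theorem \ref{th-LV-virtual}) and the Khovanskii--Pukhlikov convex-chain formalism of Section \ref{sec-convex-chains}, it identifies virtual polytopes $\Gamma_{\Delta,\sigma}$, one for each $\sigma \in \Sigma$, whose defining support numbers are affine in those of $\Delta$. Plugging this decomposition in should yield an identity of the form
$$
J_\Sigma(\Delta) \;=\; \sum_{\sigma \in \Sigma} c_\sigma \cdot \vol\bigl(\Gamma_{\Delta,\sigma}\bigr),
$$
where the coefficients $c_\sigma$ are $\Delta$-independent integrals of the $K_{\sigma_1,\sigma_2}$ over combinatorial pieces of the fan (compatible with the formula for $J_\Sigma(0)$ recorded at the end of the Introduction). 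The volume polynomial theorem for virtual polytopes then makes each $\vol(\Gamma_{\Delta,\sigma})$ a polynomial in the support numbers of $\Delta$, and hence so is $J_\Sigma(\Delta)$.

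The main obstacle is the repackaging step. The natural alternating sums of outward tangent cones truncated by $\Delta$ are not characteristic functions of honest polytopes, so one cannot apply the classical Lawrence--Varchenko or Brianchon--Gram decompositions directly; the whole point of Theorem \ref{th-LV-virtual} is to make the decomposition rigorous inside the convex-chain algebra, and Lemma \ref{lem-Gamma-Delta-sigma} is the technical heart that bridges these conical identities with the partition coming from Lemma \ref{S-partition}. A secondary but essential subtlety is verifying that the constants $c_\sigma$ are genuinely $\Delta$-independent: this rests squarely on hypothesis (i), and the one-dimensional toy example with $K_0 = K_\pm = e^x$ already shows that polynomiality fails the moment (i) is dropped.
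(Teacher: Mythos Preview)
Your proposal identifies the right destination and the right tools---the virtual polytopes $\Gamma_{\Delta,\sigma}$, Lemma \ref{lem-Gamma-Delta-sigma}, and the volume polynomial for virtual polytopes---and your target identity
\[
J_\Sigma(\Delta)=\sum_{\sigma\in\Sigma} c_\sigma\,\vol(\Gamma_{\Delta,\sigma})
\]
is exactly what the paper proves, with $c_\sigma=(-1)^{\dim\sigma}J_{\Sigma/\sigma}(0)$. But your route to it through the $R_{\sigma_1}^{\sigma_2}$ partition is a detour that does not obviously close, and the paper does not take it.

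The paper's proof never passes through the nearest-face partition of Lemma \ref{S-partition}. Instead it uses Corollary \ref{cor-Gamma}---the M\"obius inverse of Lemma \ref{lem-Gamma-Delta-sigma} in the incidence algebra of the face poset---which expresses each characteristic function $\mathbf{1}_{T^-_{\Delta,\sigma}}$ as $\sum_{\tau\preceq\sigma}(-1)^{\dim\tau}\mathbf{1}_{C_\sigma^\tau}\gamma_{\Delta,\tau}$. Substituting this directly into the \emph{definition} of $k_\Delta$ and swapping the order of summation immediately produces a sum over $\tau\in\Sigma$ of products of two factors: one, $\gamma_{\Delta,\tau}$, constant along $\tau^\perp$; the other, $\sum_{\sigma\succeq\tau}(-1)^{\dim\sigma}K_\sigma\,\mathbf{1}_{C_\sigma^\tau}$, constant along $\Span(\tau)$ by hypothesis (i). Fubini then splits the integral cleanly into $\vol(\Gamma_{\Delta,\tau})$ times $J_{\Sigma/\tau}(0)$, whose convergence is handled by Lemma \ref{lem-conv-Sigma/tau}. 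The ``repackaging obstacle'' you flag is dissolved not by massaging the $R_{\sigma_1}^{\sigma_2}$ sum but by never introducing it: the inversion (Corollary \ref{cor-Gamma}) is what makes Lemma \ref{lem-Gamma-Delta-sigma} usable, and you should invoke it rather than the lemma itself. Your Fubini over the flag $\Span(\sigma_2)\subset\Span(\sigma_1)\subset V$ is heading in a reasonable direction, but reconstituting the convex chains $\gamma_{\Delta,\tau}$ from the pieces $R_{\sigma_1}^{\sigma_2}$ would require reproving the incidence-algebra inversion in a more cumbersome form.
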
 

We also prove a discrete version of the above polynomiality result. 
Let $N$ and $M$ be dual lattices with $V=N_\R$ and $V^*=M_\R$ the corresponding vector spaces. 
We fix a perfect $\Z$-pairing $N \times N \to \Z$ and use it to identify $N$ and $M$. 
Recall that $\cP(\Sigma, M)$ denotes the collection of polytopes with normal fan $\Sigma$ whose vertices lie in $M$.

\begin{theorem}   \label{th-polynomiality-discrete}
Let the notations and assumptions be as in Theorem \ref{th-polynomiality}. Then 
\[ 
S_{\Sigma}(\Delta) = \sum_{m \in  M}  k_\Delta(m) 
\] 
is a polynomial function on $\cP(\Sigma, M)$.  
\end{theorem}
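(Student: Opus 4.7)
The plan is to follow the strategy of the continuous Polynomiality Theorem (Theorem \ref{th-polynomiality}), replacing the integral $\int_V \cdot \, dx$ by the lattice sum $\sum_{m \in M} \cdot$, and the volume polynomial (Theorem \ref{th-vol-poly}) by the Ehrhart polynomial (Theorem \ref{th-Ehrhart-poly}) together with its McMullen generalization (Theorem \ref{th-McMullen}). Absolute convergence of $S_\Sigma(\Delta, M)$, which makes all the rearrangements below legitimate, is supplied by Theorem \ref{th-growth-conv-discrete}.

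First, applying the double nearest-face partition of Lemma \ref{S-partition} to the definition \eqref{equ-comb-truncation} of $k_\Delta$, I would rewrite the series as
\[
S_\Sigma(\Delta, M) \;=\; \sum_{\sigma_2 \preceq \sigma_1 \text{ in } \Sigma}\; \sum_{m \in R_{\sigma_1}^{\sigma_2} \cap M} K_{\sigma_1,\sigma_2}(m),
\]
in complete analogy with the decomposition appearing in the proof of Theorem \ref{th-comb-conv}. Next, following the same combinatorial reshuffling as in the continuous case, whose key input is Lemma \ref{lem-Gamma-Delta-sigma} obtained from the Lawrence--Varchenko conical decomposition for virtual polytopes (Theorem \ref{th-LV-virtual}), one re-expresses $S_\Sigma(\Delta, M)$ as a finite linear combination, indexed by cones $\sigma \in \Sigma$, of lattice-point counts of certain virtual lattice polytopes $\Gamma_{\Delta,\sigma}$ (or of their lattice slices), weighted by sums of $K_\sigma$ along the complementary lattice directions.

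The invariance hypothesis (i) of Theorem \ref{th-growth-conv}, that $K_\sigma$ is constant in the directions of $\Span(\sigma)$, lets these weights factor as constants independent of $\Delta$: each such weight is a (convergent) lattice sum along $\Span(\sigma)^\perp \cap M$ of the descended function $\bar K_\sigma$ on the quotient, where convergence follows by transporting Lemma \ref{lem-conv-Sigma/tau} to the discrete setting. The remaining $\Delta$-dependence is then entirely captured by quantities of the form $|\Gamma_{\Delta,\sigma} \cap M_\sigma|$ for appropriate sublattices $M_\sigma$ of $M$. By the Ehrhart polynomial extended to virtual lattice polytopes (Remark \ref{rem-vol-virt-poly}), or equivalently by McMullen's Theorem \ref{th-McMullen} applied to the $\Z$-invariant valuation $P \mapsto |P \cap M_\sigma|$, each such count is a polynomial in the support numbers $a_\rho$ of $\Delta$. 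Summing the finitely many contributions over $\sigma \in \Sigma$ yields the desired polynomial function on $\cP(\Sigma, M)$.

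The main obstacle, and the point where the discrete case genuinely deviates from the continuous one, is to ensure that the virtual polytopes $\Gamma_{\Delta,\sigma}$ actually belong to $\cV(M_\sigma)$, so that the Ehrhart/McMullen polynomiality is available. This requires the rationality of the fan $\Sigma$ (which is part of the setup of Theorem \ref{th-polynomiality-discrete}) together with the lattice hypothesis on the vertices of $\Delta$, and a verification that the combinatorial construction of $\Gamma_{\Delta,\sigma}$ from Lemma \ref{lem-Gamma-Delta-sigma} preserves integrality of the support numbers. A secondary subtlety is that, after the factoring step, the relevant lattice sums may live naturally on a translated lattice $a + M_\sigma$ rather than on $M_\sigma$ itself; here Remark \ref{rem-McMullen} is tailor-made, since it guarantees polynomiality of $P \mapsto |P \cap (a + M_\sigma)|$, so the final step carries over without modification.
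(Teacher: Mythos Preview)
Your approach is essentially the same as the paper's, and you correctly identify the key ingredients: Corollary \ref{cor-Gamma} (derived from Lemma \ref{lem-Gamma-Delta-sigma} via Lawrence--Varchenko), the Ehrhart/McMullen polynomiality for the $\Gamma_{\Delta,\tau}$ factors, the discrete analogue of Lemma \ref{lem-conv-Sigma/tau} for the convergence of the $\Delta$-independent weights, and the translated-lattice subtlety resolved by Remark \ref{rem-McMullen}.

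There is, however, a confusion in your first step. The double nearest-face partition (Lemma \ref{S-partition}) and the resulting expression $\sum_{\sigma_2 \preceq \sigma_1} \sum_{m \in R_{\sigma_1}^{\sigma_2} \cap M} K_{\sigma_1,\sigma_2}(m)$ play no role in the polynomiality argument, neither in the continuous case nor here; that decomposition is used only for convergence (Theorems \ref{th-comb-conv} and \ref{th-growth-conv}). The paper's proof of Theorem \ref{th-polynomiality} starts directly from the definition of $k_\Delta$, applies Corollary \ref{cor-Gamma} to each ${\mathbf 1}_{T^-_{\Delta,\sigma}}$, swaps the order of summation over $\sigma$ and $\tau$, and then factors using the invariance hypotheses. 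There is no route from the $R_{\sigma_1}^{\sigma_2}$ expression back to the $\gamma_{\Delta,\tau}$ expression, so you should simply drop that step and begin with Corollary \ref{cor-Gamma}.

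On the lattice-splitting issue you flag as ``secondary'': the paper makes this completely explicit. For each $\tau \in \Sigma$ one sets $M_1 = \Span(\tau) \cap M$ and $M_2 = \tau^\perp \cap M$; since $M_1 + M_2$ need only be of finite index in $M$, one fixes a finite set $M'$ of coset representatives and writes each $m \in M$ uniquely as $m' + m_1 + m_2$. The sum then becomes a \emph{finite} sum over $m' \in M'$ of products
\[
\Big(\sum_{m_2 \in M_2} \sum_{\sigma \succeq \tau} (-1)^{\dim\sigma} K_\sigma(m'+m_2)\,{\mathbf 1}_{C_\sigma^\tau}(m'+m_2)\Big)\cdot\Big(\sum_{m_1 \in M_1} \gamma_{\Delta,\tau}(\pi(m')+m_1)\Big),
\]
where $\pi$ is orthogonal projection onto $\Span(\tau)$. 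The first factor is $S_{\Sigma/\tau}(0)$ for the shifted functions $K_\sigma(m'+\cdot)$ and is independent of $\Delta$; the second is polynomial in the support numbers of $\Delta$ by Remark \ref{rem-McMullen}, exactly as you anticipated.
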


A key step in the proof of Theorem \ref{th-polynomiality}  is a combinatorial lemma (Lemma \ref{lem-Gamma-Delta-sigma}) 
which we deduce as a corollary of the Lawrence-Varchenko conical decomposition (Theorem \ref{th-LV-virtual}).  
The notion of a virtual polytope naturally appears here (see Section \ref{sec-convex-chains}).  
The proof of Theorem \ref{th-polynomiality-discrete} is slight modification of the proof of Theorem \ref{th-polynomiality}.  
We give the proofs in Section \ref{ProofPoly} below after some preparation.  Let us give some examples first.

\begin{example}[Brianchon-Gram] \label{ex-BG}
Let $\Sigma$ be a simplicial fan in  $V$ with $\Delta \in  \mathcal{P}(\Sigma)$ a polytope normal to $\Sigma$.  
Let $K_\sigma \equiv 1$, $\forall \sigma \in \Sigma$. The  combinatorial truncation $k_\Delta$ in this case is given by 
$$k_\Delta = \sum_{\sigma \in \Sigma}  (-1)^{\dim(\sigma)} {\mathbf 1}_{T^-_{\Delta, \sigma}}.$$
By the Brianchon-Gram theorem (Theorem \ref{th-BG-alt}), we have  $$k_\Delta = {\mathbf 1}_{\Delta}.$$

For any pair of cones $\sigma_1 \preceq \sigma_2$ in $\Sigma$ we have 
$$K_{\sigma_1, \sigma_2} = \sum_{\left\{\tau\in\Sigma \, : \, \sigma_2 \subseteq \tau \subseteq \sigma_1 \right\}} (-1)^{\dim(\tau)} = 0 $$
by the binomial identity $\sum_{k=0}^n (-1)^k {n \choose k} = 0$. Thus, the conditions in Theorem  \ref{th-growth-conv} are satisfied.  
Moreover, the $K_\sigma$ are constant and hence the assumptions in the polynomiality theorem are also satisfied.  
Thus we recover the polynomiality of the volume function $\Delta  \mapsto \vol(\Delta)$ (see Theorem \ref{th-vol-poly}).  
\end{example}

\begin{example}[Rectangle]  \label{ex-rectangle}
We consider the fan $\Sigma$ in $V =\R^2$ as in Figure \ref{fig-rectangle}, consisting of one dimensional cones 
$\sigma_x$ and $\sigma_y$ and their opposites, as well as the two dimensional cone $\sigma_{xy}$ and its counterparts 
for the other three quadrants. 
We also have the cone $\{0\}$. 
The fan $\Sigma$ is normal to the rectangle $\Delta$ with support numbers $T_1, T_2, T'_1, T'_2$ as indicated.
\begin{figure}[H]
\includegraphics[height=5cm]{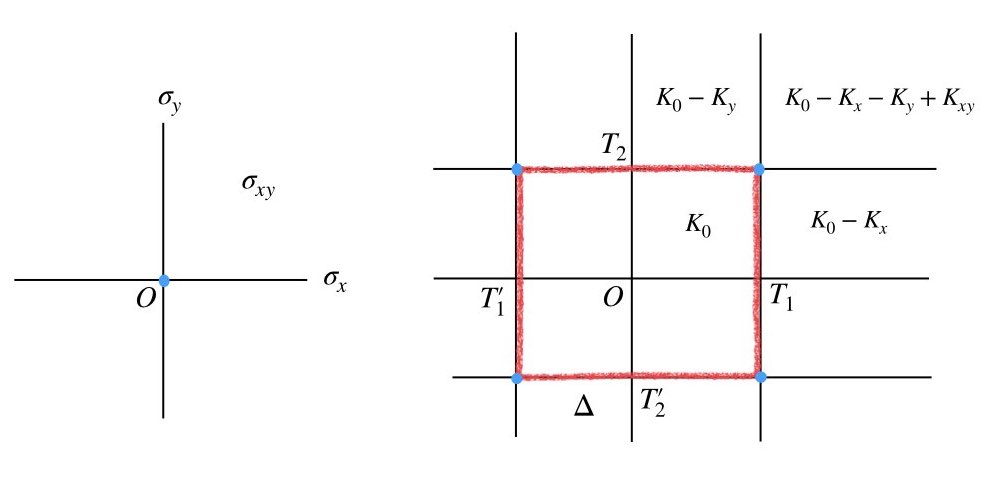}
\caption{}
\label{fig-rectangle}
\end{figure}
Let $f(x,y)$ be an absolutely integrable function on $\R^2$ with $f_{++}$ denoting the value of its integral over the first quadrant. 
Similarly let the values of its integral over the other quadrants be denoted by $f_{+-}$, $f_{-+}$, $f_{--}$. 
Also, let $g(x)$ and $h(y)$ be absolutely integrable functions on $\R$ with their integrals over $[0,\infty)$ denoted by $g_+$ and $h_+$ 
and their integrals over $(-\infty,0]$ denoted by $g_-$ and $h_-$, respectively.  Finally, let $k$ denote a constant. 
   
We assign the following functions to the cones in $\Sigma$: 
\begin{itemize}
\item $K_0(x, y) = f(x,y) + g(x) + h(y) + k$,
\item $K_{\sigma_{\pm x}}(x,y) = h(y) + k$,
\item $K_{\sigma_{\pm y}}(x,y) = g(x) + k$, and 
\item $K_\sigma(x,y) = k$ for all two dimensional cones $\sigma$ in $\Sigma$.
\end{itemize}
Notice that the conditions (i) and (ii) of Theorem \ref{th-growth-conv} are clearly satisfied.  

Let us calculate $J_\Sigma(\Delta)$. Because of the symmetry in this example, it is enough to consider a quarter of the picture. 
We have 
\begin{align*}
& \int_0^{T_1} \int_0^{T_2} \left(f(x,y) + g(x) + h(y) + k\right) \, dy \, dx + \int_0^{T_1} \int_{T_2}^\infty \left(f(x,y)+h(y)\right) \, dy \, dx \\ 
& + \int_{T_1}^\infty \int_{0}^{T_2} \left(f(x,y) + g(x)\right) \, dy \, dx + \int_{T_1}^\infty \int_{T_2}^\infty f(x,y) \, dy \, dx \\ 
&= f_{++} + g_+ T_2 + h_+ T_1 + k T_1 T_2,\\ 
\end{align*}
which is a polynomial of degree $2$ in $T_1$ and $T_2$.  Adding similar contributions from the other three quadrants we arrive at 
\[ 
J_\Sigma(\Delta) = 
k \, (T_1 + T'_1) (T_2 + T'_2) + h_+ \, T_1 + g_- \, T'_1 + g_+ \, T_2 + g_- \, T'_2 + (f_{++} + f_{+-} + f_{-+} + f_{--}). 
\] 
\end{example}

\subsection{An extension of Langlands combinatorial lemma}
As before $V$ is an $n$-dimensional real vector space. We fix an inner product $\langle \cdot, \cdot \rangle$ on $V$ 
and identify $V$ with its dual space $V^*$.  Let $\Sigma$ be a full dimensional, complete, simplicial, fan in $V$ 
and let $\Delta \in \mathcal{P}(\Sigma)$ be a full dimensional simple polytope with normal fan $\Sigma$.  
Since we identified $V$ and $V^*$, we take both $\Sigma$ and $\Delta$ to lie in $V$.

Let $\sigma \in \Sigma$ be a cone. First we consider the case where $\sigma$ is full dimensional.  
Let $v_\sigma$ be the corresponding vertex of $\Delta$. 
Let $W = \{w_1, \ldots, w_n\}$ (respectively $B = \{b_1, \ldots, b_n\}$) be the set of edge vectors of $\sigma$ 
(respectively, of $\sigma^\vee$).  Then the $b_i$ (respectively, the $w_j$) are the inward facet normals to 
$\sigma$ (respectively $\sigma^\vee$), 
and the cone $\sigma$ is given by inequalities as 
\[ 
\sigma = \left\{ x : \langle x, b_i \rangle \geqslant0,~i=1, \ldots, n \right\}. 
\] 
Also the inward-looking tangent cone $T^+_{\Delta, \sigma}$ at the vertex $v_\sigma$ is given by
\[ 
T^+_{\Delta, \sigma} = \left\{ x : \langle x, w_i \rangle \leqslant \langle v_\sigma, w_i \rangle,~ i=1, \ldots, n \right\}. 
\] 
We consider the oriented hyperplanes corresponding to the union of these two sets of inequalities: 
\begin{align} 
H_{b_i, 0} &= \left\{x : \langle x, b_i \rangle = 0 \right\}, & i=1, \ldots, n,  \nonumber  \\
H_{w_i, \langle v_\sigma, w \rangle} &= 
\left\{x : \langle x, w_i \rangle = \langle v_\sigma, w \rangle \right\}, &  i=1, \ldots, n.  
\label{equ-Gamma-Delta-sigma}
\end{align}
If $v_\sigma$ lies in $\sigma$ then the hyperplanes in \eqref{equ-Gamma-Delta-sigma} are the facets of the polytope 
$\Delta \cap \sigma$ oriented outward. In general, $v_\sigma$ may not lie in $\sigma$.

\begin{definition}   \label{def-Gamma-Delta-sigma}
We denote the virtual polytope in $V$ determined by the oriented hyperplanes in \eqref{equ-Gamma-Delta-sigma} by 
$\Gamma_{\Delta, \sigma}$. 
We denote the convex chain corresponding to $\Gamma_{\Delta, \sigma}$ by $\gamma_{\Delta, \sigma}$.
\end{definition}

See Section \ref{sec-convex-chains} for a review of the notions of virtual polytope and convex chain.  
Also see Figure \ref{fig-cube} for a three dimensional example of $\Gamma_{\Delta,\sigma}$ 
and Figure \ref{fig-ex-Gamma} for a pair of two dimensional examples of the virtual polytope 
$\Gamma_{\Delta, \sigma}$ 
and its convex chain $\gamma_{\Delta, \sigma}$.

In this section we consider the Lawrence-Varchenko conical decomposition for the virtual polytope 
$\Gamma_{\Delta, \sigma}$ (Theorem \ref{th-LV-virtual}). We will see that this recovers and extends 
some of the key combinatorial lemmas appearing in Arthur's work (e.g. \cite{arthur-annals}).  
As a special case we immediately recover the Langlands combinatorial lemma (see \cite[Section I.8, p. 46]{arthur-clay} 
and \cite[Appendix B]{gkm}). In addition, we interpret the Langlands combinatorial lemma as a formula for 
the inverse of a distinguished element in the incidence algebra of poset of faces of 
$\sigma$ (see Section \ref{sec-incidence-alg}).

Recall that for $\tau \preceq \sigma$, the largest face of $\sigma^\vee$ orthogonal to $\tau$ is denoted by 
$\tau^*$ and we have $\dim\tau + \dim\tau^* = n$ (Section \ref{subsec-fan}). It follows that the intersection 
$\Span(\tau) \cap (v_\sigma + \Span(\tau^*))$ is a single point which can be shown to be a vertex 
$v_\tau$ of $\Gamma_{\Delta, \sigma}$. In fact, we will see below that $\tau \mapsto v_\tau$ gives 
a one-to-one correspondence between the faces of $\sigma$ and the vertices of $\Gamma_{\Delta, \sigma}$. 
The vertex corresponding to the zero dimensional face $0$ is $0$ itself. On the other hand, the vertex 
corresponding to the whole $\sigma$ is the vertex $v_\sigma$ of $\Delta$. 

\begin{figure}
\includegraphics[height=4.5cm]{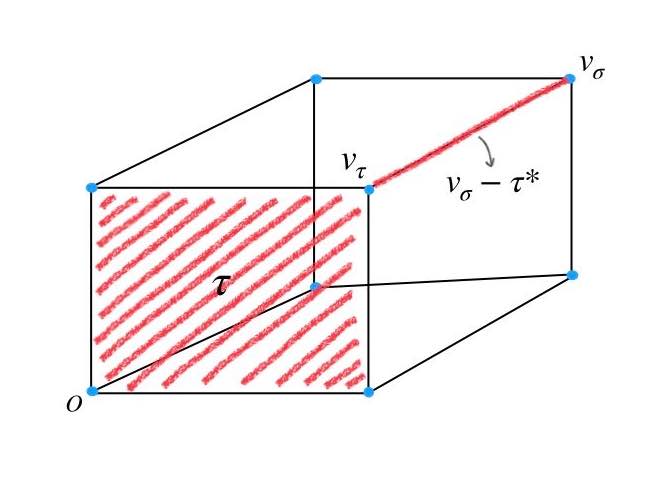}
\caption{A three dimensional example where $\Gamma_{\Delta, \sigma}$ is a cube.  
A face $\tau$ (of $\sigma$) and its corresponding dual face $\tau^*$ (of $\sigma^\vee$) 
and the vertex $v_\tau$ (of $\Gamma_{\Delta, \sigma}$) are illustrated.}
\label{fig-cube}
\end{figure}

For a face $\tau \preceq \sigma$  let $W(\tau) \subset W$ (respectively $B(\tau) \subset B$) 
be the subset of edge vectors of $\tau$ (respectively $\tau^*$). Thus 
$$\tau = \{ x \in \sigma : \langle x, b \rangle = 0,~ b \in B(\tau) \}.$$
The vertex $v_\tau$ is then the unique solution of the system of equations 
\[ 
\begin{cases} 
\langle x, w \rangle = \langle v_\sigma, w \rangle, & \forall w \in W(\tau), \\ 
\langle x, b \rangle = 0, & \forall b \in B(\tau). \\ 
\end{cases} 
\] 
And the inward tangent cone $T^+_{\Gamma_{\Delta, \sigma}, v_\tau}$ at the vertex $v_\tau$ is given by the inequalities 
\[ 
T^+_{\Gamma_{\Delta, \sigma}, v_\tau} =
\left\{x \in V : 
\begin{array}{ll}
\langle x, w \rangle \leqslant \langle v_\sigma, w \rangle, & \forall w \in W(\tau) \\ 
\langle x, b \rangle \geqslant0, & \forall b \in B(\tau)  
\end{array}
\right\}. 
\] 
Thus the set of outward facet normals of $\Gamma_{\Delta, \sigma}$ at $v_\tau$ is $W(\tau) \cup -B(\tau)$.
In other words, the cone in the normal fan of $\Gamma_{\Delta, \sigma}$ corresponding to the vertex 
$v_\tau$ is generated by the set of vectors 
$W(\tau) \cup -B(\tau)$ (Section \ref{subsec-normal-fan}). 

Consider the nearest face partition corresponding to $\sigma$ (Section  \ref{subsec-nearest-face}). 
That is, for each face $\tau$ let $V_\sigma^\tau$ 
be the set of points $x \in V$ whose shortest distance to $\sigma$ is attained at a point in the relative interior of 
$\tau$. Since $\sigma$ is a 
cone, each $V_\sigma^\tau$ is a full dimensional cone. 
Moreover, the closures of the cones $V_\sigma^\tau$, $\tau \preceq \sigma$, are the maximal cones of 
a complete simplicial fan in $V$ which we call the 
\emph{nearest face fan} of $\sigma$. 
The following is straightforward to verify.
\begin{proposition}   \label{prop-nearest-face-fan}
In the nearest face fan of $\sigma$, 
the cone corresponding to a face $\tau \preceq \sigma$ is the convex cone generated by the set of vectors 
$W(\tau) \cup -B(\tau)$.
\end{proposition}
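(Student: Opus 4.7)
The plan is to identify $\overline{V_\sigma^\tau}$ via the orthogonal projection $\pi_\sigma \colon V \to \sigma$ and compute the normal cone of $\sigma$ at points of $\tau^\circ$. Let $I \subseteq \{1,\ldots,n\}$ index the edges of $\tau$, so $W(\tau) = \{w_i : i \in I\}$; the duality $\langle w_i, b_j\rangle = \delta_{ij}$ together with $\tau^* = \sigma^\vee \cap \tau^\perp$ forces $B(\tau) = \{b_j : j \notin I\}$.

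First I would observe that, since $\sigma$ is closed and convex, the definition of $V_\sigma^\tau$ is equivalent to $\pi_\sigma(x) \in \tau^\circ$. By Kolmogorov's characterization of the projection, this is in turn equivalent to a decomposition $x = \pi + n$ with $\pi \in \tau^\circ$ and $n$ in the outward normal cone $N_\sigma(\pi)$. Next I would compute $N_\sigma(y)$ for $y \in \tau^\circ$. Using the facet description $\sigma = \{z : \langle z, b_j\rangle \geq 0,\ j=1,\ldots,n\}$, note that $\langle y, b_j\rangle > 0$ for $j \in I$ while $\langle y, b_j\rangle = 0$ for $j \notin I$; hence the tangent cone of $\sigma$ at $y$ is $\{v : \langle v, b_j\rangle \geq 0,\ j \notin I\}$, whose polar cone is $\operatorname{Cone}(-b_j : j \notin I) = -\tau^*$. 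Therefore $N_\sigma(y) = -\tau^*$.

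Combining these two steps gives $V_\sigma^\tau = \tau^\circ + (-\tau^*)$, and passing to closures yields
\[
\overline{V_\sigma^\tau} = \tau + (-\tau^*) = \operatorname{Cone}(W(\tau)) + \operatorname{Cone}(-B(\tau)) = \operatorname{Cone}(W(\tau) \cup -B(\tau)),
\]
which is the stated identification. For full rigor one should double-check that any sum $x = \sum_{i \in I} a_i w_i - \sum_{j \notin I} c_j b_j$ with $a_i, c_j \geq 0$ does project into $\tau$; setting $\pi = \sum_{i \in I} a_i w_i$, this reduces via Kolmogorov's inequality $\langle x-\pi, y-\pi\rangle \leq 0$ for $y = \sum_k \beta_k w_k \in \sigma$ to checking $-\sum_{j \notin I} c_j \beta_j \leq 0$, which is immediate from $\langle w_i, b_j\rangle = \delta_{ij}$. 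The only mild obstacle is bookkeeping around the partition $\{I, I^c\}$ so that $N_\sigma(y) = -\tau^*$ emerges with the correct sign; beyond that the argument is elementary convex analysis for simplicial cones, and it is consistent with the fact that $W(\tau) \cup B(\tau)$ is a basis of $V$, so $\operatorname{Cone}(W(\tau) \cup -B(\tau))$ is a full-dimensional simplicial cone, as expected for a maximal cone of a complete fan.
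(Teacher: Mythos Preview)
Your argument is correct. The paper itself does not supply a proof of this proposition; it merely records that the statement is ``straightforward to verify'' and moves on, so there is no paper proof to compare against in detail. Your route via the metric projection $\pi_\sigma$ and the identification of the normal cone $N_\sigma(y) = -\tau^*$ for $y \in \tau^\circ$ is exactly the kind of verification the authors presumably had in mind: it reduces the claim to the standard fact that for a closed convex set $C$ one has $\pi_C(x) = p$ if and only if $x - p \in N_C(p)$, together with the explicit facet description of the simplicial cone $\sigma$. The bookkeeping you flag (that $B(\tau) = \{b_j : j \notin I\}$ and that $\langle y, b_j\rangle$ vanishes precisely for $j \notin I$ when $y \in \tau^\circ$) is handled correctly, and the passage to the closure $\overline{\tau^\circ - \tau^*} = \tau - \tau^*$ is valid because the Minkowski sum of two polyhedral cones is closed.
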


Since the $V_\sigma^\tau$ partition the whole space $V$, the above proposition shows that the union of the cones generated by 
$W(\tau) \cup -B(\tau)$, $\tau \preceq \sigma$, is $V$. This then implies that the normal fan of $\Gamma_{\Delta, \sigma}$ 
coincides with the nearest fan of $\sigma$. In particular, 
the $v_\tau$ are all of the vertices of $\Gamma_{\Delta, \sigma}$. 
In other words, $\tau \mapsto v_\tau$ gives a one-to-one correspondence between the faces of $\sigma$ 
and the vertices of $\Gamma_{\Delta, \sigma}$.

\begin{figure}
\includegraphics[height=5.2cm]{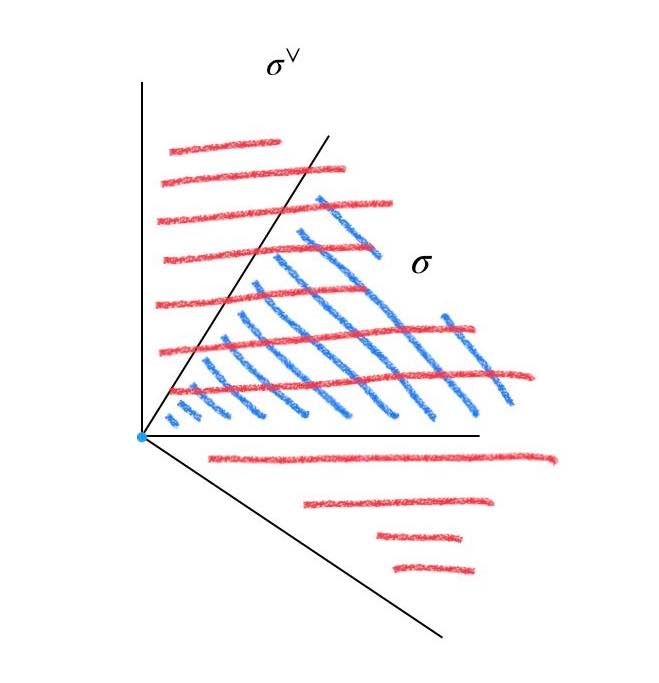}
\includegraphics[height=5cm]{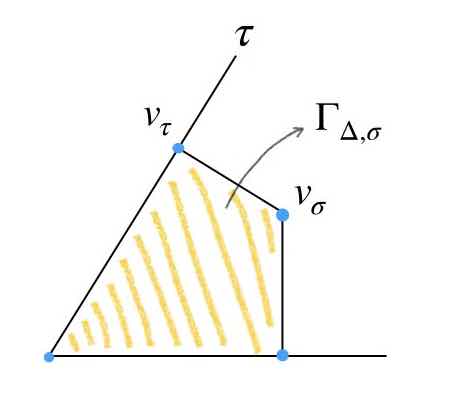}
\includegraphics[height=5.2cm]{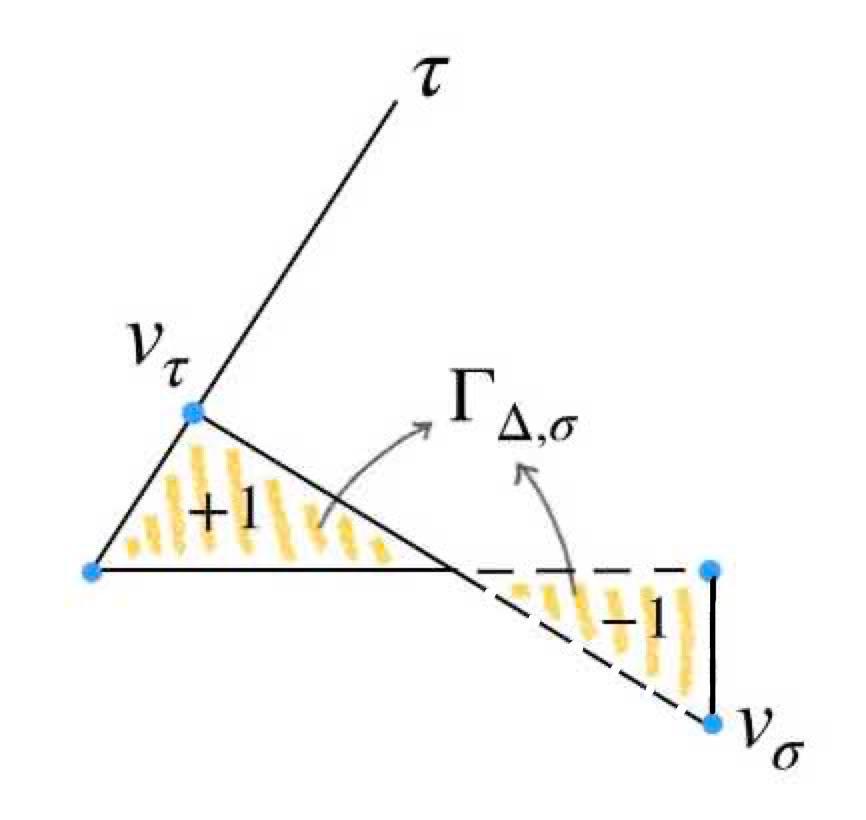}
\caption{Two examples of the virtual polytopes $\Gamma_{\Delta, \sigma}$. In the first example the vertex 
$v_\sigma$ lies in the cone $\sigma$ and $\Gamma_{\Delta, \sigma}$ is an actual polytope (a quadrangle). 
The convex chain $\gamma_{\Delta, \sigma}$ is the characteristic function of the quadrangle. In the second 
example, $v_\sigma$ lies outside $\sigma$ and $\Gamma_{\Delta, \sigma}$ is a virtual quadrangle.  
The convex chain $\gamma_{\Delta, \sigma}$ is the function which has values $1$ and $-1$ in the two 
shaded regions respectively.}
\label{fig-ex-Gamma}
\end{figure}
Now take a vector $\xi$ in $\sigma^\circ \cap (\sigma^\vee)^\circ$, that is, 
$$\langle \xi, b \rangle > 0,~ \forall b \in B,$$
$$\langle \xi, w \rangle > 0,~ \forall w \in W.$$
Note that since $\sigma \neq V$, we know $(\sigma^\circ)^\vee + \sigma^\circ \neq V$ and hence 
$\sigma^\circ \cap (\sigma^\vee)^\circ = ((\sigma^\circ)^\vee + \sigma^\circ)^\vee \neq \emptyset$.

Let $T^\xi_{\Gamma_{\Delta, \sigma}, v_\tau}$ be the polarized tangent cone at the vertex $v_\tau$ appearing in 
the Lawrence-Varchenko decomposition of $\Gamma_{\Delta, \sigma}$ relative to the vector $\xi$ 
(see Section \ref{sec-conical-decompose}). By construction, the edge vectors of $T^\xi_{\Gamma_{\Delta, \sigma}, v_\tau}$ 
are $\pm$ the edge vectors of the tangent cone of $\Gamma_{\Delta, \sigma}$ at $v_\tau$ so that the minimum of 
$\langle \xi, \cdot \rangle$ on $T^\xi_{\Gamma_{\Delta, \sigma}, v_\tau}$ is attained at the vertex $v_\tau$.  
Since the inner product of $\xi$ with any vector in $W \cup B$ is positive, it follows that the set of inward facet normals of 
$T^\xi_{\Gamma_{\Delta, \sigma}, v_\tau}$ is exactly $W(\tau) \cup B(\tau)$. More precisely, 
$T^\xi_{\Gamma_{\Delta, \sigma}, v_\tau}$ is defined  by the inequalities 
\begin{equation} \label{equ-T-xi}
T^\xi_{\Gamma_{\Delta, \sigma}, v_\tau} = 
\left\{ x \in V : 
\begin{array}{ll}
\langle x, w \rangle > \langle v_\sigma, w \rangle, & \forall w \in W(\tau), \\ 
\langle x, b \rangle \geqslant 0, & \forall b \in B(\tau), 
\end{array} 
\right\}.
\end{equation}
On the other hand, 
let $C_\sigma^\tau$ be the inward looking tangent cone of $\sigma$ at $\tau$. It is the cone defined as 
$$C_\sigma^\tau = \{ x \in V : \langle x, b \rangle \geqslant 0,~\forall b \in B(\tau)\}.$$
It follows from \eqref{equ-T-xi} that $T^\xi_{\Gamma_{\Delta, \sigma}, v_\tau}$ can be written as 
$$T^\xi_{\Gamma_{\Delta, \sigma}, v_\tau} = C_\sigma^\tau \cap T^-_{\Delta, \tau}.$$

If $\sigma$ is not full dimensional, we can repeat the above, replacing $\Delta$ with $\Delta \cap \Span(\sigma)$.  
Then $\gamma_{\Delta, \sigma}$ is a convex chain supported on $\Span(\sigma)$. We extend $\gamma_{\Delta, \sigma}$ 
to the whole $V$ by requiring it to be constant along $\sigma^\perp$. 
Now applying the Lawrence-Varchenko theorem to the virtual polytope $\Gamma_{\Delta, \sigma}$ and the vector $\xi$ 
as above we obtain the following conical decomposition for $\Gamma_{\Delta, \sigma}$.

\begin{lemma}  \label{lem-Gamma-Delta-sigma}
With notation as above, let $\gamma_{\Delta, \sigma}$ be the convex chain associated to the virtual polytope 
$\Gamma_{\Delta, \sigma}$. 
We have 
\begin{equation}   \label{equ-Delta-tilde-sigma}
\gamma_{\Delta, \sigma} = \sum_{\tau \preceq \sigma} (-1)^{\dim \tau} {\mathbf 1}_{C_\sigma^\tau} {\mathbf 1}_{T^-_{\Delta, \tau}}.
\end{equation}
\end{lemma}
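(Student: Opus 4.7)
The plan is to deduce the formula as a direct specialization of the Lawrence--Varchenko decomposition for virtual polytopes (Theorem \ref{th-LV-virtual}) applied to $\Gamma_{\Delta,\sigma}$, using the direction vector $\xi\in\sigma^\circ\cap(\sigma^\vee)^\circ$ introduced above. The geometric identifications needed have already been established in the discussion preceding the statement: the vertices of $\Gamma_{\Delta,\sigma}$ are the points $v_\tau$ indexed by faces $\tau\preceq\sigma$, and the polarized tangent cones satisfy $T^\xi_{\Gamma_{\Delta,\sigma},v_\tau}=C_\sigma^\tau\cap T^-_{\Delta,\tau}$. Substituting these into \eqref{LV-vir} gives
\[
\gamma_{\Delta,\sigma}=\sum_{\tau\preceq\sigma}(-1)^{n_{v_\tau}}{\mathbf 1}_{C_\sigma^\tau}{\mathbf 1}_{T^-_{\Delta,\tau}},
\]
and the only remaining task is to identify the sign $(-1)^{n_{v_\tau}}$ with $(-1)^{\dim\tau}$.

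For this I would use a general principle about the Lawrence--Varchenko construction at a simple vertex: the number $n_v$ of edges flipped equals the number of defining facet inequalities that become strict in passing from $T^+_{P,v}$ to $T^\xi_{P,v}$. Indeed, if $w'_i$ are the polarized edge vectors and $n'_i$ are the corresponding inward facet normals, normalized so that $\langle w'_i,n'_j\rangle=c_i\delta_{ij}$ with $c_i>0$, then the expansion $x-v=\sum_j\lambda_jw'_j$ satisfies $\lambda_i=c_i^{-1}\langle x-v,n'_i\rangle$, so the condition $\lambda_i>0$ is exactly the strict version of $\langle x-v,n'_i\rangle\geqslant 0$. Reading off the explicit presentation \eqref{equ-T-xi}, the strict inequalities at $v_\tau$ are precisely those indexed by $W(\tau)$, and there are $|W(\tau)|=\dim\tau$ of them. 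Hence $n_{v_\tau}=\dim\tau$, which finishes the case of full-dimensional $\sigma$.

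For $\sigma$ not full dimensional, I would invoke the reduction already described in the preamble: run the above argument inside $\Span(\sigma)$ with $\Delta$ replaced by $\Delta\cap\Span(\sigma)$, and then extend to $V$ by requiring the resulting convex chain to be constant along $\sigma^\perp$. This is consistent with the right-hand side of \eqref{equ-Delta-tilde-sigma} because both $C_\sigma^\tau$ and $T^-_{\Delta,\tau}$ are themselves cylinders over $\sigma^\perp$: the defining normals $B(\tau)$ lie in $\Span(\sigma)$, and the inward facet normals of $\Delta$ for facets $F\supset Q_\tau$ lie in $\Span(\tau)\subseteq\Span(\sigma)$ since such facets correspond to rays of $\Sigma$ contained in $\tau$. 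The main point of friction in the whole argument is keeping careful track of the correspondence between flipped edges and strict facet inequalities in the polarized cone; once that bookkeeping is in place the rest is immediate, and Lawrence--Varchenko delivers the formula directly.
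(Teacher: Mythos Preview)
Your proposal is correct and follows essentially the same route as the paper: apply the Lawrence--Varchenko decomposition (Theorem \ref{th-LV-virtual}) to $\Gamma_{\Delta,\sigma}$ with the direction $\xi\in\sigma^\circ\cap(\sigma^\vee)^\circ$, use the identifications $T^\xi_{\Gamma_{\Delta,\sigma},v_\tau}=C_\sigma^\tau\cap T^-_{\Delta,\tau}$ from the preamble, compute $n_{v_\tau}=|W(\tau)|=\dim\tau$, and handle the non-full-dimensional case by extension along $\sigma^\perp$. Your justification of $n_{v_\tau}=\dim\tau$ via the correspondence between flipped edges and strict facet inequalities in \eqref{equ-T-xi} is a bit more explicit than the paper's one-line assertion, but the argument is the same.
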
   
\begin{proof}
First, we note that the number $n_{v_\tau}$ of the edges flipped in the polarized tangent cone 
$T^\xi_{\Gamma_{\Delta, \sigma}, v_\tau}$ is equal to $|W(\tau)| = \dim \tau$.
The above discussion then proves the case where $\sigma$ is full dimensional. If $\sigma$ is not full dimensional, 
all the cones considered in the right hand side of \eqref{equ-Delta-tilde-sigma} above should be extended 
in the orthogonal direction $\sigma^\perp$. This finishes the proof.
\end{proof}

Letting $\Delta = \{0\}$ we recover a combinatorial lemma of Langlands.

\begin{corollary}[Langlands combinatorial lemma]   \label{cor-Langlands-comb-lem}
Let $\sigma \subset V$ be a convex polyhedral cone.  
The following identities hold. 
\begin{equation}   \label{equ-8-10}
\sum_{\tau \preceq \tau' \preceq \sigma} (-1)^{\dim \tau + \dim \tau'} {\mathbf 1}_{C_\sigma^{\tau'}} {\mathbf 1}_{C_{\tau^*}^{\tau'^*}} = 
\begin{cases} 
1, & \mbox{ if } \tau = \sigma, \\ 
0 & \mbox{ if }  \tau \neq \sigma 
\end{cases}
\end{equation}

\begin{equation}  \label{equ-8-11}
\sum_{\tau \preceq \tau' \preceq \sigma} (-1)^{\dim \tau' + \dim \tau} {\mathbf 1}_{C_\tau^{\tau'}} {\mathbf 1}_{C_{\sigma^*}^{\tau'^*}} = 
\begin{cases} 
1, & \mbox{ if } \tau = \sigma, \\ 
0, & \mbox{ if } \tau \neq \sigma. 
\end{cases}
\end{equation}

Alternatively, consider the incidence algebra of the poset of faces of $\sigma$ with ring of scalars $R$ being 
the ring of all real-valued functions on $V$ (see Section \ref{sec-incidence-alg} and Example \ref{ex-faces-poset}).  
Define the elements $F$, $G$ of the incidence algebra by 
\begin{align*}
F(\tau, \tau') &= (-1)^{\dim \tau} {\mathbf 1}_{C_\tau^{\tau'}},\\ 
G(\tau, \tau') &= (-1)^{\dim \tau} {\mathbf 1}_{C_{\tau^*}^{\tau'^*}}.
\end{align*}
Equations \eqref{equ-8-10} and \eqref{equ-8-11} state that $F$ and $G$ are inverses of each other in the incidence algebra, that is,  
\begin{equation}  \label{equ-F-G-inverse}
(F * G)(\tau, \sigma) = (G * F)(\tau, \sigma) = \delta(\tau, \sigma). 
\end{equation}
\end{corollary}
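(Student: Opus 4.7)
The plan is to specialize Lemma~\ref{lem-Gamma-Delta-sigma} to the degenerate polytope $\Delta=\{0\}$. In this limit all vertices $v_\tau$ of $\Gamma_{\Delta,\sigma}$ collapse to the origin, the virtual polytope $\Gamma_{\{0\},\sigma}$ becomes the single point $\{0\}$, and the associated convex chain is $\gamma_{\{0\},\sigma}={\mathbf 1}_{\{0\}}$. Meanwhile each outward tangent cone $T^-_{\Delta,\tau}$ degenerates to $\{x\in V:\langle x,w\rangle>0,\; w\in W(\tau)\}$, which under the identification $V\cong V^{*}$ is the relative interior of the dual-face tangent cone $C^{\tau^{*}}_{\sigma^{\vee}}$. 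Substituting into~\eqref{equ-Delta-tilde-sigma} yields~\eqref{equ-8-10} in the particular case $\tau=0$ (with $\sigma$ arbitrary).

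To handle a general face $\tau\preceq\sigma$, I apply the same Lemma not to $\sigma$ but to the quotient cone $\sigma/\tau$ inside $V/\Span(\tau)\cong\Span(\tau)^{\perp}$, again with the degenerate $\Delta=\{0\}$. The faces of $\sigma/\tau$ are exactly the images of the $\tau'$ satisfying $\tau\preceq\tau'\preceq\sigma$, and under the canonical identification $(\sigma/\tau)^{\vee}=\tau^{*}$ (both lying inside $\Span(\tau)^{\perp}$) the dual face corresponding to $\tau'/\tau$ is precisely $\tau'^{*}$. Consequently $C^{\tau'/\tau}_{\sigma/\tau}$ extends to $C^{\tau'}_{\sigma}$ in $V$, while the open dual cone of $\tau'/\tau$ in the quotient pulls back (up to measure-zero boundary) to $C^{\tau'^{*}}_{\tau^{*}}$. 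The Lemma's identity in the quotient therefore becomes exactly~\eqref{equ-8-10} for the given $\tau$ after orthogonal pullback. Identity~\eqref{equ-8-11} is the mirror statement; it is obtained in the same manner after exchanging the roles of $\sigma$ and $\sigma^{\vee}$ and using the involution $\tau\leftrightarrow\tau^{*}$ on the face poset of $\sigma$.

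Once both identities are established, the incidence-algebra reformulation is a mechanical unpacking: one writes $(F\ast G)(\tau,\sigma)=\sum_{\tau\preceq\tau'\preceq\sigma} F(\tau,\tau')\,G(\tau',\sigma)$ using the definitions of $F$ and $G$, and identifies the resulting sum with the left-hand side of~\eqref{equ-8-10}; similarly $(G\ast F)(\tau,\sigma)$ matches the left-hand side of~\eqref{equ-8-11}. The equalities with $\delta(\tau,\sigma)$ then say precisely that $F$ and $G$ are two-sided inverses in the incidence algebra $I(\mathcal{P})$.

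The main technical obstacle I anticipate is the careful bookkeeping of open-versus-closed (and half-open polarized) tangent cones in the degeneration $\Delta\to\{0\}$. The Lawrence-Varchenko decomposition underlying Lemma~\ref{lem-Gamma-Delta-sigma} naturally produces half-open polarized cones, whereas \eqref{equ-8-10} and~\eqref{equ-8-11} are phrased in terms of the closed cones $C^{\tau'}_{\sigma}$ and $C^{\tau'^{*}}_{\tau^{*}}$. The two viewpoints agree as convex chains modulo measure-zero boundary, but verifying the identity pointwise on $V$---especially on the strata where several faces meet simultaneously---requires a consistent choice of conventions and a compatibility check under the orthogonal pullback from $V/\Span(\tau)$ to $V$. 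Everything else is formal, following from the cone/dual-cone dualities recalled in Section~\ref{subsec-fan}.
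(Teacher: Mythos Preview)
Your approach is essentially the same as the paper's: specialize Lemma~\ref{lem-Gamma-Delta-sigma} to $\Delta=\{0\}$ for the base case, reduce general $\tau$ to this (the paper simply says ``without loss of generality $\tau=0$,'' which amounts to your quotient argument), obtain \eqref{equ-8-11} by replacing $\sigma$ with $\sigma^\vee$, and then observe that \eqref{equ-F-G-inverse} is a direct rewriting. The open-versus-closed boundary issue you flag is real but the paper does not address it either; its proof is even terser than yours on this point.
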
 

\begin{proof}
Firstly, to prove \eqref{equ-8-10}, we can assume without loss of generality that $\tau=0$. 
Equation \eqref{equ-8-10} is then an immediate consequence of \eqref{equ-Delta-tilde-sigma} 
when we let $\Delta = \{0\}$. To obtain \eqref{equ-8-11}, we apply \eqref{equ-8-10} to $\sigma^\vee$ 
in place of $\sigma$. Finally, \eqref{equ-F-G-inverse} is a rewriting of \eqref{equ-8-10} and \eqref{equ-8-11} 
using the language of incidence algebra.
\end{proof}

\begin{corollary}   \label{cor-Gamma}
With notation as before, we have 
\begin{equation}  \label{equ-equ-Delta-tilde-tangent-cone}
{\mathbf 1}_{T^-_{\Delta, \sigma}} = \sum_{\tau \preceq \sigma} (-1)^{\dim \tau} {\mathbf 1}_{C_{\sigma}^{\tau}} \, \gamma_{\Delta, \tau}.
\end{equation}
\end{corollary}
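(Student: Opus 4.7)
The plan is to derive Corollary \ref{cor-Gamma} as a M\"obius-style inversion of Lemma \ref{lem-Gamma-Delta-sigma}, with the Langlands combinatorial lemma (Corollary \ref{cor-Langlands-comb-lem}) providing precisely the inversion needed. In incidence-algebra terms, if we set $A(\tau,\sigma) = (-1)^{\dim\tau}{\mathbf 1}_{C_\sigma^\tau}$ for intervals $\tau \preceq \sigma$, then Lemma \ref{lem-Gamma-Delta-sigma} exhibits the family $(\gamma_{\Delta,\sigma})_{\sigma \in \Sigma}$ as the convolution of $A$ against $({\mathbf 1}_{T^-_{\Delta,\tau}})_{\tau \in \Sigma}$, while the corollary asserts the formally identical statement with the roles of $\gamma_{\Delta,\tau}$ and ${\mathbf 1}_{T^-_{\Delta,\sigma}}$ swapped. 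It therefore suffices to show that $A$ is an involution in the incidence algebra of the face poset of $\sigma$, i.e., $A \ast A = \delta$.

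Concretely, I would substitute the formula from Lemma \ref{lem-Gamma-Delta-sigma} into the right-hand side of \eqref{equ-equ-Delta-tilde-tangent-cone} and exchange the order of summation to obtain
\[
\sum_{\tau'\preceq\sigma}\Bigl(\sum_{\tau'\preceq\tau\preceq\sigma}(-1)^{\dim\tau+\dim\tau'}{\mathbf 1}_{C_\sigma^{\tau}}{\mathbf 1}_{C_\tau^{\tau'}}\Bigr)\,{\mathbf 1}_{T^-_{\Delta,\tau'}}.
\]
It then remains to check that the bracketed quantity equals $\delta_{\tau',\sigma}$; once this is established, the outer sum collapses to its unique surviving term at $\tau'=\sigma$, yielding ${\mathbf 1}_{T^-_{\Delta,\sigma}}$ as required. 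Note that this bracketed quantity depends only on the combinatorics of $\sigma$ and not on $\Delta$, so the identity we need is a purely combinatorial fact about the face poset of a simplicial cone.

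The main obstacle is matching the inner sum with one of the identities \eqref{equ-8-10}--\eqref{equ-8-11} of the Langlands combinatorial lemma, as the latter are phrased using tangent cones $C_{\tau^*}^{\tau'^*}$ attached to faces of the dual cone $\sigma^\vee$ rather than the cones $C_\tau^{\tau'}$ appearing after substitution. The bridge I would use is a self-duality identity, namely ${\mathbf 1}_{C_\tau^{\tau'}} = {\mathbf 1}_{C_{\tau'^*}^{\tau^*}}$ for $\tau'\preceq\tau\preceq\sigma$, which is visible from the explicit facet-normal description of both cones by inequalities against complementary subsets of the edge vectors and facet normals of $\sigma$ under the dual basis correspondence $w_i\leftrightarrow b_i$. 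After applying this identity, the inner sum matches \eqref{equ-8-10} term by term (with appropriate relabeling via the inclusion-reversing bijection $\tau \leftrightarrow \tau^*$), and the proof is complete. Should the notational matching prove elusive, a safe fallback is to prove the required $A \ast A = \delta$ identity directly by the same nearest-face partition argument used to prove Lemma \ref{lem-Gamma-Delta-sigma}, specialized to $\Delta = \{0\}$.
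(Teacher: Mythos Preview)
Your overall strategy—substitute Lemma~\ref{lem-Gamma-Delta-sigma} into the right-hand side of \eqref{equ-equ-Delta-tilde-tangent-cone} and collapse the resulting inner sum via the Langlands combinatorial lemma—is precisely the paper's approach, just unwound. The paper phrases it abstractly: the Lemma is $L = H * F$, the Langlands lemma supplies the inverse $G$ of $F$, and right-convolution gives $H = L * G$. So at the level of ideas you are aligned with the paper.

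The gap is your claim that $A * A = \delta$, i.e.\ that $A(\tau,\sigma) = (-1)^{\dim\tau}{\mathbf 1}_{C_\sigma^\tau}$ is an involution in the incidence algebra. This is false, and so is the self-duality identity you propose as a bridge. The issue is that when the Lemma is applied to a proper face $\tau \prec \sigma$, the cone $C_\tau^{\tau'}$ is computed \emph{intrinsically} in $\Span(\tau)$: its facet normals are the dual basis to $\{w_i : i \in I_\tau\}$ within $\Span(\tau)$, not the original $b_i$'s. For instance, with $n=2$, $\sigma$ full-dimensional with unit edge vectors $w_1, w_2$ at angle $\theta = \pi/3$ and dual basis $b_1, b_2$, the extended cone $C_{\tau_1}^{0}$ (for $\tau_1 = \R_{\ge 0}w_1$) is the half-plane $\{\langle \cdot, w_1\rangle > 0\}$, not $\{\langle \cdot, b_1\rangle > 0\}$. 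One then computes
\[
(A*A)(0,\sigma) \;=\; 2\,{\mathbf 1}_{\sigma^\circ} \;-\; {\mathbf 1}_{\{\langle\cdot,w_1\rangle>0,\ \langle\cdot,b_2\rangle>0\}} \;-\; {\mathbf 1}_{\{\langle\cdot,w_2\rangle>0,\ \langle\cdot,b_1\rangle>0\}},
\]
and at $x = -\varepsilon w_1 + w_2$ with $\varepsilon>0$ small this evaluates to $-1$, not $0$. Likewise, your self-duality identity fails already at $\tau'=0$, $\tau=\sigma$: the left side is ${\mathbf 1}_{\sigma^\circ}$ while the right side is ${\mathbf 1}_{(\sigma^\vee)^\circ}$, and these differ whenever $\sigma \neq \sigma^\vee$. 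Consequently your fallback—proving $A*A=\delta$ directly by specializing to $\Delta=\{0\}$—cannot succeed either, because the identity itself is not true.

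What the paper actually uses is that the inverse of $F$ is the \emph{different} element $G(\tau,\tau') = (-1)^{\dim\tau}{\mathbf 1}_{C_{\tau^*}^{\tau'^*}}$, built from tangent cones of faces of the dual cone $\sigma^\vee$; unpacked, these involve the $w_i$ rather than the $b_i$. The Langlands lemma is $F*G = G*F = \delta$, not $F*F = \delta$, and it is convolution with $G$ (not a second copy of $F$) that carries $L = H*F$ to the Corollary.
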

\begin{proof}
Let $H$ and $L$ be elements of the incidence algebra such that 
$H(0, \tau) = {\mathbf 1}_{T^-_{\Delta, \sigma}}$ and $L(0, \tau) = \gamma_{\Delta, \tau}$, $\forall \tau \preceq \sigma$. 
Then \eqref{equ-Delta-tilde-sigma} states that $L(0, \tau) = (H * F)(0, \tau)$.  
Convolution of both sides from right with $G$ gives 
$(L * G)(0, \tau) = H(0, \tau)$, which is exactly \eqref{equ-equ-Delta-tilde-tangent-cone}.  
\end{proof}

\subsection{Proof of polynomiality} \label{ProofPoly}
\begin{proof}[Proof of Theorem \ref{th-polynomiality}]
In the definition of $J_{\Sigma}(\Delta)$, we use Corollary \ref{cor-Gamma} to write 
$T^-_{\Delta, \sigma}$ as $\sum\limits_{\tau \preceq \sigma} (-1)^{\dim \tau} {\mathbf 1}_{C_{\sigma}^{\tau}} \, \gamma_{\Delta, \tau}$.  
We have 
\begin{align*}
J_\Sigma(\Delta) 
&= \int\limits_{V} \sum_{\sigma \in \Sigma} (-1)^{\dim \sigma} K_\sigma(x) \, 
{\mathbf 1}_{T^-_{\Delta, \sigma}}(x) dx \\
&= \int\limits_{V} \sum_{\sigma \in \Sigma} (-1)^{\dim \sigma} K_\sigma(x) 
\left( \sum_{\tau: \tau \preceq \sigma} (-1)^{\dim \tau} 
{\mathbf 1}_{C_{\sigma}^{\tau}}(x) \, \gamma_{\Delta, \tau}(x) \right) dx \\
&= \sum_{\tau \in \Sigma} (-1)^{\dim \tau} \int\limits_{V} 
\left( \sum_{\sigma: \tau \preceq \sigma} (-1)^{\dim \sigma} K_\sigma(x) \, 
{\mathbf 1}_{C_{\sigma}^{\tau}}(x)\, \gamma_{\Delta, \tau}(x) \right) dx\\
\end{align*}
Now we use the assumption that $K_\sigma(x)$ is invariant along $\sigma$ and $\gamma_{\Delta, \tau}$ 
is invariant along $\tau^\perp$ (by definition of $\gamma_{\Delta, \tau}$) 
to write the above as 
$$\sum_{\tau} (-1)^{\dim \tau} 
\left( \, \int\limits_{\tau^\perp} \sum_{\sigma: \tau \preceq \sigma} (-1)^{\dim \sigma} 
K_\sigma(x_2)\, {\mathbf 1}_{C_{\sigma}^{\tau}}(x_2) dx_2 \right) 
\cdot 
\left( \, \int\limits_{\Span(\tau)} \gamma_{\Delta, \tau}(x_1) dx_1 \right).$$
Here $x=x_1 + x_2$ where $x_1 \in \Span(\tau)$ and $x_2 \in \tau^\perp$, and $dx_1$, $dx_2$ are the Lebesgue measures on 
$\Span(\tau)$, $\tau^\perp$ respectively so that $dx=dx_1 dx_2$.  
By Theorem \ref{th-vol-poly} and Remark \ref{rem-vol-virt-poly} we know that 
$$\vol(\Gamma_{\Delta, \tau}) = \int\limits_{\Span(\tau)} \gamma_{\Delta, \tau}(x_1) \, dx_1$$ 
is a polynomial in the support numbers of $\Gamma_{\Delta, \tau}$ of degree $\dim \tau$.  
By definition (see \eqref{equ-Gamma-Delta-sigma}) these support numbers either correspond  to the $b_i$ in which case 
they are $0$, or they correspond to the $w_i$ in which case they are equal to the $a_i$, the corresponding support numbers of $\Delta$.  
It follows that $\vol(\Gamma_{\Delta, \tau})$ is a polynomial in the support numbers of $\Delta$ of degree $\dim \tau$. 
Recall that the normal fan of the face of $\Delta$ corresponding to $\tau$ is the fan $\Sigma / \tau$ 
consisting of all the images of the cones $\sigma \succeq \tau$ in the quotient vector space $V / \Span(\tau) \cong \tau^\perp$.  
One then observes that 
$\int\limits_{\tau^\perp} \sum\limits_{\sigma: \tau \preceq \sigma} (-1)^{\dim \sigma} K_\sigma(x)\, 
{\mathbf 1}_{C_{\sigma}^{\tau}}(x) \, dx_2$ 
is exactly $J_{\Sigma/\tau}(0)$. 
In summary 
$$J_\Sigma(\Delta) = \sum_{\tau \in \Sigma} (-1)^{\dim \tau} J_{\Sigma/\tau}(0) \vol(\Gamma_{\Delta, \tau}).$$
This shows that $J_\Sigma(\Delta)$ is a linear combination of the polynomials $\vol(\Gamma_{\Delta, \tau})$ 
and hence is a polynomial itself. 
It remains to show that $J_{\Sigma/\tau}(0)$ is convergent. But this is the content of 
Lemma \ref{lem-conv-Sigma/tau} and the proof is finished. 
\end{proof}

\begin{proof}[Proof of Theorem \ref{th-polynomiality-discrete}]
In the proof of  Theorem \ref{th-polynomiality} replace any integral $\int\limits_A f(x)dx$ with a sum 
$\sum\limits_{m \in A \cap M}  f(m)$. In  particular, replace $\vol$ with the number of lattice points. 
For $\tau \in \Sigma$, let $M_1 = \Span(\tau) \cap M$ and $M_2 = \tau^\perp \cap M$. Note that it is possible that 
$M_1 + M_2 \neq M$. Nevertheless $M_1+M_2$ is a subgroup of finite index in $M$. Let $M' \subset M$ 
be a system of coset representatives for $M/ (M_1+M_2)$. Then every $m \in M$ can be uniquely written as 
$m'+m_1+m_2$ where $m' \in M'$, $m_i \in M_i$.  Then similar to the proof of Theorem \ref{th-polynomiality} 
we write 
\begin{align*}
S_\Sigma(\Delta, M) &= \sum_{m \in M} \sum_{\sigma \in \Sigma} (-1)^{\dim \sigma} K_\sigma(m) \, {\mathbf 1}_{T^-_{\Delta, \sigma}}(m) 
\\
&= \sum_{m \in M} \sum_{\sigma \in \Sigma} (-1)^{\dim \sigma} K_\sigma(m) 
\left(\sum_{\tau: \tau \preceq \sigma} (-1)^{\dim \tau} {\mathbf 1}_{C_{\sigma}^{\tau}}(m) \, \gamma_{\Delta, \tau}(m)\right) 
\\
&= \sum_{\tau \in \Sigma} (-1)^{\dim \tau} \sum_{m \in M} 
\left(\sum_{\sigma: \tau \preceq \sigma} (-1)^{\dim \sigma} K_\sigma(m)\, {\mathbf 1}_{C_{\sigma}^{\tau}}(m)\, 
\gamma_{\Delta, \tau}(m)\right) 
\\
&= \sum_{\tau} (-1)^{\dim \tau} \sum_{m' \in M'} 
\left( \sum_{m_2 \in M_2} \sum_{\sigma: \tau \preceq \sigma} (-1)^{\dim \sigma} K_\sigma(m'+m_2)\, 
{\mathbf 1}_{C_{\sigma}^{\tau}}(m'+m_2) \right) 
\cdot 
\left( \sum_{m_1 \in M_1} \gamma_{\Delta, \tau}(m'+m_1) \right).
\end{align*}
One shows that, for fixed $m' \in M'$, the quantity $\sum\limits_{m_2 \in M_2} 
\sum\limits_{\sigma: \tau \preceq \sigma} (-1)^{\dim \sigma} K_\sigma(m'+m_2)$ is equal to $S_{\Sigma/\tau}(0)$ 
with respect to the functions $K_\sigma(m'+x)$ (instead of $K_\sigma(x)$). 
By the discrete version of Lemma \ref{lem-conv-Sigma/tau}, we know that $S_{\Sigma/\tau}(0)$ is convergent.
Let us see that the other term $\sum\limits_{m_1 \in M_1} \gamma_{\Delta, \tau}(m'+m_1)$ depends polynomially 
on $\Delta$. Let $\pi: V \to \Span(\tau)$ be the orthogonal projection. Since $\gamma_{\Delta, \tau}$ is invariant 
in the $\tau^\perp$ direction we have $\gamma_{\Delta, \tau}(m'+m_1) = \gamma_{\Delta, \tau}(\pi(m')+m_1)$.
Now the polynomiality of $\sum\limits_{m_1 \in M}  \gamma_{\Delta, \tau}(\pi(m')+m_1)$ follows from 
Remark \ref{rem-vol-virt-poly} (see also Theorem \ref{th-McMullen} and Remark \ref{rem-McMullen}).  
Thus $S_\Sigma(\Delta, M)$ is a finite sum (over $m' \in M'$) of polynomials and hence a polynomial itself.  
This finishes the proof. 
\end{proof}

\section{Toric varieties}  \label{sec-toric-var}
\subsection{Background on toric varieties}   \label{subsec-toric-background}
In this section we review some basic  facts about toric varieties. Common references on  toric varieties are 
\cite{Fulton, cls}. 
Let $T = T_N \cong (\C^*)^n$ be an algebraic torus of dimension $n$ over $\C$, with character lattice 
$M \cong \Z^n$ and cocharacter lattice $N \cong \Z^n$. We denote the corresponding vector spaces 
$N \otimes_\Z \R$ and $M \otimes_\Z \R$ by $N_\R$ and $M_\R$ respectively.
For $m \in M$ we denote the corresponding character/irreducible representation by $\chi_m: T \to \C^*$.

Let $\sigma \subset N_\R$ be a rational strongly convex polyhedral cone. Recall that $\sigma$ is rational 
if it is generated as a cone by vectors from $N$. 
To $\sigma$ one associates an affine toric variety $U_\sigma$ defined by 
$$U_\sigma = \Spec(\C[\sigma^\vee \cap M].$$  
Here $\C[\sigma^\vee \cap M]$ is the semigroup algebra of the semigroup of all lattice points in the dual cone 
$\sigma^\vee$. If $\tau \preceq \sigma$ then we have natural inclusion $U_\tau \hookrightarrow U_\sigma$.  
The variety $U_0$ associated to the origin is just the algebraic torus $T$ itself. The $M$-grading on the algebra 
$\C[\sigma^\vee \cap M]$ induces a $T$-action on the variety $U_\sigma$ with open orbit $U_0$.

Recall that a fan $\Sigma$ in $N_\R$ is \emph{rational} if all the cones in $\Sigma$ are generated by vectors in $N$. 
Let $X_\Sigma$ be the toric variety corresponding to a complete rational fan $\Sigma$ (see \cite[Chap. 3]{cls} 
for more details). The (abstract) variety $X_\Sigma$ 
is obtained by gluing all the affine toric varieties $U_\sigma$, $\sigma \in \Sigma$, with respect to inclusion maps 
$U_\tau \hookrightarrow U_\sigma$, $\tau \preceq \sigma$.

There  is an inclusion-reversing correspondence between the cones in $\Sigma$ and the $T$-orbits in $X_\Sigma$. 
For $\sigma \in \Sigma$ let the corresponding $T$-orbit be $O_\sigma$.

For a ray $\rho \in \Sigma(1)$ we denote the corresponding $T$-orbit closure $\overline{O}_\rho$ by $D_\rho$. 
The $D_\rho$, $\rho \in \Sigma(1)$, are $T$-invariant prime divisors on $X_\Sigma$. For each ray  
$\rho \in \Sigma(1)$ let $v_\rho \in N$ be the  primitive vector along $\rho$, i.e. shortest lattice vector on 
$\rho$. Let $\xi \in \sigma \cap N$ be a cocharacter. One knows that for $x \in U_0$, $\lim_{t \to 0} \xi(t) \cdot x$ 
exists and is a point in the orbit $O_\sigma$. 

Let us assume that $X_\Sigma$ is a projective variety. This is equivalent to the set $\mathcal{P}(\Sigma)$, 
of polytopes with normal fan $\Sigma$, being nonempty. Let $\Delta \subset M_\R$ be a lattice polytope with 
normal fan $\Sigma$. The faces of $\Delta$ are in one-to-one correspondence with cones in $\Sigma$.  
For $\sigma \in \Sigma$ let $Q_\sigma$ be the corresponding face of $\Delta$. We note that 
$\dim Q_\sigma = \codim \sigma$. The polytope $\Delta$ can be represented as 
\begin{equation}  \label{equ-Delta}
\Delta =  \{ x \in  M_\R :  \langle x, v_\rho \rangle \leqslant -a_\rho, \forall \rho  \in  \Sigma(1)  \},
\end{equation}
where the $a_\rho$ are the support numbers of $\Delta$ (see Section  \ref{sec-prelim}).
Recall that for $\sigma \in \Sigma$ we let $T^+_{\Delta, \sigma}$ (respectively $T^-_{\Delta, \sigma}$) 
be the inward looking  (respectively outward looking) tangent cone of the corresponding face 
$Q_\sigma$ in $\Delta$ (see Equations \eqref{inward-face-cone} and \eqref{outward-face-cone}).

To $\Delta$ one associates a $T$-invariant (Cartier) divisor 
$$D_\Delta = \sum_{\rho \in \Sigma(1)} -a_\rho D_\rho.$$
It can be shown that $D_\Delta$ is an ample divisor. We denote the corresponding line bundle on 
$X_\Sigma$ by $\mathcal{L}_\Delta$. Since $D_\Delta$ is $T$-invariant, the line bundle 
$\mathcal{L}_\Delta$ comes with a natural $T$-linearization. The divisor $D_\Delta$ defines a sheaf of 
rational functions $\cO(D_\Delta)$ by 
\begin{align}   \label{equ-O-Delta}
H^0(U, \cO(D_\Delta)) &= \{ f \in \C(X_\Sigma) : (f) + D_\Delta > 0 \text{ on } U \} \subset \C[U_0],\\
&= \{ f \in \C(X_\Sigma) : \operatorname{ord}_{D_\rho}(f) \geqslant a_\rho,~ \forall \rho \in \Sigma(1) 
\textup{ such that } D_\rho \cap U \neq \emptyset\}.
\end{align} 
In particular, for an open affine chart $U_\sigma$, the subspace $H^0(U_\sigma, \cO(D_\Delta))$ is 
$T$-invariant and hence decomposes into 
one dimensional $T$-modules. Let $m \in M$. One verifies that for any ray $\rho \in \Sigma(1)$, 
the order of zero/pole of the character $\chi_m$,  regarded as a rational function on $U_0 \cong T$, 
along the divisor $D_\rho$ is given by   
$$\operatorname{ord}_{D_\rho}(\chi_m) = - \langle m, v_\rho \rangle.$$ It follows that, for any 
$\sigma \in \Sigma$, the irreducible representation $\chi_m$ appears in $H^0(U_\sigma, \cO(D_\Delta))$ 
if and only if $\langle m, v_\rho \rangle \leqslant -a_\rho$, for all $\rho \in \sigma(1)$. 
Since $\C[U_0]$, the coordinate ring of the algebraic torus, is multiplicity-free as a $T$-module it follows that 
$H^0(U_\sigma, \cO(D_\Delta))$ is also multiplicity-free. 
Thus the $T$-module $H^0(U_\sigma, \cO(D_\Delta))$ decomposes into one dimensional irreducible representation as 
\begin{equation}  \label{equ-H^0-U_sigma}
H^0(U_\sigma, \cO(D_\Delta)) = \bigoplus_{m \in T^+_{\Delta, \sigma} \cap M} \chi_m,
\end{equation}
where as before $T^+_{\Delta, \sigma}$ denotes the inward looking tangent cone of $\Delta$ at the face corresponding to  $\sigma$. 
Similarly, $\chi_m$ appears in the space of global sections  $H^0(X_\Sigma, \cO(D_\Delta))$ if and only if 
$\langle m, v_\rho \rangle \leqslant -a_\rho$, for all $\rho \in \Sigma(1)$ and we have 
\begin{equation}  \label{equ-H^0}
H^0(X_\Sigma, \cO(D_\Delta)) = \bigoplus_{m \in \Delta \cap M} \chi_m.
\end{equation}
This implies that $\dim(H^0(X_\Sigma, \cO(D_\Delta)) = |\Delta \cap M|$, the number of lattice points in $M$.

\subsection{Brianchon-Gram theorem and equivariant Euler characteristic}   \label{subsec-equiv-Euler-char}
Let $\F$ be a $T$-linearized sheaf (of rational functions) on $X_\Sigma$, that is for any $T$-invariant  open set $U$, 
the space of sections $H^0(U, \F)$ is a $T$-module and the restriction maps are $T$-equivariant. For $m \in M$ and 
$V$ a $T$-module let $V_m$ denote the $m$-isotypic component of $V$. 
By the \textit{equivariant Euler characteristic} of $\F$ we mean the function $\chi_T(X_\Sigma, \F): M \to \Z_{\geq 0}$ given by 
$$\chi_T(X_\Sigma, \F)(m) = \sum_{i=0}^n (-1)^i \dim(H^i(X_\Sigma, \F)_m).$$

Let us compute the equivariant Euler characteristic of the $T$-linearized sheaf $\cO(D_\Delta)$. As explained above, 
for each cone $\sigma \in \Sigma$ the $T$-module $H^0(U_\sigma, \cO(D_\Delta))$ decomposes as 
$$H^0(U_\sigma, \cO(D_\Delta)) = \bigoplus_{m \in T^+_{\Delta, \sigma} \cap M} \chi_m.$$
Recall that $T^+_{\Delta, \sigma}$ denotes the inward tangent cone of $\Delta$ at the face corresponding to 
$\sigma$ (see Section \ref{subsec-polytope}).

From above it follows that the equivariant Euler characteristic $\chi_T(X_\Sigma, \cO(D_\Delta))$, 
computed using \v{C}ech cohomology, can be written as:
\begin{equation}  \label{equ-equi-Euler-char-O(D)}
\chi_T(X_\Sigma, \cO(D_\Delta)) = \sum_{\sigma \in \Sigma} (-1)^{\dim(Q_\sigma)} {\mathbf 1}_{T^+_{\Delta, \sigma} \cap M},
\end{equation}
where as usual ${\mathbf 1}_A$ denotes the characteristic function of a set $A$.

One knows that $\cO(D_\Delta)$ is ample and hence $H^i(X_\Sigma, \cO(D_\Delta)) = 0$ for $i > 0$.  
Thus we also obtain 
\begin{equation}    \label{equ-Euler-char-H^0}
\chi_T(X_\Sigma, \cO(D_\Delta))(m) = \dim(H^0(X_\Sigma, \cO(D_\Delta))_m), \quad \forall m \in M. 
\end{equation}
And hence from \eqref{equ-H^0} we have 
\begin{equation}   \label{equ-Euler-char-O(D)}
\chi_T(X_\Sigma, \cO(D_\Delta)) = {\mathbf 1}_{\Delta \cap M}.
\end{equation}
Comparing with \eqref{equ-equi-Euler-char-O(D)} one recovers the Brinachon-Gram theorem (Theorem \ref{th-BG}).

The alternative version of the Brianchon-Gram using outward face cones (Theorem \ref{th-BG-alt}) can also 
be obtained in a similar fashion.
Let $\Delta'$ be the polytope with support numbers $a_\rho + 1$ and $D' = D_{\Delta'} = 
\sum\limits_{\rho \in \Sigma(1)} -(a_\rho + 1) D_\rho$ 
the corresponding Cartier divisor.  
Note that $\langle x, v_\rho \rangle \leqslant -(a_\rho + 1)$ if and only $\langle -x, v_\rho \rangle>a_\rho$. 
Thus for all $m \in M$ we have 
\begin{equation} \label{equ-Euler-char-O(-D')}
\chi_T(X_\Sigma, \cO(-D'))(m) = \sum_{\sigma \in \Sigma} (-1)^{n -  \dim \sigma} 
{\mathbf 1}_{T^-_{\Delta, \sigma} \cap M}(-m)
\end{equation} 
(recall \eqref{outward-face-cone} for defining inequalities of outward tangent cone  $T^-_{\Delta, \sigma}$).
On the other hand, the Khovanskii-Pukhlikov formula for inverse of the polytope $\Delta$ with respect 
to the convolution $*$ (see Section \ref{sec-convex-chains}) tells us that:
\begin{equation}   \label{equ-Ehrhart-recip}
\chi_T(X_\Sigma, \cO(-D'))(m) = (-1)^n \chi_T(X_\Sigma, \cO(D_\Delta))(-m) = (-1)^n {\mathbf 1}_{\Delta \cap  M}(-m),  
\end{equation}
Putting together \eqref{equ-Euler-char-O(-D')} and \eqref{equ-Ehrhart-recip} we obtain 
$$(-1)^n {\mathbf 1}_{\Delta \cap  M} = \sum_{\sigma \in \Sigma} (-1)^{n - \dim \sigma} 
{\mathbf 1}_{T^-_{\Delta, \sigma} \cap M}$$
which  immediately implies Theorem \ref{th-BG-alt}.

\begin{remark}[A symplectic interpretation of the Brianchon-Gram theorem]
We can also give a symplectic geometric interpretation of the Brianchon-Gram theorem, 
namely as an identity between Liouville measures.  
Let $X$ be a symplectic manifold with a Hamiltonian $S^1$-action with moment map $\mu: X \to \R$.  
This means that the Hamiltonian vector field of $\mu$ generates the $S^1$-action.  
Let $\epsilon$ be a regular value of the moment map $\mu$. Then $\mu^{-1}([\epsilon, \infty))$ 
is a manifold with boundary.  The \emph{symplectic cut} $\overline{X}_{\mu \geq \epsilon}$ 
is the manifold obtained by collapsing each $S^1$-orbit in the boundary $\mu^{-1}(\epsilon)$ to a point.

We can decompose $T = (\C^*)^n$ as $T = (S^1)^n \times \R_{>0}^n$. Equip $T$ with the standard symplectic form from $\C^n$.  
Each ray $\rho \in \Sigma(1)$ defines a Hamiltonian function $\mu_\rho: U_0 \to \R$ on $U_0  \cong T$ by 
$$\mu_\rho(x) = |x|^{v_\rho} := |x_1|^{r_1} \cdots |x_n|^{r_n},$$
where $x=(x_1, \ldots, x_n)$ and $v_\rho = (r_1, \ldots, r_n)$.
One verifies that the Hamiltonian vector field of $\mu_\rho$ generates the $\C^*$-action on $T$ corresponding to the 
cocharacter $v_\rho \in N$. Let $\Sigma$ be a smooth fan and let $\Delta$ be a rational polytope with normal fan $\Sigma$ and let 
$a_\rho$, $\rho \in \Sigma(1)$, be its support numbers. Starting with $(\C^*)^n$, doing repeated symplectic cuts with respect to the 
$\mu=\mu_\rho$ and $\epsilon=a_\rho$, $\rho \in \Sigma(1)$, one arrives at the toric variety $X_\Sigma$. One can show that 
the open affine chart $U_\sigma$ is the symplectic manifold obtained by symplectic cuts using rays of $\sigma$.  
Moreover, the image of the moment map of $U_\sigma$ is the inward tangent cone $T^+_{\Delta, \sigma}$.

The Brianchon-Gram equality \eqref{equ-BG} can be thought of as an equality involving pushforwards (to $N_\R = \R^n$) of 
Liouville measures on all the symplectic manifolds $U_{\sigma}$ and $X_\Sigma$. 
\end{remark}

\subsection{Positive part of a toric variety and logarithm map}   \label{subsec-toric-positive}
As before let $X_\Sigma$ be the toric variety associated to a rational fan $\Sigma$ in $N_\R$. 
Take $\sigma \in \Sigma$. By definition the set $U_\sigma(\C)$ of points of $U_\sigma$ defined over 
$\C$ is the set of maximal ideals of the semigroup 
algebra $\C[\sigma^\vee \cap M]$. This set then can be identified with $\Hom(\sigma^\vee \cap M, \C)$ 
where $\Hom$ denotes the semigroup homomorphisms. 
This observation enables us to construct $X_\Sigma^+$, the points of $X_\Sigma$ over the semigroup 
$\R_{\geq 0}$ (see \cite[Section 4.1]{Fulton}). 
We think of $X_\Sigma^+$ as the ``positive'' part of $X_\Sigma(\C)$. It is constructed as follows. For each 
$\sigma \in \Sigma$ let $U_\sigma^+ = \Hom(\sigma^\vee \cap M, \R_{\geq 0})$. Then, as before the $U_\sigma^+$ 
glue together to give $X_\Sigma^+$. One has natural inclusion $X_\Sigma^+ \hookrightarrow X_\Sigma(\C)$.  
Moreover, the absolute value $| \cdot |: \C \to \R_{\geq 0}$ induces a retraction map $X_\Sigma(\C) \to X_\Sigma^+$.  
Let $T_K = (S^1)^n$ denote the usual compact torus which is the maximal compact subgroup of $T$.  
One verifies that the retraction map induces a homeomorphism between the quotient $X_\Sigma(\C) / T_K$ and $X_\Sigma^+$.

Another way to look at $X_\Sigma^+$ is as follows. 
Consider the \emph{logarithm map} 
\[ 
\Log: T_N = (\C^*)^n \longrightarrow N_\R = \Hom(M, \R) 
\] 
defined as follows. 
For $z \in T$ and $m \in M$ let  
\begin{equation}   \label{equ-log-map}
\Log(z)(m) = \log( |\chi_m(z)|).
\end{equation}
In the standard coordinates for $(\C^*)^n$ the logarithm map is given by 
\begin{equation} \label{equ-log-map2}
\Log(z_1, \ldots, z_n) = (\log|z_1|, \ldots, \log|z_n|).
\end{equation}
For each $\sigma \in \Sigma$ the orbit $O_\sigma$ can be identified with $T / T_\sigma$ where 
$T_\sigma$ is the $T$-stabilizer of $O_\sigma$. 
Let  $N_\sigma$ denote the cocharacter lattice of $T_\sigma$. It follows from the definitions that 
$N_\sigma \otimes \R = \Span(\sigma)$. 
The logarithm map then induces a map $\Log_\sigma: T/T_\sigma  \to N_\R / \Span(\sigma)$. In the same way, 
that $X_\Sigma(\C)$ is a disjoint union of the tori $O_\sigma$, $\sigma \in \Sigma$, the positive part $X_\Sigma^+$, is a disjoint union 
of the real vector spaces $N_\R / \Span(\sigma)$, $\sigma \in \Sigma$.

Finally $X_\Sigma^+$ is actually homeomorphic to a polytope (in a non-unique way). Given a polytope 
$\Delta$ with normal fan $\Sigma$, 
one can construct explicitly a $T_K$-invariant continuous map $\mu: X_\Sigma \to \Delta$ such that the induced map 
$\bar{\mu}: X_\Sigma / T_K \to \Delta$ is a homeomorphism and the following diagram is commutative (see \cite[Section 4.2]{Fulton}). 
\begin{equation}  \label{equ-moment-log-diagram}
\begin{tikzcd}
& (\C^*)^n \cong U_0 \arrow[hookrightarrow]{r} \arrow{d}{\Log} \arrow{dl}[swap]{\Log} & 
X_\Sigma(\C) \arrow{d}{\Log} \arrow{dr}{\mu} & \\
N_\R & \R^n \cong U_0^+ \arrow{l}{\cong} \arrow[hookrightarrow]{r} & X_\Sigma^+ \arrow{r}{\bar{\mu}}[swap]{\cong} & \Delta  
\end{tikzcd}
\end{equation}
Moreover, the bottom row gives a homeomorphism between $N_\R$ and the interior $\Delta^\circ$ of $\Delta$.
The map $\mu$ is a special case of the notion of \emph{momentum map} from the theory of 
Hamiltonian group actions in symplectic geometry.

\section{Geometric interpretations of combinatorial truncation}  \label{sec-geo-comb-truncation}
We propose two geometric interpretations of our combinatorial truncation in terms of geometric notions on toric varieties. 
The same ideas should extend to give geometric interpretations of Arthur's truncation and modified kernel. 
We expect that in this case one should replace a toric variety $X_\Sigma$ by Mumford's compactificaiton of a reductive 
algebraic group as in \cite[Section IV.2]{Mumford}.

\subsection{Combinatorial truncation as a complex measure on a toric variety}  \label{sec-measure}
In this section we propose that combinatorial truncation can be interpreted as a ``truncated'' complex measure on 
a projective toric variety, obtained from the data of 
prescribed measures on each torus orbit as well as choice of a polytope normal to the fan which determines 
certain neighborhoods of the torus orbits.

As usual let $X_\Sigma$ be the toric variety associated to a (rational) fan $\Sigma$ in $N_\R$.
Recall that the starting data of combinatorial truncation is a collection of functions $\{K_\sigma: N_\R \to \C: \sigma \in \Sigma\}$, 
where each $K_\sigma$ is invariant in the direction of $\Span(\sigma)$.

As before let $T_K = (S^1)^n$ denote the compact torus in $T = (\C^*)^n$ which is the maximal compact subgroup of $T$.  
Suppose we are given a $T_K$-invariant complex measure $\omega_0 = f_0 d\mu_0$ on $U_0 = T$ 
where $f_0$ is a continuous function 
on $U_0$ and $d\mu_0$ denotes a Haar measure on $U_0$. Moreover, suppose for each $\{0\} \neq \sigma \in \Sigma$ we have a 
$T_K$-invariant complex measure $\omega_\sigma = f_\sigma d\mu_\sigma$ on the torus orbit 
$O_\sigma$, the $T$-orbit in $X_\Sigma$ 
associated to $\sigma$. Here $f_\sigma$ is a continuous function on $O_\sigma$ and 
$d\mu_\sigma$ is the Haar measure on $O_\sigma$ 
induced from $d\mu_0$. Recall that $O_\sigma \cong T / T_\sigma$ is itself isomorphic to a torus, 
where $T_\sigma \subset T$ is the stabilizer of any point in $O_\sigma$. 
Since $\omega_\sigma$, and hence $f_\sigma$, are $T_K$-invariant, the function $f_\sigma$ induces a continuous function 
$k_\sigma: N_\R / \Span(\sigma) \longrightarrow \C$.

The projection $N_\R \to N_\R/\Span{\sigma}$ maps the cone $\sigma$ to $\{0\}$. This gives us an equivariant morphism 
$\pi_\sigma$ from the $T$-toric variety $U_\sigma$ to the $(T/T_\sigma)$-toric variety $O_\sigma$ (see \cite[Sec. 3.3]{cls}).  
We can use $\pi_\sigma: U_\sigma \to O_\sigma$ to extend the measure $\omega_\sigma$ to a measure $\Omega_\sigma$ 
on the affine toric chart $U_\sigma \subset X_\Sigma$ (and in particular, on the open orbit $U_0 \cong T$) by defining  
$$\Omega_\sigma = \pi_\sigma^*(\omega_\sigma).$$
The measure $\Omega_\sigma$ then gives a continuous function $K_\sigma: N_\R \to \C$ 
which is invariant in the direction of $\Span(\sigma)$.

Now fix an inner product $\langle \cdot, \cdot \rangle$ on $N_\R$ and identify $M_\R$ with $N_\R$ via 
$\langle \cdot, \cdot \rangle$.
As usual take a polytope $\Delta \subset M_\R \cong N_\R$ with normal fan $\Sigma$. 
Recall that $\Log: T \to N_\R$ denotes the logarithm map on the torus, which extends to 
$\Log: X_\Sigma \to X_\Sigma^+$ (see \eqref{equ-log-map} and the diagram \eqref{equ-moment-log-diagram}).  
Consider the tangent cone $T^-_{\Delta, \sigma}$.  We regard it as an open subset of $U_0^+ \cong N_\R \cong \R^n$ 
and hence as an open subset of $X_\Sigma^+$.  We have 
$$U_{\Delta, \sigma} = \Log^{-1}(T^-_{\Delta, \sigma}).$$
We can also define the subset $U_{\Delta} \subset U_0$ by 
$$U_{\Delta} = \Log^{-1}(\Delta).$$
We think of $\Omega_\sigma \, {\mathbf 1}_{U_{\sigma, \Delta}}$ as an extension of the measure 
$\omega_\sigma$ to the neighborhood $U_{\Delta, \sigma}$.
Finally, we can define a complex measure $\Omega_\Delta$ on $X_\Sigma$ by 
$$\Omega_\Delta = \sum_{\sigma \in \Sigma} (-1)^{\dim \sigma} \Omega_\sigma ~{\mathbf 1}_{U_{\Delta, \sigma}}.$$
It is a $T_K$-invariant complex measure on $X_\Sigma$ and corresponds to the function 
$k_\Delta$ on $N_\R$. We think of it as a \emph{truncation} of $\omega_0$ with respect to the measures 
$\omega_\sigma$ at infinity. 
From Theorems \ref{th-growth-conv} and \ref{th-polynomiality} we have the following.  
 
\begin{proposition}
Under the assumptions in Theorem \ref{th-growth-conv} on the functions $K_\sigma$, 
the total measure of $\Omega_\Delta$ is finite and is a polynomial in the support numbers of $\Delta$.
\end{proposition}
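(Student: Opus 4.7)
The plan is to reduce the statement to Theorems \ref{th-growth-conv} and \ref{th-polynomiality} by showing that the total measure of $\Omega_\Delta$ equals $J_\Sigma(\Delta)$ up to a fixed positive constant, namely the volume of the compact torus $T_K$. The mechanism for this reduction is the pushforward under the logarithm map $\Log \colon X_\Sigma \to X_\Sigma^+$, which, restricted to the open orbit $U_0 \cong T$, realizes the quotient of $U_0$ by $T_K$ as $N_\R$ and sends Haar measure on $U_0$ to a constant multiple of Lebesgue measure on $N_\R$.

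First I would check the ``building block'' identification. For each $\sigma \in \Sigma$, the measure $\Omega_\sigma = \pi_\sigma^\ast(\omega_\sigma)$ is a $T_K$-invariant complex measure on $U_\sigma$ whose associated function (via $\Log$ and integration over the $T_K$-fiber) is precisely $K_\sigma$. Since $K_\sigma$ is invariant in the direction of $\Span(\sigma)$ by construction (as $\omega_\sigma$ lives on $O_\sigma \cong T/T_\sigma$ and $\Span(\sigma) = N_\sigma \otimes \R$), the assumption (i) in Theorem \ref{th-growth-conv} is automatic. The set $U_{\Delta,\sigma} = \Log^{-1}(T^-_{\Delta,\sigma})$ is a $T_K$-invariant open subset of $U_\sigma$, and its pushforward under $\Log$ is, by definition, $T^-_{\Delta,\sigma}$.

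Next I would combine these building blocks. Using Fubini along the $T_K$-fibers of $\Log$ together with the identifications above, the total measure of $\Omega_\Delta$ becomes
\begin{equation*}
\int_{X_\Sigma} d\Omega_\Delta
= \sum_{\sigma \in \Sigma} (-1)^{\dim \sigma} \int_{U_{\Delta,\sigma}} d\Omega_\sigma
= \vol(T_K) \sum_{\sigma \in \Sigma} (-1)^{\dim \sigma} \int_{T^-_{\Delta,\sigma}} K_\sigma(x)\, dx
= \vol(T_K)\, J_\Sigma(\Delta),
\end{equation*}
where the rearrangement of the sum and integral is justified a posteriori by absolute convergence. The absolute convergence and the fact that $J_\Sigma(\Delta)$ is a polynomial in the support numbers of $\Delta$ are exactly Theorems \ref{th-growth-conv} and \ref{th-polynomiality}, whose hypotheses are satisfied by assumption. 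Multiplying by the positive constant $\vol(T_K)$ preserves both finiteness and polynomiality, which yields the proposition.

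The main technical point is the measure-theoretic bookkeeping in the first step: one must be careful that $\pi_\sigma^\ast(\omega_\sigma)$, although a pullback from the lower-dimensional orbit $O_\sigma$, is correctly interpreted as a measure on $U_\sigma$ whose $\Log$-pushforward (after integrating out the compact directions and the $\Span(\sigma)$-direction on which everything is constant) is $K_\sigma(x)\,dx$ on $N_\R$. Once this bookkeeping is in place the proof is a direct appeal to the convergence and polynomiality theorems already proven.
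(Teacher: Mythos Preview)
Your proposal is correct and follows the same approach as the paper. In fact, the paper gives no explicit proof at all: it simply remarks in the text preceding the proposition that $\Omega_\Delta$ ``corresponds to the function $k_\Delta$ on $N_\R$'' and then states that the proposition follows from Theorems \ref{th-growth-conv} and \ref{th-polynomiality}. Your argument spells out this correspondence carefully (the $\Log$-pushforward, the $\vol(T_K)$ constant, the Fubini step), which is exactly the content the paper leaves implicit.
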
 
 
\begin{remark}
In fact, each tangent cone $T^-_{\Delta, \sigma}$ gives us an open neighborhood of the orbit closure 
$\overline{O}_\sigma$ in $X_\Sigma$.  
To construct this open neighborhood, we complete $T^-_{\Delta, \sigma} \subset N_\R$ to an open subset 
$\tilde{T}_{\Delta, \sigma} \subset X_\Sigma^+$ containing the closure $\overline{O^+_\sigma}$ by 
$$\tilde{T}_{\Delta, \sigma} = \bigcup_{\sigma': \sigma \preceq \sigma'} 
\bigcup_{\tau: \tau \preceq \sigma'} T^-_{Q_{\tau}, \sigma'} ~\subset 
X_\Sigma^+ := \bigsqcup_{\sigma \in \Sigma} N_\R / \Span(\sigma).$$
One verifies that $\tilde{T}_{\Delta, \sigma}$ is indeed an open subset of $X_\Sigma^+$ containing $\overline{O^+_\sigma}$.  
It follows that $\tilde{U}_{\Delta, \sigma} = \Log^{-1}(\tilde{T}_{\Delta, \sigma})$
is an open neighborhood of the orbit closure $\overline{O}_\sigma$ in the toric variety $X_\Sigma$.
We note that $T^-_{\Delta, \sigma}$ is open dense in  $\tilde{T}_{\Delta, \sigma}$ and hence 
for the purposes of truncation it does not matter 
whether we work with $T^-_{\Delta, \sigma}$ or $\tilde{T}_{\Delta, \sigma}$.
\end{remark}

\subsection{Combinatorial truncation as a Lefschetz number} \label{sec-Lefschetz}
In this section we give an interpretation of the combinatorial truncation as a Lefschetz number.

\subsubsection*{Lefschetz number} 
Let $X$ be a topological space such that all its cohomology groups $H^i(X, \R)$ are 
finite dimensional and for some $n \geqslant 0$, $H^i(X, \R) = 0$, $\forall i > n$.
Let $\Phi: X \to X$ be a continuous map. Recall that the \emph{Lefschetz number} of 
$\Phi$ is defined to be 
$$\Lambda(\Phi) = \sum_{i=0}^n (-1)^i ~\operatorname{Tr}(\Phi^*: H^i(X, \R) \to H^i(X, \R)).$$

The Lefschetz number of the identity map is, by definition, equal to the Euler characteristic of $X$.  
The Lefschetz number appears in the Lefschetz fixed point theorem which states that if $X$ is a compact triangulable space and 
$\Lambda(\Phi) \neq 0$, then $\Phi$ has at least one fixed point. 

Let us define an analogue of the notion of Lefschetz number for morphisms of sheaves.  
Let  $\F$ be a sheaf of vector spaces on $X$ such that all the cohomology groups of $(X, \F)$ 
are finite dimensional and for some $n$, $H^i(X, \F) = 0$, $\forall i > n$. 
By a \emph{morphism of sheaves} $\Psi: \F \to \F$ we mean a collection of linear maps 
$\{\Psi_U: \F(U) \to \F(U) : U \subset X \textup{ open }\}$ which are compatible with the restriction maps.  
That is, for $U \subset V$ we have $$\Psi_U \circ \textup{rest}_{V, U} = \textup{rest}_{V, U} \circ \Psi_V.$$
Clearly, $\Psi$ induces linear maps $\Psi^*: H^i(X, \F)  \to H^i(X, \F)$ between the cohomology groups of $(X, \F)$.
Extending the above notion of Lefschetz number we make the following definition. 
\begin{definition}[Lefschetz number for morphisms of sheaves]  \label{def-Lefschetz-sheaf}
The \textit{Lefschetz number} $\Lambda(\Psi, \F)$ is defined to be 
$$\Lambda(\Psi, \F) = \sum_{i=0}^n (-1)^i ~\textup{Tr}(\Psi^*: H^i(X, \F) \to H^i(X, \F)).$$
\end{definition}

\begin{remark}
When $\Psi$ is the identity morphism, i.e. all the maps $\Psi_U$ are identities, then 
$\Lambda(\Psi, \F)$ is just the \emph{Euler characteristic} of the 
sheaf $\F$.
\end{remark}

Let $\mathcal{U}$ be a finite open cover of $X$. Suppose $\mathcal{U}$ is a good open cover with respect to $\F$, that is, 
$\F$ is acyclic on any intersection of the open sets in $\mathcal{U}$. It is a standard result in topology that 
the \v{C}ech cohomology groups of $(\mathcal{U}, \F)$ are independent of the choice of the good open cover and 
coincide with the sheaf cohomology groups of $(X, \F)$. 
 
Suppose the vector spaces in the \v{C}ech cochain complex $C^\bullet(\mathcal{U}, \F)$ are finite dimensional.  
In other words, for any collection of open sets $U_1, \ldots, U_k \in \mathcal{U}$ we have 
$\dim H^0(U_1 \cap \cdots \cap U_k, \F) < \infty$.  
In this case, the Lefschetz number can be computed in terms of the traces of the vector spaces in the cochain complex 
$C^\bullet(\U, \F)$ as well. This straightforward result is sometimes referred to as the Hopf trace formula. 
\begin{proposition}   \label{prop-Hopf}
With assumptions as above, the Lefschetz number can be computed as 
$$\Lambda(\Psi) = \sum_{i=0}^n (-1)^i ~\textup{Tr}(\Psi^*: C^i(\mathcal{U}, \F) \to C^i(\mathcal{U}, \F)),$$ 
where $C^i(\mathcal{U}, \F)$ denotes the vector space of $i$-th \v{C}ech cochains of $\mathcal{U}$ with coefficients in $\F$.    
\end{proposition}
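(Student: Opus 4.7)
The plan is to deduce the formula from the general principle (purely linear algebra) that for a bounded cochain complex of finite dimensional vector spaces equipped with an endomorphism commuting with the differential, the alternating sum of traces on the complex equals the alternating sum of traces on its cohomology. First I would verify that the sheaf morphism $\Psi$ induces, in the natural way, a cochain map $\Psi^\bullet : C^\bullet(\mathcal U, \F) \to C^\bullet(\mathcal U, \F)$. Indeed, a \v{C}ech $i$-cochain is an element of $\bigoplus_{U_{i_0},\dots,U_{i_i}\in\mathcal U} H^0(U_{i_0}\cap\cdots\cap U_{i_i}, \F)$, and on each such intersection we apply the local map $\Psi_{U_{i_0}\cap\cdots\cap U_{i_i}}$. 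Compatibility with the \v{C}ech differential is immediate from the hypothesis that the $\Psi_U$ commute with restriction maps. Since by the good cover assumption the \v{C}ech cohomology of $(\mathcal U,\F)$ computes the sheaf cohomology $H^i(X,\F)$, and since these identifications are natural, the induced maps of $\Psi^\bullet$ on \v{C}ech cohomology coincide with the maps $\Psi^\ast$ on $H^i(X,\F)$ appearing in Definition \ref{def-Lefschetz-sheaf}.

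Next I would carry out the standard telescoping argument. Write $C^i = C^i(\mathcal U,\F)$ and let $d^i : C^i \to C^{i+1}$ be the \v{C}ech differential. Set $Z^i = \ker d^i$ and $B^i = \operatorname{im} d^{i-1}$, so that $H^i(\mathcal U,\F) = Z^i/B^i$. The endomorphism $\Psi^\bullet$ preserves both $Z^i$ and $B^i$, so trace is additive along the short exact sequences
\begin{equation*}
0 \to Z^i \to C^i \xrightarrow{d^i} B^{i+1} \to 0, \qquad 0 \to B^i \to Z^i \to H^i(\mathcal U,\F) \to 0,
\end{equation*}
yielding $\operatorname{Tr}(\Psi^\bullet|_{C^i}) = \operatorname{Tr}(\Psi^\bullet|_{Z^i}) + \operatorname{Tr}(\Psi^\bullet|_{B^{i+1}})$ and $\operatorname{Tr}(\Psi^\bullet|_{Z^i}) = \operatorname{Tr}(\Psi^\bullet|_{B^i}) + \operatorname{Tr}(\Psi^\ast|_{H^i})$. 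Taking the alternating sum $\sum_i (-1)^i$, the $B^\bullet$ contributions cancel pairwise (this uses boundedness of the complex, guaranteed by $H^i(X,\F) = 0$ for $i > n$ together with the fact that $C^i = 0$ for $i$ larger than $|\mathcal U|$), leaving exactly
\begin{equation*}
\sum_{i=0}^n (-1)^i \operatorname{Tr}(\Psi^\bullet|_{C^i}) = \sum_{i=0}^n (-1)^i \operatorname{Tr}(\Psi^\ast|_{H^i(X,\F)}) = \Lambda(\Psi,\F).
\end{equation*}

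The main obstacle, if any, is a minor one: one must ensure that all the finite dimensionality needed for traces to make sense is actually present. The hypothesis of the proposition provides this directly for each $H^0(U_{i_0}\cap\cdots\cap U_{i_i},\F)$, and hence for each $C^i$; since $\mathcal U$ is finite, the complex is bounded in both directions and every vector space involved (including each $Z^i$, $B^i$, and subquotient $H^i$) is finite dimensional. With this in place the telescoping is routine and the proposition follows.
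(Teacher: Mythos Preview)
Your argument is correct and is exactly the standard proof of the Hopf trace formula via additivity of trace on the short exact sequences $0\to Z^i\to C^i\to B^{i+1}\to 0$ and $0\to B^i\to Z^i\to H^i\to 0$. The paper does not supply a proof of this proposition at all; it simply labels the result ``straightforward'' and attributes it to Hopf, so there is nothing to compare against beyond noting that your write-up fills in precisely the routine linear-algebra details the authors chose to omit.
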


Similarly, suppose $X$ is equipped with a measure and $\F$ a sheaf of $L^2$-functions on $X$ and let 
$\Psi: \F \to  \F$ be a morphism of sheaves. 
Moreover, suppose for every open set $U$, the linear operator $\Psi: \F(U) \to \F(U)$ is a trace class operator 
with kernel function $K_U$.
Then for each $i$, the induced map $\Psi^*: H^i(X, \F) \to H^i(X, \F)$ is also a trace class operator. We denote its kernel by $T_i$.

\begin{definition}[Lefschetz number for morphisms of sheaves of $L^2$-functions]
We define the \textit{Lefschetz number} $\Lambda(\Psi, \F)$ by 
\begin{equation}  \label{equ-Lefschetz-L^2}
\Lambda(\Psi, \F) = \int\limits_X \sum_{i=0}^n (-1)^i T_i(x) \, dx.  
\end{equation}
\end{definition}

As above, let $\mathcal{U}$ be a finite open cover of $X$ which is a good cover with respect to $\F$.  
Suppose for each $i$ the operator $\Psi^*: C^i(\mathcal{U}, \F) \to C^i(\mathcal{U}, \F)$ is trace class with kernel $K_i$.  
Similarly to Proposition \ref{prop-Hopf} the Lefschetz number $\Lambda(\Psi, \F)$ can be computed as 
$$\Lambda(\Psi, \F) = \int\limits_X \sum_{i=0}^n (-1)^i K_i(x) \, dx.$$

The observation in this section is that when $X=X_\Sigma$ is a  toric variety, the Lefschetz number is given 
by a combinatorial truncation $J_\Sigma(\Delta)$. 
As usual let $\Sigma$ be a (rational) fan in $N_\R$ and let $\Delta \in \mathcal{P}(\Sigma)$ be a polytope with 
normal fan $\Sigma$.  As in Section \ref{subsec-toric-background} let  $X_\Sigma$ be the toric variety of the fan 
$\Sigma$ and $\mathcal{O}(D_\Delta)$ be the sheaf of sections of the (Cartier) divisor $D_\Delta$ associated to $\Delta$. 
Let  the $a_\rho$, $\rho \in \Sigma(1)$, be the support numbers of $\Delta$. Let $\Delta'$ be the polytope whose support numbers 
are the $a_\rho-1$. Let $\Psi: \mathcal{O}(-D_{\Delta'}) \to \mathcal{O}(-D_{\Delta'})$ be a morphism of sheaves.

Recall that the characters $\chi_m$, $m \in M$, form a vector space basis for $\C[U_0]$. Moreover, a subset of this basis 
is a basis for $\mathcal{O}(-D_{\Delta'})$. For $m \in M$, let $K_\sigma(m)$ be the $(m, m)$-entry of the matrix of the linear operator 
$\Psi_\sigma: \mathcal{O}(-D_{\Delta'})(U_\sigma) \to \mathcal{O}(-D_{\Delta'})(U_\sigma)$. The following follows from 
Section \ref{subsec-equiv-Euler-char} and in particular \eqref{equ-Euler-char-O(-D')}.

\begin{proposition}[Combinatorial truncation as a Lefschetz number on a toric variety]   \label{prop-toric-Lefschetz}
With notation as above, the Lefschetz number $\Lambda(\Psi, \mathcal{O}(-D_{\Delta'}))$ is equal to the truncated sum 
$S_\Sigma(\Delta, M)$:
$$\Lambda(\Psi, \mathcal{O}(-D_{\Delta'})) = S_\Sigma(\Delta, M) := 
\sum_{m \in M} \sum_{\sigma \in \Sigma} (-1)^{\dim \sigma} K_\sigma(m)~ {\mathbf 1}_{T^-_{\Delta, \sigma} \cap M}(m).$$ 
\end{proposition}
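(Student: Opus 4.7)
The plan is to view the statement as the $\operatorname{Tr}$-analog of the equivariant Euler characteristic computation in Section~\ref{subsec-equiv-Euler-char}, with dimensions of weight spaces replaced by traces of $\Psi$ on those weight spaces. The route is the \v{C}ech computation that produced \eqref{equ-Euler-char-O(-D')}, combined with Proposition~\ref{prop-Hopf}.

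First I would use the standard affine cover $\cU = \{U_\sigma : \sigma \in \Sigma(n)\}$ of $X_\Sigma$ by the charts attached to maximal cones. Each intersection $U_{\sigma_0} \cap \dots \cap U_{\sigma_p}$ equals the toric chart $U_{\sigma_0 \cap \dots \cap \sigma_p}$, which is affine, so the coherent sheaf $\cO(-D_{\Delta'})$ is acyclic on every nonempty intersection and $\cU$ is a good \v{C}ech cover. Proposition~\ref{prop-Hopf} then gives
\[
\Lambda(\Psi, \cO(-D_{\Delta'})) = \sum_{p \geqslant 0} (-1)^p \operatorname{Tr}\bigl(\Psi^* \mid C^p(\cU, \cO(-D_{\Delta'}))\bigr).
\]

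Next I would decompose everything by $T$-weight. For every $\tau \in \Sigma$, $H^0(U_\tau, \cO(-D_{\Delta'}))$ has the character basis $\{\chi_m\}$ indexed by the lattice points of a tangent-cone region at the face $Q_\tau$ of $\Delta$; the computation underlying \eqref{equ-Euler-char-O(-D')} identifies this region (after the Ehrhart-style reflection $m \mapsto -m$ appearing there, cf.\ \eqref{equ-Ehrhart-recip}) with $T^-_{\Delta,\tau} \cap M$. The trace of $\Psi$ on each factor $H^0(U_{\sigma_I}, \cO(-D_{\Delta'}))$ is then the sum of its diagonal entries $\sum_m K_{\sigma_I}(m) \, {\mathbf 1}_{T^-_{\Delta,\sigma_I}}(m)$. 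Because $\Psi$ is a morphism of sheaves, the restriction maps $\cO(-D_{\Delta'})(U_\sigma) \hookrightarrow \cO(-D_{\Delta'})(U_\tau)$ for $\tau \preceq \sigma$ are $\Psi$-equivariant; in particular $K_\sigma(m) = K_\tau(m)$ whenever $\chi_m$ lies in both spaces, so the scalar $K_\sigma(m)$ depends only on the $T$-weight $m$ (and on the subfan on which $\chi_m$ makes sense) and can be pulled outside the combinatorial count.

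The final step is to interchange the sum over $m$ with the \v{C}ech sum and recognize the resulting signed count as the same identity that yields \eqref{equ-equi-Euler-char-O(D)}: for each fixed $m$, the alternating sum over ordered tuples of maximal cones of ${\mathbf 1}_{T^-_{\Delta, \sigma_0 \cap \dots \cap \sigma_p}}(m)$ is a \v{C}ech repackaging of the Brianchon--Gram decomposition (Theorem~\ref{th-BG-alt}) applied to the subfan $\Sigma_m = \{\sigma \in \Sigma : m \in T^-_{\Delta,\sigma}\}$, and it collapses to $\sum_{\sigma \in \Sigma} (-1)^{\dim \sigma} {\mathbf 1}_{T^-_{\Delta,\sigma}}(m)$. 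Reinstating the scalar $K_\sigma(m)$ gives $\Lambda(\Psi, \cO(-D_{\Delta'})) = S_\Sigma(\Delta, M)$. The main obstacle is this last, purely combinatorial, step: one must verify carefully that the \v{C}ech-to-fan rearrangement (over arbitrary tuples of maximal cones, with the standard \v{C}ech signs) really does reproduce the Brianchon--Gram signs $(-1)^{\dim \sigma}$, and that the Ehrhart reflection relating the support condition for $\cO(-D_{\Delta'})$ to membership in $T^-_{\Delta,\sigma}$ is applied consistently throughout; once in place, the argument is essentially the traced version of the Euler characteristic computation in Section~\ref{subsec-equiv-Euler-char}.
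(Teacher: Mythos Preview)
Your proposal is correct and is essentially the route the paper takes: the paper does not give a detailed proof but simply says the proposition follows from the \v{C}ech computation of the equivariant Euler characteristic in Section~\ref{subsec-equiv-Euler-char}, in particular \eqref{equ-Euler-char-O(-D')}, which is exactly the Hopf-trace-formula/weight-decomposition argument you outline. Your caveats about the sign and reflection bookkeeping are apt, and your observation that sheaf compatibility forces $K_\sigma(m)=K_\tau(m)$ whenever both are defined is a detail the paper leaves implicit.
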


\begin{remark}
The reason for appearance of the polytope $\Delta'$ instead of $\Delta$ is that we defined the outward tangent cones 
$T^-_{\Delta, \sigma}$ using strict inequalities. If we change the convention and use non-strict inequalities in the definition of 
$T^-_{\Delta, \sigma}$, then Proposition \ref{equ-Euler-char-O(-D')} holds with $D$ in place of $D'$.  
\end{remark}

Finally, as a side remark we also mention an example of a presheaf that is reminiscent of Arthur's construction of the kernels 
$K_P$ (see \cite[Section 4]{arthur-clay}).

\begin{example}[A sheaf of $W$-invariant sections on the toric variety of Weyl fan]
Suppose $\Sigma$ is the Weyl fan and hence the Weyl group acts on $\Sigma$.  
Note that by definition $W$ acts on the character lattice $M$. 
For $\sigma \in \Sigma$ let $W_\sigma$ be the $W$-stabilizer of $\sigma$. 
Let $\mathcal{O}(\Delta)$ be the invertible sheaf associated to a 
$W$-invariant polytope $\Delta$. We define the sheaf $\mathcal{O}(\Delta)^W$ by 
$$H^0(U_\sigma, \mathcal{O}(\Delta)^W) := H^0(U_\sigma, \mathcal{O}(\Delta))^{W_\sigma}, \quad \forall \sigma \in \Sigma.$$
Let $\tau \subset \sigma$ be cones in $\Sigma$. Note that $W_\sigma \subset W_\tau$ and hence if 
$f \in H^0(U_\sigma, \mathcal{O}(\Delta))^{W_\sigma}$ then, in general, $f_{|U_\tau}$ may not be $W_\tau$-invariant and 
hence may not lie in $H^0(U_\tau, \mathcal{O}(\Delta))^{W_\tau}$. We remedy this by defining the restriction map 
$i_{\sigma\tau}: H^0(U_\sigma, \mathcal{O}(\Delta)^W) \to H^0(U_\tau, \mathcal{O}(\Delta)^W)$ by: 
$$i_{\sigma\tau}(f) = \sum_{w \in W_\tau/W_\sigma} (w \cdot f)_{|U_\tau}.$$  
Let us verify that the above restriction maps $i_{\sigma \tau}$ give a well-defined pre-sheaf on $X_\Sigma$.  
Suppose we have cones $\gamma \subset \tau \subset \sigma$ in $\Sigma$ with corresponding affine charts 
$U_\gamma \subset U_\tau \subset U_\sigma$. We need to show $i_{\tau \gamma} \circ i_{\sigma \tau} = i_{\sigma \gamma}$.  
Let $f \in H^0(U_\sigma, \mathcal{O}(\Delta)^W)$. We have 
$$
i_{\tau \gamma}(i_{\sigma \tau}(f)) = \sum_{w \in W_\gamma / W_\tau} \sum_{w' \in W_\tau / W_\sigma} (ww') \cdot f.  
$$
As $w$ (respectively, $w'$) runs over a set of representatives for $W_\gamma / W_\tau$ (respectively, $W_\tau / W_\sigma$), 
the product $ww'$ runs over a set of representatives for $W_\gamma / W_\sigma$. This proves the claim.
\end{example}

It is interesting to compute the Euler characteristic and \v{C}ech cohomologies of the above presheaf.

\end{document}